\numberwithin{equation}{section}
\newtheorem{theorem}[equation]{Theorem}
\newtheorem{lemma}[equation]{Lemma}
\newtheorem{proposition}[equation]{Proposition}
\newtheorem{corollary}[equation]{Corollary}
\newtheorem*{namedthm}{Theorem \namedthmname}
\newcounter{namedthm}
\newenvironment{namedtheorem}[1]
  {\def\namedthmname{#1}%
   \refstepcounter{namedthm}%
   \namedthm\def\@currentlabel{#1}}
  {\endnamedthm}
\theoremstyle{definition}
\newtheorem{definition}[equation]{Definition}
\newtheorem{example}[equation]{Example}
\newtheorem{notation}[equation]{Notation}
\theoremstyle{remark}
\newtheorem{remark}[equation]{Remark}
\newtheorem{remarks}[equation]{Remarks}
\renewcommand{\phi}{\varphi}
\DeclareMathSymbol{\boxprod}{\mathbin}{AMSa}{"03} 
\DeclareMathSymbol{\mixprod}{\mathbin}{AMSa}{"4F} 
\newcommand{\dirsum}{\oplus}
\newcommand{\Dirsum}{\bigoplus}
\newcommand{\disjunion}{\sqcup}
\newcommand{\Disjunion}{\coprod}
\newcommand{\dual}{^\vee}
\newcommand{\hmtpc}{\simeq}
\newcommand{\homeo}{\approx}
\newcommand{\id}{\mathrm{id}}
\newcommand{\iso}{\cong}
\newcommand{\smsh}{\wedge}
\newcommand{\Susp}{\Sigma}
\newcommand{\susp}{\Sigma}
\newcommand{\tensor}{\otimes}
\newcommand{\C}{{\mathbb C}}
\newcommand{\PP}{\mathbb{P}}
\newcommand{\R}{{\mathbb R}}
\newcommand{\Z}{\mathbb{Z}}
\newcommand{\rels}[1]{\left\langle #1 \right\rangle}
\DeclareMathOperator{\Hom}{Hom}
\DeclareMathOperator{\im}{im}
\DeclareMathOperator{\grad}{grad}
\newcommand{\tensorS}{\tensor_{\HS}}
\newcommand{\HS}{{\mathbb H}}
\newcommand{\cw}{c_{\omega}}
\newcommand{\cxw}{c_{\chiw}}
\newcommand{\cwd}[1][]{\widehat{c}_\omega^{\ifthenelse{\equal{#1}{}}{}{{\:#1}}}}
\newcommand{\cxwd}[1][]{\widehat{c}_{\chiw}^{\ifthenelse{\equal{#1}{}}{}{{\:#1}}}}
\newcommand{\cl}{c_{\lambda}}
\newcommand{\cxl}{c_{\chi\lambda}}
\newcommand{\cld}{\widehat{c}_{\lambda}}
\newcommand{\cxld}{\widehat{c}_{\chi\lambda}}
\newcommand{\clo}{c_{\omega_1}}
\newcommand{\cxlo}{c_{\chi\omega_1}}
\newcommand{\clt}{c_{\omega_2}}
\newcommand{\cxlt}{c_{\chi\omega_2}}
\newcommand{\cltens}{c_{\omega_1\tensor\omega_2}}
\newcommand{\cxltens}{c_{\chi\omega_1\tensor\omega_2}}
\newcommand{\clod}[1][]{\widehat{c}_{\omega_1}^{\ifthenelse{\equal{#1}{}}{}{{\:#1}}}}
\newcommand{\cxlod}[1][]{\widehat{c}_{\chi\omega_1}^{\ifthenelse{\equal{#1}{}}{}{{\:#1}}}}
\newcommand{\cltd}[1][]{\widehat{c}_{\omega_2}^{\ifthenelse{\equal{#1}{}}{}{{\:#1}}}}
\newcommand{\cxltd}[1][]{\widehat{c}_{\chi\omega_2}^{\ifthenelse{\equal{#1}{}}{}{{\:#1}}}}
\newcommand{\cltensd}[1][]{\widehat{c}_{\omega_1\tensor\omega_2}^{\ifthenelse{\equal{#1}{}}{}{{\:#1}}}}
\newcommand{\cxltensd}[1][]{\widehat{c}_{\chi\omega_1\tensor\omega_2}^{\ifthenelse{\equal{#1}{}}{}{{\:#1}}}}
\newcommand{\cd}[1][]{\widehat{c}^{\ifthenelse{\equal{#1}{}}{}{{\:#1}}}}
\newcommand{\xd}[1][]{\widehat{x}^{\ifthenelse{\equal{#1}{}}{}{{\:#1}}}}
\newcommand{\yd}[1][]{\widehat{y}^{\ifthenelse{\equal{#1}{}}{}{{\:#1}}}}
\newcommand{\epsp}{\epsilon_\oplus} 
\newcommand{\Cpq}[2]{\C^{#1+#2\sigma}}
\newcommand{\Cq}[1]{\C^{#1\sigma}}
\newcommand{\Xpq}[2]{\PP(\Cpq{#1}{#2})}
\newcommand{\Xp}[1]{\PP(\C^{#1})}
\newcommand{\Xq}[1]{\PP(\Cq{#1})}
\newcommand{\chiw}{\chi\omega}
\newcommand{\ROev}{RO_{\mathrm{e}}}
\newcommand{\gr}{\Diamond}      
\newcommand{\ext}{\mathsf{\Lambda}}     
\newcommand{\GG}{{C_2}}
\begin{document}

\title{The $\GG$-equivariant ordinary cohomology of $BT^2$}

\author{Steven R. Costenoble}
\address{Steven R. Costenoble\\Department of Mathematics\\Hofstra University\\
  Hempstead, NY 11549, USA}
\email{Steven.R.Costenoble@Hofstra.edu}
\author{Thomas Hudson}
\address{Thomas Hudson, College of Transdisciplinary Studies, DGIST, 
Daegu, 42988, Republic of Korea}
\email{hudson@dgist.ac.kr}

\keywords{Equivariant cohomology, equivariant characteristic classes, product of projective spaces, projective bundle formula}
\subjclass[2020]{Primary 55N91;
Secondary 14M15, 14N15, 55R12, 55R40, 55R91, 57R20}
%
%
\abstract
We calculate the ordinary $\GG$-cohomology of $BT^2$ with Burnside ring coefficients, 
using an extended grading that allows us to capture a more natural set of generators.
We discuss how this cohomology is related to those of $BT^1$ and $BU(2)$,
calculated previously,
both relationships being more complicated than in the nonequivariant case.
\endabstract
\maketitle
\tableofcontents

\section{Introduction}

This paper is a companion and followup to \cite{CH:bu2}. In that paper, we computed the 
$\GG$-equivariant ordinary cohomology of
$BU(2)$, the classifying space for $\GG$-equivariant complex 2-plane bundles,
using a cohomology theory with extended grading that allows us to capture an appropriate
set of characteristic classes, including Euler classes for all vector bundles.
Nonequivariantly, the cohomology of $BU(2)$ is often calculated by showing
that it is the symmetric part of the cohomology of $BT^2$.
Equivariantly, as we shall see, the relationship between these two cohomology rings
is not as straightforward.
Our first task is to compute the cohomology of $BT^2$ so that we can compare it to the
result of \cite{CH:bu2}. Calculations related to the one we do here have been done in the $RO(G)$-graded theory,
notably those by Lewis \cite{LewisCP}; more recent examples include work by Dugger
\cite{DuggerGrass}, Kronholm \cite{KronholmSerre}, Hogle \cite{Hogle} and Hazel \cite{HazelFundamental,HazelSurfaces}.

To set notations and for comparison purposes, recall the following nonequivariant results.
Thinking of $BT^2 = BT^1\times BT^1$, let $\omega_1$ and $\omega_2$ be the tautological
complex line bundles coming from the two factors of $BT^1$. We then have that
\begin{equation}\label{eqn:nonequiv bt2}
    H^*(BT^2_+;\Z) \iso H^*(BT^1_+;\Z)\tensor H^*(BT^1_+;\Z) \iso \Z[x_1,x_2]\,,
\end{equation}
by the K\"unneth theorem, where $x_1$ is the Euler class of $\omega_1$ and $x_2$ is the Euler class of $\omega_2$.
Moreover, we can relate this ring to the cohomology of $BU(2)$ by means of the well-known result
\begin{equation}\label{eqn:symmetry}
  H^*(BT^2_+;\Z)^{\Sigma_2}\iso
  H^*(BU(2)_+;\Z)  
\end{equation}
while the projective bundle formula gives
\begin{equation}\label{eqn:projective}
H^*(BT^2_+;\Z)\iso H^*(BU(2)_+;\Z)[x_1]/\rels{x_1^2-c_1x_1+c_2},
\end{equation}
where $c_1$ and $c_2$ are the Chern classes of the rank 2 tautological bundle $\omega$ over $BU(2)$. In particular, as an $H^*(BU(2)_+;\Z)$-module, $H^*(BT^2_+;\Z)$ is finitely generated and free of rank 2. One goal of this paper is to investigate the extent to which these relationships hold in the equivariant setting. Equations (\ref{eqn:nonequiv bt2}) and (\ref{eqn:symmetry}) will hold under some restrictions while, for (\ref{eqn:projective}), 
we will see that the equivariant cohomology of $BT^2$ is finitely generated over that of $BU(2)$ but no longer free.

We will assume the reader is familiar with the ordinary cohomology with extended grading we use,
from the summary given in \cite{CH:bu2}, the guide \cite{Beaudry:Guide}, or the exposition \cite{CostenobleWanerBook}.
As a model of equivariant $BT^2$ we shall take
\[
    BT^2 = BT^1\times BT^1 = \Xpq\infty\infty \times \Xpq\infty\infty,
\]
the product of two copies of an infinite projective space. 
Here, $\C$ is the trivial complex representation of $\GG$ and $\Cq{}$ is the sign representation.

The cohomology of $BT^1$
was computed in \cite{Co:InfinitePublished}.
This is a ring graded on
\[
    RO(\Pi BT^1) \iso \Z\{1,\sigma,\Omega_0,\Omega_1\} / \rels{\Omega_0 + \Omega_1 = 2\sigma - 2},
\]
the representation ring of the fundamental groupoid of $BT^1$,
where $1$ is the class of the constant representation $\R$ while $\sigma$ is the class
of the sign representation $\R^\sigma$.
For any complex vector bundle $\omega$, we write $\chi\omega = \omega\tensor\Cq{}$.
Then the main result of \cite{Co:InfinitePublished} was the following.
As in \cite{CH:bu2}, all cohomologies are taken with Burnside ring coefficients and
we write $\HS$ for $H_\GG^{RO(\GG)}(S^0)$, the $RO(\GG)$-graded cohomology of a point.

\begin{theorem}[{\cite[Theorem~11.3]{Co:InfinitePublished}}]\label{thm:bt1}
As a module, $H_\GG^{RO(\Pi BT^1)}(BT^1_+)$ is free over $\HS$, and as a commutative algebra
we have
\[
    H_\GG^{RO(\Pi BT^1)}(BT^1_+) \iso \HS[\zeta_0,\zeta_1,\cw,\cxw]/I
\]
where $I$ is the ideal generated by the relations
\begin{align*}
    \zeta_0\zeta_1 &= \xi \\
    \mathllap{\text{and}\quad}
    \zeta_1\cxw &= (1-\kappa)\zeta_0\cw + e^2.
\end{align*}
\qed
\end{theorem}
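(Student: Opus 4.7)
The plan is to build $BT^1 = \Xpq{\infty}{\infty}$ as the colimit of the finite equivariant projective spaces $\Xpq{p}{q}$, compute the cohomology of each finite level by induction on $p+q$, and then pass to the colimit.

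First I would exploit the linear inclusions
\[
\Xpq{p-1}{q} \includesin \Xpq{p}{q} \quad \text{and} \quad \Xpq{p}{q-1} \includesin \Xpq{p}{q},
\]
each of which is a cofibration whose cofiber is equivariantly a Thom space of a line bundle built from the tautological bundle by tensoring with a trivial or a sign representation. The resulting Thom isomorphisms identify multiplication by $\cw$ and by $\cxw$ with the connecting maps in the associated long exact sequences, producing an inductive recipe for the cohomology and simultaneously exhibiting $\cw$ and $\cxw$ as the equivariant Euler classes of $\omega$ and of $\chi\omega$.

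Next I would pin down the two additional generators $\zeta_0$ and $\zeta_1$, which arise because the equivariant cell structure on $\Xpq{p}{q}$ contains cells that are not of pure complex type; their degrees in $RO(\Pi BT^1)$ involve $\Omega_0$ and $\Omega_1$. Once the generators are in hand, the two relations must be verified. The equality $\zeta_0\zeta_1 = \xi$ should fall out of a direct geometric comparison between two different descriptions of the same class in low degree, while $\zeta_1\cxw = (1-\kappa)\zeta_0\cw + e^2$ encodes the discrepancy between the two Euler classes $\cw$ and $\cxw$, which are related but not equal in the extended grading.

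The main obstacle is establishing this second relation with the correct coefficients $(1-\kappa)$ and $e^2$. I would verify it on a small projective space (for instance $\Xpq{1}{1}$, where all four classes live in dimensions small enough for an explicit cell-by-cell calculation) and then transport the identity to $BT^1$ via naturality along the universal inclusion. The nonequivariant restriction and the geometric fixed-point map provide the sanity checks that pin down the correction terms uniquely. With both relations confirmed, freeness over $\HS$ at each finite level follows by induction from the Thom isomorphism, and the colimit is harmless since in each extended degree only finitely many of the $\Xpq{p}{q}$ contribute.
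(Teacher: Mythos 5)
First, note that the paper does not prove this statement at all: Theorem~\ref{thm:bt1} is recalled verbatim from \cite[Theorem~11.3]{Co:InfinitePublished} and the \qed\ signals that the proof lives entirely in that reference. So there is no in-paper argument to match your sketch against; the honest comparison is with the strategy of the cited computation, which (judging from how the present paper handles the analogous Theorem~A for $BT^2$) proceeds by writing down the candidate presentation, producing an explicit $\HS$-basis for it via Bergman's diamond lemma, and then showing that this basis maps to a basis of the actual cohomology by checking its images under the restriction to nonequivariant cohomology and the restriction to the fixed set.

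Your proposal has a genuine gap at exactly the point that machinery is designed to handle. You verify that the two relations \emph{hold}, but you never address why they generate \emph{all} relations, i.e.\ why the map $\HS[\zeta_0,\zeta_1,\cw,\cxw]/I \to H_\GG^{RO(\Pi BT^1)}(BT^1_+)$ is injective and surjective. Freeness of each finite stage over $\HS$ (even granting your inductive Thom-isomorphism argument) does not identify the ring with the stated quotient; one needs a basis of the presented algebra, which is what the diamond-lemma step supplies, together with a criterion such as \cite[Proposition~6.1]{CH:bu2} to promote ``basis after restriction to $\rho$ and to $(-)^\GG$'' to ``basis.'' A second, smaller problem: verifying the relation $\zeta_1\cxw = (1-\kappa)\zeta_0\cw + e^2$ on $\Xpq{1}{1}$ and ``transporting it via naturality'' runs the wrong way --- naturality along $\Xpq{1}{1}\includesin BT^1$ only shows the relation holds after restriction, which proves nothing about $BT^1$ unless that restriction is injective in the relevant grading. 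The correct mechanism, which the present paper uses for $BT^2$ in Corollary~\ref{cor:relations}, is injectivity of the fixed-point restriction $\eta$ in even gradings; you mention the fixed-point map only as a ``sanity check,'' but it is in fact the proof.
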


Here, $\cw$ is the Euler class of $\omega$, the tautological bundle over $BT^1$,
and $\cxw$ is the Euler class of $\chi\omega$. The elements $\zeta_0$ and $\zeta_1$
are related to the component structure of the fixed-set subspace $(BT^1)^\GG = BT^1 \disjunion BT^1$,
while $\xi$, $\kappa$, and $e$ are elements of $\HS$.

In \S\ref{sec:restricted grading}, shall show that, in a certain restricted set of gradings, the cohomology of $BT^2$
is the tensor product of two copies of the cohomology of $BT^1$;
see Corollary~\ref{cor:kunneth bt2}. 
However, this is not true if we use the full grading possible, which is the representation ring $RO(\Pi BT^2)$.
A calculation analagous to that of $RO(\Pi BT^1)$ shows that
\[
    RO(\Pi BT^2) \iso \Z\{1,\sigma,\Omega_{00},\Omega_{01},\Omega_{10},\Omega_{11}\} /
    \rels{\textstyle\sum\Omega_{ij} = 2\sigma - 2}.
\]
In stating our main result, we shall write $H_\GG^\gr(BT^2_+)$ for $H_\GG^{RO(\Pi BT^2)}(BT^2_+)$
and, in general, we will use the superscript $\gr$ to denote grading on $RO(\Pi BT^2)$.
We will prove the following over \S\S\ref{sec:structure} through~\ref{sec:main result}.

\begin{namedtheorem}{A}\label{thm:A}
As a module, $H_\GG^\gr(BT^2_+)$ is free over $\HS$, and as a commutative algebra we have
\[
    H_\GG^\gr(BT^2_+) \iso 
    \HS[\zeta_{00},\zeta_{01},\zeta_{10},\zeta_{11},\clo,\cxlo,\clt,\cxlt,\cltens,\cxltens] / I
\]
where $I$ is the ideal generated by the relations
\begin{align*}
    \zeta_{00}\zeta_{01}\zeta_{10}\zeta_{11} &= \xi \\
    \zeta_{10}\zeta_{11}\cxlo &= (1-\kappa) \zeta_{00}\zeta_{01}\clo + e^2 \\
    \zeta_{01}\zeta_{11}\cxlt &= (1-\kappa) \zeta_{00}\zeta_{10}\clt + e^2 \\
    \zeta_{00} \cltens &= \zeta_{10}\cxlo + \zeta_{01}\cxlt
		- e^{-2}\kappa \zeta_{01}\zeta_{10}\zeta_{11}\cxlo\cxlt \\
    \zeta_{11} \cltens &= \zeta_{01}\clo + \zeta_{10}\clt
		- e^{-2}\kappa \zeta_{00}\zeta_{01}\zeta_{10}\clo\clt \\
    \zeta_{01} \cxltens &= \zeta_{11}\cxlo + \zeta_{00}\clt
		- e^{-2}\kappa \zeta_{00}\zeta_{10}\zeta_{11}\cxlo\clt \\
    \zeta_{10} \cxltens &= \zeta_{00}\clo + \zeta_{11}\cxlt
		- e^{-2}\kappa \zeta_{00}\zeta_{01}\zeta_{11}\clo\cxlt \\
  \mathllap{\text{and}\quad}
    \cltens\cxltens &= \clo\cxlo + \clt\cxlt
		+ \tau(\iota^{-2})\zeta_{00}^2\zeta_{01}\zeta_{10}\clo\clt.
\end{align*}
\qed
\end{namedtheorem}

Here, $\omega_1$ and $\omega_2$ are the two tautological bundles over $BT^2$
and each element $c_L$ is the Euler class of the line bundle $L$.
As in Theorem~\ref{thm:bt1}, the $\zeta_{ij}$ correspond to the four
components of $(BT^2)^\GG$, while $\xi$, $\kappa$, $e$, $e^{-2}\kappa$, and $\tau(\iota^{-2})$
are elements of $\HS$.
Although this description looks considerably more complicated than (\ref{eqn:nonequiv bt2}),
it reduces to it: Each $\zeta_{ij}$ reduces to 1 nonequivariantly, $\clo$ and $\cxlo$ reduce to $x_1$,
$\clt$ and $\cxlt$ reduce to $x_2$, and $\cltens$ and $\cxltens$ reduce to $x_1 + x_2$.
The relations reduce to trivialities due to these reductions and the fact that
$\xi$ reduces to 1, $\tau(\iota^{-2})$ reduces to 2, and $\kappa$, $e$, and $e^{-2}\kappa$ all reduce to 0.

To help understand the structure shown in Theorem~\ref{thm:A}, in \S\ref{sec:bases} we give
examples of $\HS$-bases for several $RO(\GG)$ ``pages'' of the cohomology.
In \S\ref{sec:units}, we determine the units of the cohomology ring and use them
to calculate the Euler classes of dual bundles.

One of the most striking features of this calculation is the necessity of using the Euler class
$\cltens$ as one of the generators. Nonequivariantly, $e(\omega_1\tensor\omega_2) = e(\omega_1) + e(\omega_2)$,
which is the calculation underlying the fact that ordinary cohomology supports the additive formal group law.
Equivariantly, although there are relations involving $\cltens$, $\clo$, and $\clt$, we cannot
write $\cltens$ simply in terms of the other two.
Equivariant ordinary cohomology does not support a formal group law---we discuss this more
in \S\ref{sec:notes} along with some ideas that show that the set of relations above is
redundant if we take into account various self-maps of $BT^2$.

In \S\ref{sec:euler}, we show that, using $\cltens$, we can determine the Euler classes of
various other tensor products of tensor powers of line bundles.


We said that one of the reasons for doing this calculation was so that we could compare
the cohomologies of $BT^2$ and $BU(2)$ to see how (\ref{eqn:symmetry}) and (\ref{eqn:projective}) generalize.
We again have the map $s\colon BT^2\to BU(2)$ that classifies $\omega_1 \dirsum\omega_2$, so,
if $\omega$ denotes the tautological 2-plane bundle over $BU(2)$, $s^*\omega = \omega_1\dirsum\omega_2$.
One of the reasons we need to use the full $RO(\Pi BT^2)$ grading
is that $\omega_1\tensor\omega_2$ is the pullback of $\lambda = \ext^2\omega$,
and one of the generators of the cohomology of $BU(2)$ is $\cl$, the Euler class of $\lambda$.
Therefore, we need to use a set of gradings on $BT^2$ that allows us to have the element
$\cltens = s^*\cl$.
This means that the set of gradings used in \S\ref{sec:restricted grading},
in which the cohomology is the tensor product of the cohomologies
of the factors $BT^1$, is insufficient.

We go into detail about the relation between the cohomologies of $BT^2$ and $BU(2)$
in \S\S\ref{sec:projective} and~\ref{sec:pushforward}.
In general, the induced map $s^*$ from the cohomology of $BU(2)$ to the cohomology of $BT^2$ is
not even injective. However, it is injective when we restrict to \emph{even} gradings.
When we restrict to the even gradings, we get the familiar result that the cohomology of $BU(2)$ is exactly
the symmetric part of the cohomology of $BT^2$; this is Proposition~\ref{prop:symmetric}.
In \S\ref{sec:pushforward}, we explore the pushforward or transfer map from the cohomology
of $BT^2$ to the cohomology of $BU(2)$, which may be useful for further computations.

Nonequivariantly, we have the Whitney sum formula, which
is a powerful computational tool. Equivariantly, we don't have such a formula in general,
but in \S\ref{sec:waner}, we show that, for a set of classes suggested by Stefan Waner in an unpublished manuscript,
we do have a sum formula. Unfortunately, these classes
turn out to include only some of the most interesting ones.

\subsection*{Acknowledgements} 
Both authors would like to thank Sean Tilson for his help
in laying the foundations of this work. The first author thanks Hofstra University for released time to work on this project.
The second author was partially supported by the National Research Foundation of Korea (NRF) grant funded by the Korea government (MSIT) (No. RS-2024-00414849).

\section{A preliminary calculation}\label{sec:restricted grading}

Nonequivariantly, the K\"unneth theorem tells us that the cohomology of $BT^2$ is the tensor product of
two copies of the cohomology of $BT^1$. 
As we shall see, this is not true in the extended grading we use equivariantly,
but it is true in a restricted grading, and we describe this preliminary result before moving on
to the full grading.

\begin{definition}
Let $X$ and $Y$ be $\GG$-spaces and let $\pi_1\colon X\times Y\to X$ and $\pi_2\colon X\times Y\to Y$
be the projections. We define
\[
    SRO(\Pi X\times Y) = \im[\pi_1^* + \pi_2^*\colon RO(\Pi X)\dirsum RO(\Pi Y) \to RO(\Pi X\times Y)].
\]
\end{definition}

That is, $SRO(\Pi X\times Y)$ is the subgroup of $RO(\Pi X\times Y)$ consisting of those representations
that can be written as sums of representations of $\Pi X$ and $\Pi Y$, and
in general this is not all of $RO(\Pi X\times Y)$.
For example, the map $\pi_1^*\colon RO(\Pi BT^1)\to RO(\Pi BT^2)$ has
\[
    \pi_1^*(\Omega_0) = \Omega_{00} + \Omega_{01} \quad\text{and}\quad
    \pi_1^*(\Omega_1) = \Omega_{10} + \Omega_{11},
\]
while $\pi_2^*$ has
\[
    \pi_2^*(\Omega_0) = \Omega_{00} + \Omega_{10} \quad\text{and}\quad
    \pi_2^*(\Omega_1) = \Omega_{01} + \Omega_{11}.
\]
From this we can characterize the gradings in $SRO(\Pi BT^2)$ as
\[
    SRO(\Pi BT^2) = \left\{ a+b\sigma + {\textstyle \sum m_{ij}\Omega_{ij}} \in RO(\Pi BT^2)
        \mid m_{00} + m_{11} = m_{01} + m_{10} \right\},
\]
so that $RO(\Pi BT^2)/SRO(\Pi BT^2) \iso \Z$.
In particular, the representation $\omega_1\tensor\omega_2 = 2 + \Omega_{01} + \Omega_{10}$
is not in $SRO(\Pi BT^2)$.

\begin{lemma}
Assume that $X$ and $Y$ each have at least one $\GG$-fixed point. Then the addition map
\[
    RO(\Pi X)\dirsum_{RO(\GG)} RO(\Pi Y)\to SRO(\Pi X\times Y)
\]
is an isomorphism.
\end{lemma}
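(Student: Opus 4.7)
The plan is to present the pushout explicitly as
\[
RO(\Pi X)\oplus_{RO(\GG)} RO(\Pi Y) \iso \bigl(RO(\Pi X)\oplus RO(\Pi Y)\bigr)\big/\langle (p_X^*V, -p_Y^*V)\mid V\in RO(\GG)\rangle,
\]
where $p_X\colon X\to *$ and $p_Y\colon Y\to *$ are the projections, and then to verify that the obvious map $(\alpha,\beta)\mapsto \pi_1^*\alpha+\pi_2^*\beta$ descends to an isomorphism. Well-definedness on the pushout is routine from the identity $\pi_1^*p_X^* = p^* = \pi_2^*p_Y^*$ with $p\colon X\times Y\to *$, and surjectivity onto $SRO(\Pi X\times Y)$ is just the definition of the latter. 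So the entire content is injectivity.

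For injectivity I would exploit the fixed-point hypothesis by choosing $x_0\in X^{\GG}$ and $y_0\in Y^{\GG}$ and forming the inclusions $j_X := (\mathrm{id}_X, e_{y_0})\colon X\hookrightarrow X\times Y$ and $j_Y := (e_{x_0}, \mathrm{id}_Y)\colon Y\hookrightarrow X\times Y$, where $e_{x_0}$ and $e_{y_0}$ are the basepoint inclusions. These satisfy $\pi_1\circ j_X = \mathrm{id}_X$ and $\pi_2\circ j_X = e_{y_0}\circ p_X$, together with symmetric identities for $j_Y$. Given $(\alpha,\beta)$ with $\pi_1^*\alpha+\pi_2^*\beta=0$ in $RO(\Pi X\times Y)$, applying $j_X^*$ forces $\alpha = -p_X^* e_{y_0}^*\beta$, exhibiting $\alpha$ as $p_X^*V$ for $V := -e_{y_0}^*\beta\in RO(\GG)$; applying $j_Y^*$ symmetrically writes $\beta = p_Y^*W$ for some $W\in RO(\GG)$.

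To finish, I would identify $W$ with $-V$ using the retraction $e_{x_0}^*\circ p_X^* = \mathrm{id}_{RO(\GG)}$ that follows from $p_X\circ e_{x_0} = \mathrm{id}_*$: indeed, $W = -e_{x_0}^*\alpha = -e_{x_0}^*p_X^*V = -V$. Therefore $(\alpha,\beta) = (p_X^*V, -p_Y^*V)$, which is one of the relations imposed on the pushout, so its class is zero. The argument is essentially formal and I do not anticipate a genuine obstacle; the fixed-point hypothesis plays exactly the role of supplying the retractions $e_{x_0}\circ p_X = \mathrm{id}_*$ and $e_{y_0}\circ p_Y = \mathrm{id}_*$ needed to recover $\alpha$ and $\beta$ from their restrictions along the two fibers through the chosen basepoints.
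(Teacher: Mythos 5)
Your proof is correct and follows essentially the same route as the paper: both arguments reduce injectivity to restricting along the slices through the chosen fixed points $x_0$ and $y_0$, which detects that any ambiguity in the decomposition $\gamma = \pi_1^*\alpha + \pi_2^*\beta$ lies in $RO(\GG)$. Your write-up is somewhat more explicit about the pushout presentation and the final identification $W = -V$, but the mathematical content is the same.
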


\begin{proof}
The map is an epimorphism by definition. Suppose that $\alpha_1, \alpha_2\in RO(\Pi X)$,
$\beta_1,\beta_2\in RO(\Pi Y)$, and $\alpha_1+\beta_1 = \alpha_2 + \beta_2 \in RO(\Pi X\times Y)$.
Then $\alpha_1 - \alpha_2 = \beta_1 - \beta_2$, so,
restricting to a fixed point in $Y$, we see that $\alpha_1 - \alpha_2$ must be constant on $\Pi X$,
that is, must be an element of $RO(\GG)$ (using that $X$ has a fixed point), and similarly for $\beta_1 - \beta_2$.
This shows the monomorphism needed to complete the lemma.
\end{proof}

This lemma can also be seen directly for $SRO(\Pi BT^2)$ from our characterization above.

\begin{proposition}\label{prop:kunneth}
Suppose that $H_\GG^{RO(\Pi X)}(X_+)$ is a free $\HS$-module and that $X$ and $Y$ each have a $\GG$-fixed point. Then
\[
    H_\GG^{SRO(\Pi X\times Y)}((X\times Y)_+) \iso H_\GG^{RO(\Pi X)}(X_+)\tensorS H_\GG^{RO(\Pi Y)}(Y_+).
\]
\end{proposition}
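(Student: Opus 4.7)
The plan is to build a natural external-product map and then show, using the freeness hypothesis and induction over a $\GG$-CW decomposition of $Y$, that it is an isomorphism. Define
\[
\mu\colon H_\GG^{RO(\Pi X)}(X_+) \tensorS H_\GG^{RO(\Pi Y)}(Y_+) \to H_\GG^{SRO(\Pi X\times Y)}((X\times Y)_+)
\]
by $\mu(\alpha \tensor \beta) = \pi_1^*(\alpha)\cdot\pi_2^*(\beta)$. I would first check that this is well defined on the $\HS$-balanced tensor product (the $\HS$-action factors through both projections), and lands in the restricted grading because $\pi_1^*$ and $\pi_2^*$ both factor through $SRO(\Pi X\times Y)$ by the preceding lemma.

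Next I would exploit the freeness hypothesis. Choose an $\HS$-basis $\{e_i\}$ for $H_\GG^{RO(\Pi X)}(X_+)$ with $e_i \in H_\GG^{\alpha_i}(X_+)$ for some $\alpha_i\in RO(\Pi X)$. Then the left-hand side simplifies to the direct sum
\[
\Dirsum_i H_\GG^{RO(\Pi Y)}(Y_+)\,[\alpha_i],
\]
on which $\mu$ acts by $(\beta_i)\mapsto \sum_i \pi_1^*(e_i)\cdot \pi_2^*(\beta_i)$. Viewed as a functor of $Y$, the left-hand side is then manifestly a cohomology theory on based $\GG$-spaces (a sum of shifts of $H_\GG^*(Y_+)$), while the right-hand side is the cohomology of $X_+\smsh Y_+$ and so is also a cohomology theory in $Y$; the map $\mu_Y$ is natural in $Y$ and respects cofiber sequences.

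The remaining work is to verify that $\mu_Y$ is an isomorphism. Start from the case $Y = \text{pt}$, where $SRO(\Pi X\times\text{pt}) = RO(\Pi X)$ and both sides reduce to $H_\GG^{RO(\Pi X)}(X_+)$, with $\mu$ the identity. For general $Y$ (with a fixed point), build $Y$ from its fixed point by attaching cells of the form $\GG/H_+\smsh D^V$ and pass to a colimit; the usual Mayer--Vietoris argument and the five-lemma propagate the isomorphism through the filtration, using that both sides send cofiber sequences in $Y$ to long exact sequences.

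The main obstacle I expect is bookkeeping of gradings: the grading group $RO(\Pi Y)$ depends on $Y$, so when $Y$ is built up inductively, the classes on the left side are only defined in gradings that extend to the ambient $Y$. The way around this is to note that all representations entering $SRO(\Pi X\times Y)$ are pullbacks from $\Pi X$ and $\Pi Y$, and these restrict sensibly along the inclusions of subcomplexes, so one can work throughout in gradings pulled back from the final $Y$. Once this coherence is set up, the induction is formal, and the freeness of $H_\GG^{RO(\Pi X)}(X_+)$ over $\HS$ is exactly what guarantees that the left-hand side is a plain direct sum of shifts of $H_\GG^*(Y_+)$, with no higher $\mathrm{Tor}$ contributions to obstruct the induction.
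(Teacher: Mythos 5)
Your proposal is correct and follows the same basic strategy as the paper: build the external product map, use freeness of $H_\GG^{RO(\Pi X)}(X_+)$ over $\HS$ to make the algebraic side a cohomology theory, and compare the two theories. The difference is in how the comparison is organized. You induct over an ordinary $\GG$-CW structure on $Y$ starting from a fixed point, and you correctly identify the resulting difficulty -- that elements of $RO(\Pi Y)$ do not restrict to honest gradings on subcomplexes unless everything is kept coherent with the ambient $Y$ -- proposing to work throughout in gradings pulled back from the final $Y$. The paper resolves exactly this issue by a slightly different device: it fixes $\gamma = \alpha + \beta$ and views both sides as $RO(\GG)$-graded cohomology theories on ex-$\GG$-spaces $Z$ \emph{over} $Y$, so that the $\Pi Y$-grading is carried by the structure map to $Y$ rather than by the subspace itself; the coefficient check is then done when $Z$ is a $\beta$-sphere over $Y$ rather than when $Y$ is a point. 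The parametrized formulation buys you the grading coherence for free and packages the cell-by-cell induction into the standard comparison theorem for theories on ex-spaces, whereas your version makes the induction explicit at the cost of the bookkeeping you describe. Both arguments leave the same points implicit (e.g., the behavior of the possibly infinite direct sum of shifted theories on an infinite complex), so neither is more complete than the other on that score.
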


\begin{proof}
Fix $\gamma \in SRO(\Pi X\times Y)$. The lemma above shows that
$\gamma = \alpha+\beta$ for some $\alpha\in RO(\Pi X)$ and $\beta\in RO(\Pi Y)$,
determined up to an element of $RO(\GG)$.
Let $Z$ be an ex-$\GG$-space over $Y$.
We have the pairing
\[
    \mu\colon H_\GG^{\alpha+RO(\GG)}(X_+)\tensorS H_\GG^{\beta+RO(\GG)}(Z)
    \to H_\GG^{\alpha+\beta+RO(\GG)}(X_+\smsh Z).
\]
By the freeness of the cohomology of $X$, the left side is an $RO(\GG)$-graded cohomology theory on
spaces $Z$ over $Y$, while the right side is clearly one as well.
When $Z$ is a $\beta$-sphere over $Y$ (see \cite[Definition~3.1.1]{CostenobleWanerBook}),
$\mu$ is an isomorphism, hence it is one in general, in particular for $Z = Y_+$.
\end{proof}

Now we specialize to the spaces $BT^n$. With a slight abuse of notation, we define
\[
    SRO(\Pi BT^n) = RO(\Pi BT^1)\dirsum_{RO(\GG)} SRO(\Pi BT^{n-1})
\]
recursively, so elements of $SRO(\Pi BT^n)$ are ones that can be written as sums of representations
of the $n$ factors of $BT^1$.

\begin{corollary}\label{cor:kunneth bt2}
\[
    H_\GG^{SRO(\Pi BT^n)}(BT^n_+) \iso 
    \underbrace{H_\GG^{RO(\Pi BT^1)}(BT^1_+)\tensorS\cdots\tensorS H_\GG^{RO(\Pi BT^1)}(BT^1_+)}_{\text{$n$ factors}}.
\]
\end{corollary}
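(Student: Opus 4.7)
The plan is an induction on $n$. The base case $n=1$ is tautological. For the inductive step, assume that
\[
H_\GG^{SRO(\Pi BT^{n-1})}(BT^{n-1}_+) \iso H_\GG^{RO(\Pi BT^1)}(BT^1_+)\tensorS \cdots \tensorS H_\GG^{RO(\Pi BT^1)}(BT^1_+)
\]
with $n-1$ factors on the right; by Theorem~\ref{thm:bt1} this is a tensor product of free $\HS$-modules and hence itself free over $\HS$.

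Writing $BT^n = BT^1 \times BT^{n-1}$, I would then essentially replay the proof of Proposition~\ref{prop:kunneth} with $X = BT^1$ and $Y = BT^{n-1}$, but restricting the $Y$-grading to $SRO(\Pi BT^{n-1})$. By the recursive definition, any $\gamma \in SRO(\Pi BT^n) = RO(\Pi BT^1) \dirsum_{RO(\GG)} SRO(\Pi BT^{n-1})$ factors as $\gamma = \alpha + \beta$ with $\alpha \in RO(\Pi BT^1)$ and $\beta \in SRO(\Pi BT^{n-1})$, determined up to an $RO(\GG)$ shift. For an ex-$\GG$-space $Z$ over $BT^{n-1}$, cup product gives a pairing
\[
\mu\colon H_\GG^{\alpha + RO(\GG)}(BT^1_+)\tensorS H_\GG^{\beta + RO(\GG)}(Z) \to H_\GG^{\alpha+\beta+RO(\GG)}(BT^1_+\smsh Z).
\]
Freeness of $H_\GG^{RO(\Pi BT^1)}(BT^1_+)$ over $\HS$ from Theorem~\ref{thm:bt1} makes the left-hand side an $RO(\GG)$-graded cohomology theory in $Z$, and the right-hand side is visibly one as well. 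The map $\mu$ is an isomorphism for $\beta$-spheres over $BT^{n-1}$ by direct inspection, and hence by the usual comparison of cohomology theories it is an isomorphism for all $Z$, in particular for $Z = BT^{n-1}_+$. Summing over $\gamma$ yields
\[
H_\GG^{SRO(\Pi BT^n)}(BT^n_+) \iso H_\GG^{RO(\Pi BT^1)}(BT^1_+) \tensorS H_\GG^{SRO(\Pi BT^{n-1})}(BT^{n-1}_+),
\]
which combined with the inductive hypothesis gives the desired $n$-fold tensor product.

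The main issue is bookkeeping rather than substance: one must verify that the recursively defined $SRO(\Pi BT^n)$ matches the set of gradings of the form $\alpha + \beta$ with $\alpha \in RO(\Pi BT^1)$ and $\beta \in SRO(\Pi BT^{n-1})$, and track the $RO(\GG)$-shift ambiguity so that the tensor product on the right is unambiguously over $\HS$. Neither point requires new ideas beyond those already present in Proposition~\ref{prop:kunneth}; the whole argument is that proposition iterated in the first variable via the inductive hypothesis, with the freeness condition propagated at each step by the observation that a tensor product of free $\HS$-modules is free.
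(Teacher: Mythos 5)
Your proposal is correct and is essentially the paper's own argument: the paper proves the corollary by citing the freeness of $H_\GG^{RO(\Pi BT^1)}(BT^1_+)$ over $\HS$ and then inducting via Proposition~\ref{prop:kunneth} with $X = BT^1$ and $Y = BT^{n-1}$, exactly as you do. Your only addition is to spell out that the grading on the $BT^{n-1}$ factor must be restricted to $SRO(\Pi BT^{n-1})$ to match the recursive definition, which is the same bookkeeping the paper leaves implicit.
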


\begin{proof}
That the cohomology of $BT^1$ is free over $\HS$ was shown in \cite{Co:InfinitePublished}.
The result then follows by induction from Proposition~\ref{prop:kunneth}.
\end{proof}

The problem with this result is that there are interesting gradings outside of
$SRO(\Pi BT^n)$,
for example, the grading of the tensor product $\omega_1\tensor\omega_2$ over $BT^2$,
in which its Euler class lives, as mentioned above.
To calculate the cohomology of $BT^2$ in all gradings in $RO(\Pi BT^2)$ will require more work.

\section{The proposed structure}\label{sec:structure}

We shall eventually prove that the following is isomorphic to $ H_\GG^\gr(BT^2_+)$.

\begin{definition}\label{def:P}
Let $ P^\gr$ be the $RO(\Pi BT^2)$-graded commutative ring defined as the algebra over
$\HS$ generated by elements
\begin{itemize}
\item $\zeta_{00}$, $\zeta_{01}$, $\zeta_{10}$, $\zeta_{11}$, where $\grad \zeta_{ij} = \Omega_{ij}$,
\item $\clo$ and $\cxlo$, with $\grad \clo = 2 + \Omega_{10} + \Omega_{11}$ and
	$\grad \cxlo = 2 + \Omega_{00} + \Omega_{01}$,
\item $\clt$ and $\cxlt$, with $\grad \clt = 2 + \Omega_{01}+\Omega_{11}$ and
	$\grad \cxlt = 2 + \Omega_{00} + \Omega_{10}$,
\item $\cltens$ and $\cxltens$, with
	$\grad \cltens = 2 + \Omega_{01} + \Omega_{10}$ and
	$\grad \cxltens = 2 +\Omega_{00} + \Omega_{11}$,
\end{itemize}
subject to the relations
\begin{itemize}
\item $\prod_{ij} \zeta_{ij} = \xi$
\item $\zeta_{10}\zeta_{11}\cxlo = (1-\kappa) \zeta_{00}\zeta_{01}\clo + e^2$
\item $\zeta_{01}\zeta_{11}\cxlt = (1-\kappa) \zeta_{00}\zeta_{10}\clt + e^2$
\item $\zeta_{00} \cltens = \zeta_{10}\cxlo + \zeta_{01}\cxlt
		- e^{-2}\kappa \zeta_{01}\zeta_{10}\zeta_{11}\cxlo\cxlt$
\item $\zeta_{11} \cltens = \zeta_{01}\clo + \zeta_{10}\clt
		- e^{-2}\kappa \zeta_{00}\zeta_{01}\zeta_{10}\clo\clt$
\item $\zeta_{01} \cxltens = \zeta_{11}\cxlo + \zeta_{00}\clt
		- e^{-2}\kappa \zeta_{00}\zeta_{10}\zeta_{11}\cxlo\clt$
\item $\zeta_{10} \cxltens = \zeta_{00}\clo + \zeta_{11}\cxlt
		- e^{-2}\kappa \zeta_{00}\zeta_{01}\zeta_{11}\clo\cxlt$
\item $\cltens\cxltens =
		\clo\cxlo + \clt\cxlt
		+ \tau(\iota^{-2})\zeta_{00}^2\zeta_{01}\zeta_{10}\clo\clt$.
\end{itemize}
\end{definition}

\begin{notation}
For $\alpha = \sum_{ij} a_{ij}\Omega_{ij} \in RO(\Pi BT^2)$ with $a_{ij}\geq 0$, write
\[
 \zeta^{\alpha} = \prod_{i,j} \zeta_{ij}^{a_{ij}}.
\]
\end{notation}



For clarity, we write $BT^2 = U_1\times U_2$
to distinguish the two factors.
The ring $ H_\GG^{RO(\Pi U_1)}((U_1)_+)$ acts on $ P^\gr$
via the ring map defined on generators by
\begin{align*}
  \zeta_0 &\mapsto \zeta^{\chi\omega_1-2} \\
  \zeta_1 &\mapsto \zeta^{\omega_1-2} \\
  \cw &\mapsto \clo \\
  c_{\chi\omega} &\mapsto \cxlo.
\end{align*}
The relations in $ H_\GG^{RP(\Pi U_1)}((U_1)_+)$ from \cite{Co:InfinitePublished} map to relations above, so we do get a ring map.

A key result is the following.

\begin{proposition}\label{prop:freeness}
$ P^\gr$ is a free module over $ H_\GG^{RO(\Pi U_1)}((U_1)_+)$,
hence a free module over $ \HS$. As a module over $ H_\GG^{RO(\Pi U_1)}((U_1)_+)$,
it has a basis consisting of all the monomials in $\zeta_{ij}$, $\clt$, $\cxlt$,
$\cltens$, and $\cxltens$ that are {\em not} multiples of
any of
\begin{itemize}
\item $\zeta_{00}\zeta_{01}$,
\item $\zeta_{10}\zeta_{11}$,
\item $\zeta_{01}\zeta_{11} \cxlt$,
\item $\zeta_{00}^2 \clt$,
\item $\zeta_{10}^2 \clt$,
\item $\zeta_{01}^2 \cxlt$,
\item $\zeta_{11}^2 \cxlt$,
\item $\zeta_{00} \cltens$,
\item $\zeta_{11} \cltens$,
\item $\zeta_{01} \cxltens$,
\item $\zeta_{10} \cxltens$, or
\item $\cltens\cxltens$.
\end{itemize}
As a module over $\HS$ it has a basis consisting of all the monomials in
$\zeta_{ij}$, $\clo$, $\cxlo$, $\clt$, $\cxlt$,
$\cltens$, and $\cxltens$
that are {\em not} multiples of any of
\begin{itemize}
\item $\zeta_{00}\zeta_{01}\zeta_{10}\zeta_{11}$,
\item $\zeta_{10}\zeta_{11} \cxlo$,
\item $\zeta_{01}\zeta_{11} \cxlt$,
\item $\zeta_{00}^2\zeta_{01}^2 \clo$,
\item $\zeta_{00}^2 \clt$,
\item $\zeta_{10}^2 \clt$,
\item $\zeta_{01}^2 \cxlt$,
\item $\zeta_{11}^2 \cxlt$,
\item $\zeta_{00} \cltens$,
\item $\zeta_{11} \cltens$,
\item $\zeta_{01} \cxltens$,
\item $\zeta_{10} \cxltens$, or
\item $\cltens\cxltens$.
\end{itemize}
\end{proposition}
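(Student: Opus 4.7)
The plan is to establish the freeness of $P^\gr$ over $H_\GG^{RO(\Pi U_1)}((U_1)_+)$ by treating the defining relations of Definition~\ref{def:P} as an oriented rewriting system on monomials, and then to deduce the $\HS$-statement by combining with the explicit $\HS$-basis of $H_\GG^{RO(\Pi U_1)}((U_1)_+)$ supplied by Theorem~\ref{thm:bt1}.

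For the spanning step, I would orient each of the eight relations so that a designated ``leading'' monomial rewrites as an $\HS$-linear combination of monomials of strictly smaller weight. Under the ring map from $H_\GG^{RO(\Pi U_1)}((U_1)_+)$ one has $\zeta_0 \mapsto \zeta_{00}\zeta_{01}$, $\zeta_1 \mapsto \zeta_{10}\zeta_{11}$, $\cw \mapsto \clo$, and $\cxw \mapsto \cxlo$, so any occurrence of $\zeta_{00}\zeta_{01}$, $\zeta_{10}\zeta_{11}$, $\clo$, or $\cxlo$ in a monomial can be pulled out into the $H_\GG^{RO(\Pi U_1)}((U_1)_+)$-coefficient; this accounts for the forbidden multiples of $\zeta_{00}\zeta_{01}$ and $\zeta_{10}\zeta_{11}$ in the module basis (and for the replacement of $\clo$ and $\cxlo$ by $\clt$, $\cxlt$ in the $\HS$-basis list). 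The remaining forbidden monomials correspond directly to the last seven relations, apart from the four ``derived'' ones $\zeta_{00}^2\clt$, $\zeta_{10}^2\clt$, $\zeta_{01}^2\cxlt$, $\zeta_{11}^2\cxlt$, which appear after multiplying the appropriate $\cltens$ or $\cxltens$ relation by one extra $\zeta_{ij}$ and absorbing a $\zeta_{00}\zeta_{01}$ or $\zeta_{10}\zeta_{11}$ factor. I would then define a well-founded weight on monomials (for instance, the lexicographic order refined by total degree, with $\cltens$ and $\cxltens$ given the highest weights) that each rewrite strictly decreases, which guarantees termination.

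For linear independence, I would bootstrap from the restricted-grading calculation. Corollary~\ref{cor:kunneth bt2} identifies $H_\GG^{SRO(\Pi BT^2)}(BT^2_+)$ with $H_\GG^{RO(\Pi BT^1)}(BT^1_+) \tensorS H_\GG^{RO(\Pi BT^1)}(BT^1_+)$, and there is an evident algebra map from the subring of $P^\gr$ generated by the $\zeta_{ij}$ together with $\clo$, $\cxlo$, $\clt$, $\cxlt$ (whose relations are precisely the first three of Definition~\ref{def:P}) into this tensor product. Applying Theorem~\ref{thm:bt1} to each factor, one checks that this map carries the basis monomials not involving $\cltens$ or $\cxltens$ to $\HS$-linearly independent elements. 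Independence of basis monomials containing a power of $\cltens$ or $\cxltens$ is then handled grading by grading: multiplying such a monomial by an appropriate $\zeta_{ij}$ lands it in $SRO(\Pi BT^2)$, where the $\cltens$/$\cxltens$-relations reduce it to a combination of the already-independent monomials, from which any hypothetical relation descends.

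The main obstacle is the confluence check underlying the spanning step: there are many critical pairs to examine, since a given monomial may admit several distinct reductions (for example, a monomial of the form $\zeta_{00}\zeta_{11}\cltens\cxltens$ can be reduced either by the product relation for $\cltens\cxltens$ or by first expanding $\zeta_{00}\cltens$ and $\zeta_{11}\cltens$ separately). I would handle these by explicit case analysis on the overlaps of left-hand sides, using the $SRO$-graded subring (where all identities follow automatically from Corollary~\ref{cor:kunneth bt2} and the corresponding identities in the cohomology of $BT^1$) as the base case and inducting on the combined $\cltens$/$\cxltens$ degree to bring each critical pair back into that range.
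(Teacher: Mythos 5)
Your spanning step is essentially the paper's: the authors also orient the relations into a reduction system (twelve reductions, including the four derived ones $\zeta_{00}^2\clt$, $\zeta_{10}^2\clt$, $\zeta_{01}^2\cxlt$, $\zeta_{11}^2\cxlt$ obtained exactly as you describe), impose a weight-then-reverse-lexicographic order with the $\zeta$'s lightest and $\cltens,\cxltens$ heaviest, and invoke Bergman's diamond lemma, with the resolution of all ambiguities carried out by brute force in an appendix. One small correction of emphasis: termination alone gives spanning; it is the confluence check that delivers linear independence of the irreducible monomials. You have these roles reversed, which matters because you then supply a separate independence argument in place of the confluence check, and that argument has a genuine gap.

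The bootstrap from Corollary~\ref{cor:kunneth bt2} does not work. First, there is no algebra map from the subring of $P^\gr$ generated by the $\zeta_{ij}$, $\clo$, $\cxlo$, $\clt$, $\cxlt$ into $H_\GG^{RO(\Pi BT^1)}(BT^1_+)\tensorS H_\GG^{RO(\Pi BT^1)}(BT^1_+)$: that tensor product is concentrated in gradings lying in $SRO(\Pi BT^2)$, whereas $\grad\zeta_{ij}=\Omega_{ij}$ is not in $SRO(\Pi BT^2)$, so there is nowhere for an individual $\zeta_{ij}$ to go; only the products $\zeta_{00}\zeta_{01}$ and $\zeta_{10}\zeta_{11}$ land in the restricted range, and most of the basis monomials you want to see as independent sit in gradings outside it. Your parenthetical claim that the relations of this subring are ``precisely the first three'' is also unjustified --- eliminating $\cltens$ between the fourth and fifth relations produces a further identity among the remaining generators, and deciding whether it is a consequence of the first three is exactly the kind of question the confluence check settles; asserting it assumes what is to be proved. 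Second, the descent step fails: multiplication by $\zeta_{ij}$ is not injective in $P^\gr$ (the $\zeta_{ij}$ are zero divisors, e.g.\ their product is $\xi$ and $e^{-2}\kappa\cdot\xi=0$), so a linear relation among the elements $\zeta_{ij}m_k$ does not descend to one among the $m_k$. Finally, resolving an ambiguity in the diamond lemma is a syntactic computation in the free commutative algebra over $H_\GG^{RO(\Pi U_1)}((U_1)_+)$, not an identity to be verified in some ring that $P^\gr$ maps to, so the proposed reduction of the critical-pair analysis to the $SRO$-graded range cannot replace the explicit overlap-by-overlap verification that the paper performs.
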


\begin{proof}
As an algebra over $ H_\GG^{RO(\Pi U_1)}((U_1)_+)$, $ P^\gr$ is generated by
$\zeta_{00}$, $\zeta_{01}$, $\zeta_{10}$, $\zeta_{11}$,
$\clt$, $\cxlt$,
$\cltens$, and $\cxltens$,
subject to the following relations, in which we think of $\zeta^{\omega_1-2}$, $\zeta^{\chi\omega_1-2}$,
$\clo$, and $\cxlo$ as elements of $ H_\GG^{RO(\Pi U_1)}((U_1)_+)$:
\begin{itemize}
\item $\zeta_{00}\zeta_{01} = \zeta^{\chi\omega_1-2}$
\item $\zeta_{10}\zeta_{11} = \zeta^{\omega_1-2}$
\item $\zeta_{01}\zeta_{11}\cxlt = (1-\kappa) \zeta_{00}\zeta_{10}\clt + e^2$
\item $\zeta_{00} \cltens 
		= \zeta_{10}\cxlo + (1-\kappa)(1-\epsilon_1)\zeta_{01}\cxlt$
\item $\zeta_{11} \cltens 
		= \zeta_{01}\clo + (1-\epsilon_1)\zeta_{10}\clt$
\item $\zeta_{01} \cxltens 
		= \zeta_{11}\cxlo + (1-\kappa)(1-\epsilon_1)\zeta_{00}\clt$
\item $\zeta_{10} \cxltens 
		= \zeta_{00}\clo + (1-\epsilon_1)\zeta_{11}\cxlt$
\item $\cltens\cxltens =
		\clo\cxlo + \clt\cxlt
		+ \tau(\iota^{-2})\zeta^{\chi\omega_1-2}\clo\zeta_{00}\zeta_{10}\clt$.
\end{itemize}
Here,
\[
    \epsilon_1 = e^{-2}\kappa \zeta_0\cw \in H_\GG^0((U_1)_+).
\]
Recall from \cite{Co:InfinitePublished} that $1-\kappa$ and $1-\epsilon_1$ 
are units in
$ H_\GG^{RO(\Pi U_1)}((U_1)_+)$, in fact their squares equal 1.

As in \cite{Co:InfinitePublished}, we will use Bergman's diamond lemma, Theorem~1.2 of \cite{Berg:diamondLemma}, as modified
for the commutative case by the comments in his \S10.3.
Let
\[
 E = \{\zeta_{00},\zeta_{01},\zeta_{10},\zeta_{11}, \cxlt, \clt,
  \cxltens, \cltens \}
\]
be the ordered set of generators and let $[E]$ denote the free commutative monoid on $E$, the set of monomials
in the generators.
Let $ Q \subset  P^\gr$ be the ideal generated by the relations above.
Then
\[
  P^\gr =  H_\GG^{RO(\Pi U_1)}((U_1)_+)[E]/ Q.
\]
We impose the following order on $[E]$. First we assign weights to each generator: The weights of
the $\zeta_{ij}$ are each $1$, the weights of $\clt$ and $\cxlt$ are each $2$,
and the weights of $\cltens$ and $\cxltens$ are each $4$.
A monomial 
\[
 (\prod_{ij}\zeta_{ij}^{a_{ij}})\clt^k \cxlt^\ell
  \cltens^m \cxltens^n
\]
then has weight $(\sum_{ij} a_{ij}) + 2(k+\ell) + 4(m+n)$.
We order monomials first by weight and then, for monomials of the same weight,
by reverse lexicographical order, so that a monomial with a higher power of $\cltens$
precedes one with a lower power, and if the powers are equal, the monomial with a higher power
of $\cxltens$ precedes the one with a lower power, and so on back through the order
specified for $E$.
This order obeys the multiplicative property that, if $A < B$ then $AC < BC$ for any monomial $C$.
It also obeys the descending chain condition (because there are only finitely many monomials of a given weight).

We also need to specify a {\em reduction system}, consisting of pairs $(W,f)$, where $W \in [E]$,
$f\in  H_\GG^{RO(\Pi U_1)}((U_1)_+)[E]$ is a polynomial with each of its monomials preceding $W$
in the order on $[E]$, and $W-f \in  Q$. Further, we need that $ Q$ is the ideal
generated by the collection of all the $W-f$. Here is the reduction system we shall use,
where each pair $(W,f)$ is written as $W\mapsto f$.
\begin{enumerate}[label=R\arabic*,series=reductions]
\item
\label{red:1}	$\zeta_{00}\zeta_{01} \mapsto \zeta^{\chi\omega_1-2}$

\item
\label{red:2}	$\zeta_{10}\zeta_{11} \mapsto \zeta^{\omega_1-2}$

\item
\label{red:3}	$\zeta_{01}\zeta_{11}\cxlt
				\mapsto (1-\kappa)\zeta_{00}\zeta_{10}\clt + e^2$

\item
\label{red:4}	$\zeta_{00}^2 \clt
				\mapsto \zeta^{\chi\omega_1-2}\cxltens
				- (1 - \kappa)(1 - \epsilon_1)\cxlo\zeta_{00}\zeta_{11}$

\item
\label{red:5}	$\zeta_{10}^2 \clt
				\mapsto \zeta^{\omega_1-2}\cltens
				- (1 - \epsilon_1)\clo\zeta_{01}\zeta_{10}$

\item
\label{red:6}	$\zeta_{01}^2 \cxlt
				\mapsto \zeta^{\chi\omega_1-2}\cltens
				- (1 - \kappa)(1 - \epsilon_1)\cxlo\zeta_{01}\zeta_{10}$

\item
\label{red:7}	$\zeta_{11}^2 \cxlt
				\mapsto \zeta^{\omega_1-2} \cxltens
				- (1 - \epsilon_1)\clo\zeta_{00}\zeta_{11}$

\item
\label{red:8}	$\zeta_{00}\cltens
				\mapsto \cxlo\zeta_{10} 
				+ (1-\kappa)(1-\epsilon_1)\zeta_{01}\cxlt$

\item
\label{red:9}	$\zeta_{11}\cltens
				\mapsto \clo\zeta_{01} 
				+ (1-\epsilon_1)\zeta_{10}\clt$

\item
\label{red:10}	$\zeta_{01}\cxltens
				\mapsto \cxlo\zeta_{11} 
				+ (1-\kappa)(1-\epsilon_1)\zeta_{00}\clt$

\item
\label{red:11}	$\zeta_{10}\cxltens
				\mapsto \clo\zeta_{00} 
				+ (1-\epsilon_1)\zeta_{11}\cxlt$

\item
\label{red:12}	$\cltens\cxltens
				\mapsto \clo\cxlo + \clt\cxlt
				+ \tau(\iota^{-2})\zeta^{\chi\omega_1-2}\clo\zeta_{00}\zeta_{10}\clt$
\end{enumerate}
The reader can check that the monomial on the right precedes the monomial on the left in each of 
these reductions.
The eight relations we specified appear as the first three and the last five 
reductions, so it is clear
that $W-f \in  Q$ for those eight. For \ref{red:4}, we have, working modulo $ Q$,
\begin{align*}
 \zeta_{00}^2 \clt
 	&\equiv (1-\kappa)(1-\epsilon_1)
			(\zeta^{\chi\omega_1-2}\cxltens - \cxlo\zeta_{00}\zeta_{11}) \\
	&= \zeta^{\chi\omega_1-2}\cxltens
			- (1-\kappa)(1-\epsilon_1)\cxlo\zeta_{00}\zeta_{11}
\end{align*}
where the congruence comes from multiplying the sixth relation by $\zeta_{00}$
and the equality follows from the facts that
$(1-\epsilon_1)\zeta_0 = (1-\kappa)\zeta_0$ and $(1-\kappa)^2 = 1$ 
in $ H_\GG^{RO(\Pi U_1)}((U_1)_+)$.
\ref{red:5}--\ref{red:7} are similar.

On the other hand, because all eight of our original relations appear as reductions, it is clear that $ Q$
is generated by the collection of elements $W-f$.

We think of each reduction as a way of rewriting a monomial $AW$ as $Af$, which we extend to polynomials.
What remains to be shown is that we can resolve all ambiguities in the reduction system. That is,
that if a monomial can be written both as $AW_1$ and $BW_2$, and we reduce to $Af_1$ and $Bf_2$,
respectively, then we can apply a sequence of further reductions to ultimately end at the same polynomial.
Thus, we need to take the reductions in pairs. 
This tedious check is relegated to the appendix and we assume it done.

Bergman's diamond lemma now applies to show that $ P^\gr$ is a free
module over $ H_\GG^{RO(\Pi U_1)}((U_1)_+)$ with the basis described in the proposition.
Because we know that $ H_\GG^{RO(\Pi U_1)}((U_1)_+)$
is a free module over $ \HS$, it follows that
$ P^\gr$ is also a free module over $ \HS$.
The basis given in the proposition follows from combining 
\cite[Theorems~10.3 and~11.3]{Co:InfinitePublished}, which
give a basis for
$ H_\GG^{RO(\Pi U_1)}((U_1)_+)$ over $ \HS$, with what we just
proved about the basis for $ P^\gr$ over $ H_\GG^{RO(\Pi U_1)}((U_1)_+)$.
\end{proof}

\section{The cohomology of the fixed set $(B T^2)^\GG$}\label{sec:fixed sets}

Write $(B T^2)^\GG = \Disjunion_{i,j\in\{0,1\}} B^{ij}$
where
\begin{align*}
 B^{00} &= \Xp\infty\times\Xp\infty \\
 B^{01} &= \Xp\infty\times\Xq\infty \\
 B^{10} &= \Xq\infty\times\Xp\infty \\
 B^{11} &= \Xq\infty\times\Xq\infty.
\end{align*}
Then we have the following calculation.

\begin{proposition}\label{prop:fixedpoints}
The cohomology of $(B T^2)^\GG$ is given by
\begin{multline*}
  H_\GG^\gr((B T^2)_+^\GG) \iso
  \Dirsum_{i,j \in \{0,1\}}  H_\GG^\gr(B^{ij}_+) \\
  \iso \Dirsum_{i,j} \HS[x_1,x_2,\zeta_{k\ell},\zeta_{k\ell}^{-1}
  						\mid k, \ell \in \{0,1\}, (k,\ell) \neq (i,j)]
\end{multline*}
where $\grad x_1 = \grad x_2 = 2$ and $\grad \zeta_{k\ell} = \Omega_{k\ell}$.
\end{proposition}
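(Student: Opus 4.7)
The plan is to reduce to the $BT^1$ calculation by exploiting the product and disjoint-union structure of $(BT^2)^\GG$. Since $(BT^2)^\GG = \bigsqcup_{i,j\in\{0,1\}} B^{ij}$, additivity of cohomology on disjoint unions immediately gives
\[
H_\GG^\gr((BT^2)^\GG_+) \iso \bigoplus_{i,j} H_\GG^\gr(B^{ij}_+),
\]
so it suffices to describe each summand. Each $B^{ij} = B_1^i \times B_2^j$ is a product of two fixed components of $BT^1$, one in $U_1$ and one in $U_2$, and each factor is nonequivariantly a $\CP^\infty$ with trivial $\GG$-action.

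For a single factor $B_1^i$, I would compute $H_\GG^{RO(\Pi BT^1)}((B_1^i)_+)$ by restricting Theorem~\ref{thm:bt1} along the inclusion $B_1^i \hookrightarrow BT^1$. On such a fixed component, the tautological line bundle and its twist both restrict to (sign-twisted) trivial-bundle analogues, so either $\cw$ or $\cxw$ specializes to an ordinary degree-$2$ Euler class $x$; meanwhile the relation $\zeta_0 \zeta_1 = \xi$ together with the fact that only part of the $RO(\Pi BT^1)$-grading is visible on the trivially-acted space $B_1^i$ forces the ``opposite-component'' class $\zeta_{1-i}$ to be invertible (with $\zeta_i$ recovered from the relation). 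This gives $H_\GG^{RO(\Pi BT^1)}((B_1^i)_+) \iso \HS[x,\zeta_{1-i}^{\pm 1}]$. This ring is free over $\HS$, so Proposition~\ref{prop:kunneth} applies to give
\[
H_\GG^{SRO(\Pi B^{ij})}((B^{ij})_+) \iso H_\GG^{RO(\Pi B_1^i)}((B_1^i)_+) \tensorS H_\GG^{RO(\Pi B_2^j)}((B_2^j)_+).
\]

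To promote from the $SRO$-graded result to the full $RO(\Pi BT^2)$-grading, I would use that $B^{ij}$ is connected with trivial $\GG$-action, so the restriction map $RO(\Pi BT^2) \to RO(\Pi B^{ij}) \iso RO(\GG)$ has a rank-three kernel, and cohomology shifts by kernel elements are witnessed by formally invertible classes. Identifying these invertible classes with the three $\zeta_{k\ell}^{\pm 1}$ indexed by $(k,\ell) \neq (i,j)$ (with $\zeta_{ij}$ then expressible via $\prod_{k,\ell}\zeta_{k\ell} = \xi$) yields the stated Laurent polynomial form. The main obstacle I anticipate is the bookkeeping needed to verify that it is precisely the three $\zeta_{k\ell}$ with $(k,\ell)\neq(i,j)$, rather than some other subset, that become units on $B^{ij}$. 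This reduces to tracing the restriction of each $\Omega_{k\ell}$ to $RO(\GG)$ on each component, using the pullback identities $\pi_1^*\Omega_i = \Omega_{i0} + \Omega_{i1}$ and $\pi_2^*\Omega_j = \Omega_{0j} + \Omega_{1j}$ combined with the known restriction behavior of $\Omega_0,\Omega_1$ on the two components of $(BT^1)^\GG$ from \cite{Co:InfinitePublished}, and similarly for identifying the two Euler classes $x_1,x_2$ as the restrictions of the appropriate Euler generators among $\clo,\cxlo,\clt,\cxlt$ on each piece.
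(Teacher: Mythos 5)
Your proposal is correct and amounts to the same argument the paper intends (its proof is a one-line reference to the analogous computation for $(BT^1)^\GG$ in \cite{Co:InfinitePublished}): split off the four components $B^{ij}$, identify each as a trivially-acted product of infinite projective spaces whose $RO(\GG)$-graded cohomology is $\HS[x_1,x_2]$, and account for the passage to the $RO(\Pi BT^2)$-grading by the three invertible classes $\zeta_{k\ell}$ with $(k,\ell)\neq(i,j)$; routing the product step through Proposition~\ref{prop:kunneth} rather than directly through the even-cell structure of $\CP^\infty\times\CP^\infty$ is only an organizational difference. One small caution: the phrase ``restricting Theorem~\ref{thm:bt1} along the inclusion'' does not by itself compute the cohomology of the subspace $B_1^i$ (a restriction map need not be surjective), but your surrounding discussion makes clear you mean the direct computation on the trivially-acted component, so this is a matter of wording rather than a gap.
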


\begin{proof}
This is proved similarly to \cite[Proposition~6.4]{Co:InfinitePublished}.
The element $x_1$ comes from the nonequivariant Euler class of the tautological line bundle over the first factor
of $B^{ij}$ while $x_2$ comes from the second factor.
\end{proof}

One reason for looking at $H_\GG^\gr((B T^2)^\GG_+)$ is that, by \cite[Corollary~3.5]{CH:bu2},
the map
\[
    \eta\colon H_\GG^\gr(B T^2_+) \to H_\GG^\gr((B T^2)^\GG_+)
\]
induced by the inclusion $(B T^2)^\GG \to BT^2$ is a monomorphism in even gradings, meaning the following.

\begin{definition}\label{def:even grading}
Let $\ROev(\Pi BT^2) \subset RO(\Pi BT^2)$ be the subring defined by
\[
    \ROev(\Pi BT^2) = \left\{ a + b\sigma + {\textstyle\sum m_{ij}\Omega_{ij}} \mid \text{$a$ and $b$ even} \right\}.
\]
We call this the subgroup of \emph{even gradings.}
\end{definition}

The elements that we claim are generators all lie in even gradings, so we now
compute their images under $\eta$.

\begin{proposition}\label{prop:restrictions}
Write elements of $ H_\GG^\gr((B T^2)^\GG_+)$ as quadruples indexed by
$00$, $01$, $10$, and $11$ in that order. Then, under the restriction to fixed points map
\[
    \eta\colon H_\GG^\gr(B T^2_+) \to H_\GG^\gr((B T^2)^\GG_+),
\]
we have the following values of various elements:
\begin{align*}
 \eta(\zeta_{00}) &= (\ \xi\prod_{\mathclap{(i,j)\neq (0,0)}}\zeta_{ij}^{-1},\ \zeta_{00},\ \zeta_{00},\ \zeta_{00}) \\
 \eta(\zeta_{01}) &= (\zeta_{01},\ \xi\prod_{\mathclap{(i,j)\neq(0,1)}} \zeta_{ij}^{-1},\ \zeta_{01},\ \zeta_{01}) \\
 \eta(\zeta_{10}) &= (\zeta_{10},\ \zeta_{10},\ \xi\prod_{\mathclap{(i,j)\neq(1,0)}} \zeta_{ij}^{-1},\ \zeta_{10}) \\
 \eta(\zeta_{11}) &= (\zeta_{11},\ \zeta_{11},\ \zeta_{11},\ \xi\prod_{\mathclap{(i,j)\neq (1,1)}} \zeta_{ij}^{-1}) \\
 \eta(\clo) &= (x_1\zeta^{\omega_1-2},\ x_1\zeta^{\omega_1-2},\ 
 					 (e^2 + \xi x_1)\zeta^{\omega_1-2\sigma},\ (e^2 + \xi x_1)\zeta^{\omega_1-2\sigma}) \\
 \eta(\cxlo) &= ((e^2 + \xi x_1)\zeta^{\chi\omega_1-2\sigma},\ 
 					(e^2 + \xi x_1)\zeta^{\chi\omega_1-2\sigma},\ 
 					x_1\zeta^{\chi\omega_1-2},\ x_1\zeta^{\chi\omega_1-2}) \\
 \eta(\clt) &= (x_2\zeta^{\omega_2-2},\ (e^2+\xi x_2)\zeta^{\omega_2-2\sigma},\ 
 					x_2\zeta^{\omega_2-2},\ (e^2+\xi x_2)\zeta^{\omega_2-2\sigma}) \\
 \eta(\cxlt) &= ((e^2+\xi x_2)\zeta^{\chi\omega_2-2\sigma},\ x_2\zeta^{\chi\omega_2-2},\ 
 					(e^2+\xi x_2)\zeta^{\chi\omega_2-2\sigma},\ x_2\zeta^{\chi\omega_2-2}) \\
 \eta(\cltens) &= ((x_1+x_2)\zeta^{\omega_1\tensor\omega_2-2},\ 
				 (e^2+\xi (x_1+x_2))\zeta^{\omega_1\tensor\omega_2-2\sigma}, \\
		&\qquad	 (e^2+\xi(x_1+x_2))\zeta^{\omega_1\tensor\omega_2-2\sigma},
				 (x_1+x_2)\zeta^{\omega_1\tensor\omega_2-2}) \\
 \eta(\cxltens) &= ((e^2+\xi (x_1+x_2))\zeta^{\chi\omega_1\tensor\omega_2-2\sigma},\ 
				(x_1+x_2)\zeta^{\chi\omega_1\tensor\omega_2-2}, \\
		&\qquad	 (x_1+x_2)\zeta^{\chi\omega_1\tensor\omega_2-2},
				(e^2+\xi(x_1+x_2))\zeta^{\chi\omega_1\tensor\omega_2-2\sigma})
\end{align*}
\end{proposition}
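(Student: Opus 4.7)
The plan is to exploit naturality of $\eta$ with respect to the projections $\pi_k \colon BT^2 \to U_k = BT^1$ (for $k=1,2$), together with the restriction formulas for $BT^1$ from \cite[Proposition~6.4]{Co:InfinitePublished}. Under $\pi_k^*$, the generators of the cohomology of $BT^1$ pull back as
\[
    \pi_1^*\zeta_0 = \zeta_{00}\zeta_{01}, \quad \pi_1^*\zeta_1 = \zeta_{10}\zeta_{11}, \quad \pi_1^*\cw = \clo, \quad \pi_1^*\cxw = \cxlo,
\]
and symmetrically for $\pi_2$, producing $\zeta_{00}\zeta_{10}$, $\zeta_{01}\zeta_{11}$, $\clt$, $\cxlt$. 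Since $\pi_k^\GG$ restricted to $B^{ij}$ is the projection onto the appropriate component of $(BT^1)^\GG = B_0 \disjunion B_1$, naturality $\eta \circ \pi_k^* = (\pi_k^\GG)^* \circ \eta_1$ immediately yields $\eta(\clo)$, $\eta(\cxlo)$, $\eta(\clt)$, $\eta(\cxlt)$ by transporting the known $BT^1$ formulas.

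To extract the individual restrictions $\eta(\zeta_{ij})$ (rather than only the pairs given by the projections), note that on $B^{k\ell}$ the three $\zeta_{mn}$ with $(m,n) \neq (k,\ell)$ are invertible, while the relation $\prod \zeta_{ij} = \xi$ persists after restriction. Combined with the pair identities obtained above, solving in each component yields that $\eta(\zeta_{ij})|_{B^{k\ell}}$ is the named generator $\zeta_{ij}$ when $(k,\ell) \neq (i,j)$, and equals $\xi \prod_{(m,n)\neq(i,j)} \zeta_{mn}^{-1}$ on the component $B^{ij}$ itself, where $\zeta_{ij}$ is no longer a generator but is determined by the product relation.

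The remaining and most interesting case is $\cltens$ and $\cxltens$, which are not pullbacks from either factor. Consider the multiplication map $\mu \colon BT^1 \times BT^1 \to BT^1$ classifying $\omega_1 \tensor \omega_2$, so that $\cltens = \mu^* \cw$ and $\cxltens = \mu^* \cxw$. On fixed points, $\mu^\GG$ sends $B^{ij}$ to $B_0$ when $i+j$ is even and to $B_1$ when $i+j$ is odd, matching the tensor product of the fiber representations, while on the nonequivariant Euler class, $\mu^*$ acts by $x \mapsto x_1 + x_2$. Pulling back the $BT^1$ formulas for $\eta_1(\cw)$ and $\eta_1(\cxw)$ via $\mu^\GG$ on each component produces the stated values, with the grading factors $\zeta^{\omega_1\tensor\omega_2-2}$ and $\zeta^{\omega_1\tensor\omega_2-2\sigma}$ arising as pullbacks of their $BT^1$ counterparts.

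The main technical obstacle is the careful grading bookkeeping on each $B^{ij}$: the symbols $\zeta^{\alpha}$ for $\alpha$ outside the monoid of nonnegative $\Omega$-sums acquire meaning via units on the fixed-set cohomology, and one must verify that the pullbacks of the $BT^1$ grading elements through $\pi_k^\GG$ and $\mu^\GG$ match the stated expressions. Since all these grading shifts are inherited from the $BT^1$ situation under naturality, this verification reduces to the computations already carried out in \cite{Co:InfinitePublished}.
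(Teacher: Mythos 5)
The paper's own proof of this proposition is a one-line citation: the calculations are said to be ``similar to the proofs of Proposition~6.5 and Corollary~8.2 of \cite{Co:InfinitePublished}'', i.e.\ one repeats the direct $BT^1$-style computation for each class. Your route is genuinely different and, for the Euler classes, arguably more efficient: you reduce to the already-known $BT^1$ formulas by naturality of $\eta$ along the two projections ($\clo=\pi_1^*\cw$, etc.) and along the map $\mu$ classifying $\omega_1\tensor\omega_2$ ($\cltens=\mu^*\cw$, $\cxltens=\mu^*\cxw$). The identification of $\mu^\GG$ on components according to the parity of $i+j$, together with the fact that $\mu$ induces $x\mapsto x_1+x_2$ nonequivariantly and carries the grading units $\zeta^{\omega-2}$, $\zeta^{\omega-2\sigma}$ to $\zeta^{\omega_1\tensor\omega_2-2}$, $\zeta^{\omega_1\tensor\omega_2-2\sigma}$, gives exactly the stated quadruples. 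This is the same mechanism the paper itself exploits later (the map $t$ in \S\ref{sec:notes} and the classifying map $p$ in Proposition~\ref{prop:tensor w fixed}), so it is consistent with the paper's framework; what it buys is that the only genuinely new computation is for $BT^1$.

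The one genuine gap is your treatment of the $\zeta_{ij}$. From the projections you learn only the four pairwise products $\eta(\zeta_{00}\zeta_{01})$, $\eta(\zeta_{10}\zeta_{11})$, $\eta(\zeta_{00}\zeta_{10})$, $\eta(\zeta_{01}\zeta_{11})$, together with $\eta(\prod_{i,j}\zeta_{ij})=\xi$, and these data do \emph{not} determine the individual restrictions: on any fixed component, replacing the quadruple $(a,b,c,d)=(\eta\zeta_{00},\eta\zeta_{01},\eta\zeta_{10},\eta\zeta_{11})$ by $(\lambda a,\lambda^{-1}b,\lambda^{-1}c,\lambda d)$ for any unit $\lambda$ preserves all five of those quantities, so ``solving'' is not available. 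What you actually need as input is that, by the very construction of the classes $\zeta_{mn}$ (exactly as for $\zeta_0,\zeta_1$ on $BT^1$), $\eta(\zeta_{mn})$ restricts to the invertible generator $\zeta_{mn}$ of $H_\GG^\gr(B^{k\ell}_+)$ for every $(k,\ell)\neq(m,n)$; granting that, the relation $\prod\zeta_{ij}=\xi$ does force the remaining component to be $\xi\prod_{(m,n)\neq(i,j)}\zeta_{mn}^{-1}$. That first input is precisely the content of the cited Proposition~6.5 of \cite{Co:InfinitePublished} and cannot be recovered from the ring relations alone.
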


\begin{proof}
The proof of these calculations is similar to the proofs of Proposition~6.5 and Corollary~8.2 of
\cite{Co:InfinitePublished}.
\end{proof}

With this, we can verify that the relations used in defining $P^\gr$ do actually
hold in $H_\GG^\gr(B T^2_+)$.

\begin{corollary}\label{cor:relations}
In $H_\GG^\gr(B T^2_+)$ we have the relations
\begin{align*}
    \prod_{i,j} \zeta_{ij} &= \xi \\
    \zeta_{10}\zeta_{11}\cxlo &= (1-\kappa) \zeta_{00}\zeta_{01}\clo + e^2 \\
    \zeta_{01}\zeta_{11}\cxlt &= (1-\kappa) \zeta_{00}\zeta_{10}\clt + e^2 \\
    \zeta_{00} \cltens &= \zeta_{10}\cxlo + \zeta_{01}\cxlt
		- e^{-2}\kappa \zeta_{01}\zeta_{10}\zeta_{11}\cxlo\cxlt \\
    \zeta_{11} \cltens &= \zeta_{01}\clo + \zeta_{10}\clt
		- e^{-2}\kappa \zeta_{00}\zeta_{01}\zeta_{10}\clo\clt \\
    \zeta_{01} \cxltens &= \zeta_{11}\cxlo + \zeta_{00}\clt
		- e^{-2}\kappa \zeta_{00}\zeta_{10}\zeta_{11}\cxlo\clt \\
    \zeta_{10} \cxltens &= \zeta_{00}\clo + \zeta_{11}\cxlt
		- e^{-2}\kappa \zeta_{00}\zeta_{01}\zeta_{11}\clo\cxlt \\
    \cltens\cxltens &= \clo\cxlo + \clt\cxlt
		+ \tau(\iota^{-2})\zeta_{00}^2\zeta_{01}\zeta_{10}\clo\clt.
\end{align*}
\end{corollary}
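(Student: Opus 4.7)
The plan is to exploit the monomorphism $\eta\colon H_\GG^\gr(BT^2_+)\to H_\GG^\gr((BT^2)^\GG_+)$ on even gradings (cited from \cite[Corollary~3.5]{CH:bu2}) together with the explicit restriction formulas in Proposition~\ref{prop:restrictions}. Each of the eight asserted relations should be checked to sit in a grading lying in $\ROev(\Pi BT^2)$, after which it suffices to verify the equality after applying $\eta$, since $\eta$ is injective in even gradings.

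First I would confirm the grading check: the grading of $\prod \zeta_{ij}$ is $\sum \Omega_{ij} = 2\sigma - 2$, matching that of $\xi \in RO(\GG)$; the gradings of $\zeta_{10}\zeta_{11}\cxlo$, $\zeta_{00}\zeta_{01}\clo$ and $e^{2}$ all equal $2\sigma$; and analogously for the other relations, using that each $c_L$ has grading $2+\sigma$-coordinate depending on two $\Omega_{ij}$'s, so that the combinations appearing have integer and $\sigma$ coefficients that are even. This is a routine bookkeeping step.

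Next I would plug in the formulas from Proposition~\ref{prop:restrictions} componentwise. For the first relation the restriction to each of the four components $B^{ij}$ reduces immediately to $\xi=\xi$. For the three ``$\zeta\zeta c = (1-\kappa)\zeta\zeta c + e^2$'' type relations, each component yields an identity of the form $x\zeta^{\alpha}\cdot\text{(stuff)} = \text{(stuff)} + e^{2}$ which follows from the relation $\zeta_{1}\cxw = (1-\kappa)\zeta_{0}\cw + e^{2}$ in $H_\GG^{RO(\Pi BT^1)}(BT^1_+)$ (Theorem~\ref{thm:bt1}) applied to each $BT^1$ factor, after substituting $\xi = \zeta_0\zeta_1$ where $\zeta_{ij}$'s multiply out. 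For the four ``mixed'' relations involving $\cltens$ and $\cxltens$, each component restriction replaces $\cltens$ and $\cxltens$ by $(x_1+x_2)\zeta^{\alpha}$ or $(e^{2}+\xi(x_1+x_2))\zeta^{\alpha}$, and after clearing the $\zeta^{\alpha}$ prefactors (using $\zeta_0\zeta_1 = \xi$ in each factor to combine or cancel them) both sides become identities in the polynomial ring $\HS[x_1,x_2]$ coming from the trivial sum $x_1+x_2 = x_1+x_2$, plus correction terms contributed by the $e^{-2}\kappa$ summand on the right that exactly match the $e^2$ contributions coming from rewriting a $(1-\kappa)$ using the identity $e^2 = (1-\kappa)\zeta_0 c_\omega \cdot \zeta_1 - \text{stuff}$ on the appropriate $BT^1$ factor. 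The last relation, $\cltens\cxltens = \clo\cxlo+\clt\cxlt + \tau(\iota^{-2})\zeta_{00}^2\zeta_{01}\zeta_{10}\clo\clt$, when restricted componentwise, gives products of the form $(x_1+x_2)\cdot\text{something}$ that must equal $x_1\cdot\text{something}_1 + x_2\cdot\text{something}_2$ plus a correction; here the $\tau(\iota^{-2})$ transfer term is expected to vanish on components where the two factors are both ``$+$'' or both ``$-$'' sign-fixed and to contribute a $2x_1x_2$-type correction on the mixed components, matching the difference between $(x_1+x_2)^2$ and $x_1^2+x_2^2$ on the relevant component.

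The main obstacle is the last relation: the interaction between $\tau(\iota^{-2})$ and the $\zeta$-prefactors must be tracked carefully through each of the four components, using identities for $\xi$, $\kappa$, and $e$ in $\HS$ (in particular the standard identities $(1-\kappa)^2 = 1$, $e^{-2}\kappa\cdot e^2 = \kappa$, and $\tau(\iota^{-2})$ restricting appropriately to fixed points) to reconcile the different apparent forms on the four $B^{ij}$'s. Apart from this, each verification is a direct but tedious substitution, and once all eight relations are shown to agree under $\eta$ on every component, injectivity of $\eta$ on even gradings closes the proof.
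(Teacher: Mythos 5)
Your proposal is correct and follows essentially the same route as the paper: the paper's proof of Corollary~\ref{cor:relations} likewise observes that all eight relations live in even gradings, where $\eta$ is injective by \cite[Corollary~3.5]{CH:bu2}, and then verifies each one componentwise using the restriction formulas of Proposition~\ref{prop:restrictions}. The only quibble is a miscount (there are two, not three, relations of the form $\zeta\zeta c = (1-\kappa)\zeta\zeta c + e^2$), which does not affect the argument.
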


\begin{proof}
These equalities all involve elements in even gradings, where $\eta$ is injective, so can be checked one by one
by applying $\eta$ and using the calculations of the preceding proposition.
\end{proof}

\section{The cohomology of $B T^2$}\label{sec:main result}

It follows from Corollary~\ref{cor:relations} and the definition of $P^\gr$
that there is a ring map $P^\gr\to H_\GG^\gr(B T^2_+)$ taking the generators of $P^\gr$
to the elements of the same name in $H_\GG^\gr(B T^2_+)$.
Using \cite[Proposition~6.1]{CH:bu2}, we will prove that the image of a basis of $P^\gr$ is
a basis for $H_\GG^\gr(B T^2_+)$ by showing that the restrictions
of that basis to the nonequivariant cohomology of $BT^2$ and to the nonequivariant
cohomology of $(BT^2)^\GG$ are bases. This then implies that $P^\gr$ and $H_\GG^\gr(B T^2_+)$
are isomorphic rings.

\subsection{Restriction to nonequivariant cohomology}
We start with the map
\[
    \rho\colon H_\GG^\gr(B T^2_+) \to H_\rho^\gr(B T^2_+;\Z)
        = H^\gr(BT^2_+;\Z).
\]
We are regrading $H^\Z(BT^2_+;\Z)$ on $RO(\Pi BT^2)$
via the map $\rho\colon RO(\Pi BT^2)\to \Z$ given by
\[
    \rho(1) = 1 \qquad \rho(\sigma) = 1 \qquad \rho(\Omega_{ij}) = 0.
\]
(See \cite[\S5]{CH:bu2} for a discussion of regrading graded rings.)
The kernel of this map of gradings is the subgroup generated by the $\Omega_{ij}$ together with $\sigma-1$,
and there are corresponding
units in $H^\gr(BT^2_+;\Z)$ which we can identify with the elements $\rho(\zeta_{ij})$
and the element we have called $\iota$, respectively.
We write $\zeta_{ij} = \rho(\zeta_{ij})$ again.
Note that
\[
    \prod_{i,j} \zeta_{ij} = \rho(\xi) = \iota^2.
\]

Once more, write $\omega_1$ and $\omega_2$ for the two tautological line bundles
over $BT^2$, and let $x_1 = c_1(\omega_1)$ and $x_2 = c_1(\omega_2)$ be the corresponding
first Chern classes (that is, Euler classes).
Combining the familiar structure of the cohomology of $BT^2$ with the invertible elements introduced
by the regrading, we get the following calculation.
\begin{equation}\label{eqn:nonequivariant}
    H^\gr(BT^2;\Z) = \Z[x_1,x_2,\iota^{\pm 1},\zeta_{ij}^{\pm 1}]
    \Big/\rels{\prod\nolimits_{i,j}\zeta_{ij} = \iota^2}
\end{equation}
where
\[
    \grad x_1 = \grad x_2 = 2,\ \grad\iota = \sigma - 1, \text{ and } \grad\zeta_{ij} = \Omega_{ij}.
\]

The restrictions of various elements from the equivariant
cohomology of $B T^2$ are the following.
\begin{align*}
    \rho(\clo) &= \zeta_{10}\zeta_{11}x_1 = \zeta^{\omega_1-2}x_1 & \rho(\cxlo) &= \zeta^{\chi\omega_1-2}x_1 \\
    \rho(\clt) &= \zeta^{\omega_2-2}x_2 & \rho(\cxlt) &= \zeta^{\chi\omega_2-2}x_2 \\
    \rho(\cltens) &= \zeta^{\omega_1\tensor\omega_2-2}(x_1+x_2)
        & \rho(\cxltens) &= \zeta^{\chi\omega_1\tensor\omega_2-2}(x_1+x_2)
\end{align*}

Returning to $P^\gr$ for the moment, the main calculation we need is the following.
Recall that, when we regrade $H^\Z(S^0;\Z)$ on $RO(\GG)$ via $\rho\colon RO(\GG)\to\Z$, we get
\[
    H_\rho^{RO(\GG)}(S^0;\Z) \iso \Z[\iota^{\pm 1}].
\]

\begin{proposition}\label{prop:rho iso}
\[
    P^\gr\tensorS H_\rho^{RO(\GG)}(S^0;\Z) \iso 
    \Z[\clo,\clt,\iota^{\pm 1},\zeta_{ij}^{\pm 1}]
    \Big/\rels{\prod\nolimits_{i,j}\zeta_{ij} = \iota^2}
\]
\end{proposition}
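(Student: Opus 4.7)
The plan is to exhibit mutual inverse ring homomorphisms between $P^\gr \tensorS H_\rho^{RO(\GG)}(S^0;\Z)$ and the target ring. The first step is to evaluate the structure map $\HS \to \Z[\iota^{\pm 1}]$ on the coefficients appearing in Definition~\ref{def:P}. Since this map combines restriction to the nonequivariant level with regrading by $\rho$, one records that $\xi \mapsto \iota^2$ (as $\grad\xi = 2\sigma-2$ has $\rho$-value $0$), while $\kappa$, $e$, and $e^{-2}\kappa$ all lie in the kernel, and $\tau(\iota^{-2}) \mapsto 2\iota^{-2}$ (a transfer class restricts to $|\GG|=2$ times its argument). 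Substituting these values into the eight defining relations of $P^\gr$ causes them to collapse dramatically: the product relation becomes $\prod_{ij}\zeta_{ij} = \iota^2$; the two Euler-class relations simplify to $\zeta_{10}\zeta_{11}\cxlo = \zeta_{00}\zeta_{01}\clo$ and $\zeta_{01}\zeta_{11}\cxlt = \zeta_{00}\zeta_{10}\clt$; the four relations for $\cltens$ and $\cxltens$ shed their $e^{-2}\kappa$ corrections; and only the $\tau(\iota^{-2})$ term survives in the final relation.

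Since $\prod_{ij}\zeta_{ij} = \iota^2$ is now a unit, each $\zeta_{ij}$ becomes invertible in the tensor product, with $\zeta_{00}^{-1} = \iota^{-2}\zeta_{01}\zeta_{10}\zeta_{11}$ and similarly for the other three. This allows me to define a ring map $\phi$ from the right-hand side into the tensor product sending generators to same-named elements, and the only relation $\prod\zeta_{ij} = \iota^2$ to verify is immediate. Conversely, I would define $\psi$ from the tensor product to the right-hand side by solving the simplified relations for the remaining generators, obtaining $\cxlo \mapsto \zeta_{00}\zeta_{01}\zeta_{10}^{-1}\zeta_{11}^{-1}\clo$, $\cxlt \mapsto \zeta_{00}\zeta_{10}\zeta_{01}^{-1}\zeta_{11}^{-1}\clt$, $\cltens \mapsto \zeta_{11}^{-1}(\zeta_{01}\clo + \zeta_{10}\clt)$, and $\cxltens \mapsto \zeta_{10}^{-1}\zeta_{00}\clo + \zeta_{00}\zeta_{01}^{-1}\clt$. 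Well-definedness of $\psi$ requires checking each simplified relation: the product relation and the two Euler-class relations become tautologies, and the two distinct expressions for $\cltens$ (respectively $\cxltens$) coming from the pair of ``middle'' relations are reconciled using $\prod\zeta_{ij} = \iota^2$.

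The main obstacle is verifying the final relation under $\psi$. Expanding $\psi(\cltens)\psi(\cxltens)$ yields two pure-power terms that match $\psi(\clo\cxlo)$ and $\psi(\clt\cxlt)$, plus a cross term with coefficient $2\zeta_{00}\zeta_{11}^{-1}\clo\clt$. This cross term must agree with $\psi(2\iota^{-2}\zeta_{00}^2\zeta_{01}\zeta_{10}\clo\clt)$, and indeed it does after applying $\iota^{-2}\zeta_{00}\zeta_{01}\zeta_{10} = \zeta_{11}^{-1}$---the factor of $2$ originating from $\tau(\iota^{-2})$ is exactly what the cross-term expansion produces. With $\psi$ now well-defined, verifying $\phi \circ \psi = \id$ and $\psi \circ \phi = \id$ is routine: both compositions fix $\clo, \clt, \iota^{\pm 1}$, and $\zeta_{ij}^{\pm 1}$ on the nose, while $\phi \circ \psi$ returns each of $\cxlo, \cxlt, \cltens, \cxltens$ to itself by invoking precisely the simplified relations that define their images under $\psi$.
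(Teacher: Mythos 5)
Your proposal is correct and follows essentially the same route as the paper: evaluate the coefficients ($\xi\mapsto\iota^2$, $\kappa,e,e^{-2}\kappa\mapsto 0$, $\tau(\iota^{-2})\mapsto 2\iota^{-2}$), observe that the simplified relations make the $\zeta_{ij}$ invertible and express $\cxlo$, $\cxlt$, $\cltens$, $\cxltens$ in terms of $\clo$ and $\clt$, and conclude. The only difference is that you spell out the explicit inverse map and the redundancy/consistency checks that the paper dismisses with ``the result is then clear,'' and those checks are carried out correctly.
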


\begin{proof}
We examine what happens with the relations defining $P^\gr$ when we change
the ground ring from $\HS$ to $H^{RO(\GG)}(S^0;\Z)$.
\begin{itemize}
\item $\prod_{i,j} \zeta_{ij} = \xi$ becomes
\[
    \prod_{ij}\zeta_{ij} = \iota^2
\]
\item $\zeta_{10}\zeta_{11}\cxlo = (1-\kappa) \zeta_{00}\zeta_{01}\clo + e^2$ becomes
\[
    \zeta_{10}\zeta_{11}\cxlo = \zeta_{00}\zeta_{01}\clo
\]
\item $\zeta_{01}\zeta_{11}\cxlt = (1-\kappa) \zeta_{00}\zeta_{10}\clt + e^2$ becomes
\[
    \zeta_{01}\zeta_{11}\cxlt = \zeta_{00}\zeta_{10}\clt
\]
\item $\zeta_{00} \cltens = \zeta_{10}\cxlo + \zeta_{01}\cxlt
		- e^{-2}\kappa \zeta_{01}\zeta_{10}\zeta_{11}\cxlo\cxlt$ becomes
\[
    \zeta_{00} \cltens = \zeta_{10}\cxlo + \zeta_{01}\cxlt
\]
\item $\zeta_{11} \cltens = \zeta_{01}\clo + \zeta_{10}\clt
		- e^{-2}\kappa \zeta_{00}\zeta_{10}\zeta_{01}\clo\clt$ becomes
\[
    \zeta_{11} \cltens = \zeta_{01}\clo + \zeta_{10}\clt
\]
\item $\zeta_{01} \cxltens = \zeta_{11}\cxlo + \zeta_{00}\clt
		- e^{-2}\kappa \zeta_{00}\zeta_{10}\zeta_{11}\cxlo\clt$ becomes
\[
    \zeta_{01} \cxltens = \zeta_{11}\cxlo + \zeta_{00}\clt
\]
\item $\zeta_{10} \cxltens = \zeta_{00}\clo + \zeta_{11}\cxlt
		- e^{-2}\kappa \zeta_{00}\zeta_{01}\zeta_{11}\clo\cxlt$ becomes
\[
    \zeta_{10} \cxltens = \zeta_{00}\clo + \zeta_{11}\cxlt
\]
\item $\cltens\cxltens =
		\clo\cxlo + \clt\cxlt
		+ \tau(\iota^{-2})\zeta_{00}^2\zeta_{01}\zeta_{10}\clo\clt$ becomes
\[
    \cltens\cxltens =
		\clo\cxlo + \clt\cxlt
		+ 2\zeta_{00}\zeta_{11}^{-1}\clo\clt
\]
\end{itemize}
The $\zeta_{ij}$ are invertible because $\prod \zeta_{ij} = \iota^2$ and $\iota$ is invertible,
so these relations now simply express
$\cxlo$, $\cxlt$, $\cltens$, and $\cxltens$ in terms of $\clo$ and $\clt$
(and many are redundant).
The result is then clear.
\end{proof}

This gives us part of what we need to verify our basis.

\begin{corollary}\label{cor:rho}
The composite
\[
    P^\gr\to H_\GG^\gr(B T^2_+) \xrightarrow{\rho}
        H_\rho^\gr(BT^2;\Z)
\]
takes a basis for $P^\gr$ over $\HS$
to a basis for $H_\rho^\gr(BT^2;\Z)$ over $H_\rho^{RO(\GG)}(S^0)$.
\end{corollary}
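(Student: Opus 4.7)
The plan is to factor the composite $P^\gr \to H_\GG^\gr(B T^2_+) \xrightarrow{\rho} H_\rho^\gr(BT^2;\Z)$ through the $\HS$-tensor product with $H_\rho^{RO(\GG)}(S^0;\Z)$. Since $\rho$ is an $\HS$-algebra map to a ring on which $\HS$ acts through its regrading to $H_\rho^{RO(\GG)}(S^0;\Z) \iso \Z[\iota^{\pm1}]$, there is a unique factorization
\[
    P^\gr \to P^\gr\tensorS H_\rho^{RO(\GG)}(S^0;\Z) \to H_\rho^\gr(BT^2;\Z).
\]
By Proposition~\ref{prop:freeness}, $P^\gr$ is free over $\HS$, so any $\HS$-basis $B$ of $P^\gr$ maps to an $H_\rho^{RO(\GG)}(S^0;\Z)$-basis of the middle term. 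The corollary will follow once I show the second arrow is an isomorphism.

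To do this, I would use Proposition~\ref{prop:rho iso} to identify the middle term with $\Z[\clo,\clt,\iota^{\pm1},\zeta_{ij}^{\pm1}]/\rels{\prod_{ij}\zeta_{ij}=\iota^2}$, and equation~(\ref{eqn:nonequivariant}) to identify the target with $\Z[x_1,x_2,\iota^{\pm1},\zeta_{ij}^{\pm1}]/\rels{\prod_{ij}\zeta_{ij}=\iota^2}$. Using the formulas for $\rho$ listed just above Proposition~\ref{prop:rho iso}, the second arrow sends $\zeta_{ij}\mapsto\zeta_{ij}$, $\iota\mapsto\iota$, $\clo\mapsto\zeta_{10}\zeta_{11}x_1$, and $\clt\mapsto\zeta_{01}\zeta_{11}x_2$. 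Because the $\zeta_{ij}$ are units in the target (the relation $\prod\zeta_{ij}=\iota^2$ forces this once $\iota$ is inverted), we can solve $x_1=\zeta_{10}^{-1}\zeta_{11}^{-1}\clo$ and $x_2=\zeta_{01}^{-1}\zeta_{11}^{-1}\clt$, so the map is surjective; and since both presentations are Laurent polynomial rings modulo the same relation, dimension-counting in each grading shows it is an isomorphism.

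Combining these steps, the composite carries the basis $B$ first to a basis of $P^\gr\tensorS H_\rho^{RO(\GG)}(S^0;\Z)$ (by freeness) and then to a basis of $H_\rho^\gr(BT^2;\Z)$ (by the isomorphism just established), as required. The only potential obstacle is the bookkeeping in matching the two presentations, but this reduces to recognizing that $\{\clo,\clt\}$ and $\{x_1,x_2\}$ generate the same polynomial subring once the $\zeta_{ij}$ are inverted, which is immediate from the explicit formulas for $\rho(\clo)$ and $\rho(\clt)$.
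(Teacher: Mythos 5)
Your proposal is correct and follows essentially the same route as the paper: factor the composite through $P^\gr\tensorS H_\rho^{RO(\GG)}(S^0;\Z)$, use Proposition~\ref{prop:rho iso} together with the presentation (\ref{eqn:nonequivariant}) and the formulas $\rho(\clo)=\zeta_{10}\zeta_{11}x_1$, $\rho(\clt)=\zeta_{01}\zeta_{11}x_2$ plus invertibility of the $\zeta_{ij}$ to see the second map is an isomorphism, and then conclude from the freeness of $P^\gr$ over $\HS$. The paper's proof is just a terser version of exactly this argument.
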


\begin{proof}
From the preceding proposition, calculation (\ref{eqn:nonequivariant}),
the fact that $\rho(\clo) = \zeta_{10}\zeta_{11}x_1$ and $\rho(\clt) = \zeta_{01}\zeta_{11}x_2$,
and the invertibility of the $\zeta_{ij}$, we get an isomorphism
\[
    P^\gr\tensorS H^{RO(\GG)}(S^0) \iso H^\gr(BT^2;\Z).
\]
The statement in the corollary now follows from the freeness of $P^\gr$.
\end{proof}

\subsection{Restriction to fixed points}\label{subsec:restrict fixed}
We now consider the map
\[
    \phi = (-)^\GG\colon H_\GG^\gr(B T^2_+)
    \to H_\phi^\gr((B T^2)^\GG_+;\Z)
\]
We expand a bit more on the grading on the right.
Recall that we have
\[
    (B T^2)^\GG = \Disjunion_{i,j} B^{ij}
\]
with each $B^{ij} \homeo BT^2$. We first think of $H^*((B T^2)^\GG_+;\Z)$ as
graded on $\Z^4$ via
\[
    H^a((B T^2)^\GG_+;\Z) = \Dirsum_{i,j} H^{a_{ij}}(BT^2_+;\Z)
\]
if $a = (a_{ij}) \in \Z^4$. We then have the fixed-point map
\[
    \phi\colon RO(\Pi BT^2) \to \Z^4
\]
given by
\[
    \textstyle \phi(a + b\sigma + \sum_{i,j} m_{ij}\Omega_{ij})
    = (a - 2m_{ij})_{i,j}.
\]
Thus, if $\alpha = a + b\sigma + \sum_{i,j} m_{ij}\Omega_{ij}$, then
\[
    H^\alpha_\phi((B T^2)^\GG_+;\Z) = \Dirsum_{i,j} H^{a - 2m_{ij}}(BT^2_+;\Z)
\]

If we now examine the $ij$th summand, we are regrading via a homomorphism
$RO(\Pi BT^2)\to \Z$ whose kernel has a basis given by $\sigma$ and
the $\Omega_{k\ell}$ with $(k,\ell)\neq (i,j)$.
There are corresponding invertible elements given by
$e = e^\GG$ and $\zeta_{ij} = \zeta_{ij}^\GG$.
This gives us the calculation
\begin{equation}\label{eqn:fixedpoints}
    H^\gr_\phi((B T^2)^\GG)_+;\Z) \iso
     \Dirsum_{i,j} \Z[x_1,x_2,e^{\pm 1},\zeta_{k\ell}^{\pm 1} \mid (k,\ell)\neq (i,j)]
\end{equation}
where
\[
    \grad x_1 = \grad x_2 = 2,\ \grad e = \sigma, \text{ and } \grad\zeta_{k\ell} = \Omega_{k\ell}.
\]

Recall from \cite[\S5]{CH:bu2} that, on regrading $H^\Z(S^0;\Z)$ via
$\phi\colon RO(\GG)\to \Z$, where $\phi(a+b\sigma) = a$, 
we get $H_\phi^{RO(\GG)}(S^0;\Z) \iso \Z[e^{\pm 1}]$.
This allows us to rewrite (\ref{eqn:fixedpoints}) as
\begin{equation}\label{eqn:fixedpoints2}
    H^\gr_\phi((B T^2)^\GG)_+;\Z) \iso
     \Dirsum_{i,j} H_\phi^{RO(\GG)}(S^0;\Z)[x_1,x_2,\zeta_{k\ell}^{\pm 1} \mid (k,\ell)\neq (i,j)]
\end{equation}

\begin{proposition}\label{prop:fixed point iso}
The map
\[
    P^\gr\tensorS H_\phi^{RO(\GG)}(S^0;\Z) \xrightarrow{\bar\phi} H_\phi^\gr((B T^2)^\GG)_+;\Z)
\]
induced by $\phi$ is an isomorphism of $H_\phi^{RO(\GG)}(S^0;\Z)$-modules.
\end{proposition}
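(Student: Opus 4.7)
The approach parallels the proof of Corollary~\ref{cor:rho}. Both sides of $\bar\phi$ are free over $H_\phi^{RO(\GG)}(S^0;\Z) \iso \Z[e^{\pm 1}]$: the source because $P^\gr$ is free over $\HS$ by Proposition~\ref{prop:freeness}, and the target by inspection of equation~(\ref{eqn:fixedpoints2}). The plan is to simplify the defining relations of $P^\gr$ after base change to $\Z[e^{\pm 1}]$ and then to match the resulting structure with the explicit polynomial description of the target summand by summand, using the restriction formulas of Proposition~\ref{prop:restrictions}.

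First I would record the images of the scalar elements appearing in the relations under $\phi\colon\HS\to\Z[e^{\pm 1}]$; the key values are $\phi(\xi)=e^2$, $\phi(\kappa)=0$, and $\phi(\tau(\iota^{-2}))=0$. Substituting these into the relations of Definition~\ref{def:P} collapses them in exactly the manner of Proposition~\ref{prop:rho iso}: the product relation becomes $\prod_{i,j}\zeta_{ij}=e^2$, every $e^{-2}\kappa$-correction drops out, and the last relation reduces to $\cltens\cxltens=\clo\cxlo+\clt\cxlt$.

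Next I would verify surjectivity onto each $(i,j)$-summand of (\ref{eqn:fixedpoints2}). By Proposition~\ref{prop:restrictions}, $\bar\phi(\zeta_{k\ell})=\zeta_{k\ell}$ on this summand whenever $(k,\ell)\neq(i,j)$, and these are units; combined with the simplified product relation and invertibility of $e$, the remaining $\bar\phi(\zeta_{ij})$ is automatically a unit there as well, matching the explicit formula of Proposition~\ref{prop:restrictions}. The same proposition expresses each of $\bar\phi(\clo)$ and $\bar\phi(\clt)$ as $x_1$ or $x_2$ times a $\zeta$-monomial (with, on some summands, an additive $e^2$-term that already lies in the image), so after multiplying by unit inverses one recovers $x_1$ and $x_2$, showing that the generators of each summand all lie in the image.

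For injectivity I would exploit the explicit $\HS$-basis of $P^\gr$ from Proposition~\ref{prop:freeness}; after tensoring with $\Z[e^{\pm 1}]$ it remains a $\Z[e^{\pm 1}]$-basis of the source. The main obstacle is that a single basis monomial on the left contributes nontrivially to all four summands on the right, so linear independence in the direct sum is not automatic from the individual summand formulas. My plan is to work grading by grading: fix $\alpha\in RO(\Pi BT^2)$, project onto one summand, normalize the $\bar\phi$-images by the now-invertible $\zeta_{k\ell}$-factors appearing in Proposition~\ref{prop:restrictions}, and compare the resulting leading monomials in $x_1,x_2$ against the polynomial basis of that summand. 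A direct rank count in each grading, matching the basis count from Proposition~\ref{prop:freeness} with the $\Z[e^{\pm 1}]$-rank of the target read off from equation~(\ref{eqn:fixedpoints2}), then completes the argument.
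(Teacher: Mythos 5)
Your proposed base change is computed with the wrong values, and this is fatal to the whole plan. The map $\phi$ here is restriction to fixed points, not restriction to the underlying nonequivariant theory, and it behaves very differently from the $\rho$ of Proposition~\ref{prop:rho iso}: one has $\phi(\xi)=0$ (not $e^2$), since $\xi$ lies in grading $2\sigma-2$ and $H_\phi^{2\sigma-2}(S^0;\Z)=H^{-2}(S^0;\Z)=0$, and $\phi(\kappa)=2$ (not $0$), since $\kappa=2-g$ and $g^\GG=0$; only $\phi(\tau(\iota^{-2}))=0$ is right. Consequently the relations do \emph{not} collapse in the manner of Proposition~\ref{prop:rho iso}. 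The product relation becomes $\prod_{i,j}\zeta_{ij}=0$, so the $\zeta_{ij}$ are emphatically not all invertible after base change; on the $(i,j)$th summand of (\ref{eqn:fixedpoints2}) the element $\bar\phi(\zeta_{ij})$ is $0$, not "automatically a unit" (see the displayed values $\bar\phi(\zeta_{00})=(0,\zeta_{00},\zeta_{00},\zeta_{00})$, etc.). Likewise the $e^{-2}\kappa$-corrections do not drop out ($1-\kappa\mapsto -1$), and $\bar\phi(\clo)$ has components $e^2\zeta^{\omega_1-2\sigma}$ with no $x_1$ at all on two of the four summands, again because $\xi\mapsto 0$ kills the $\xi x_1$ term. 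This vanishing of $\prod\zeta_{ij}$ is precisely what makes the target a product of four rings rather than a single localization, so your surjectivity argument, which rests on inverting everything, does not get off the ground.

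The substantive content you are missing is how to hit the four idempotents $(1,0,0,0),\dots,(0,0,0,1)$ of the target: these are what separate the summands, and they are not obtained by multiplying generators by unit inverses. The paper produces them explicitly as $e^{-4}\zeta^{\omega_1+\omega_2-4}\cxlo\cxlt$ and its three companions (using that, e.g., $\cxlo$ and $\cxlt$ each become $e^2$ times a unit on the $00$-summand and are divisible by a vanishing $\zeta$ elsewhere), and then builds an explicit inverse $\psi$ to $\bar\phi$ on algebra generators, checking well-definedness against the relations. Your injectivity-by-rank-count idea could in principle substitute for the inverse construction, but as sketched it is also nontrivial (the target's rank in grading $\alpha$ is $\sum_{i,j}\dim H^{a-2m_{ij}}(BT^2_+;\Z)$, which must be matched against the monomial count of Proposition~\ref{prop:freeness} grading by grading), and in any case it cannot rescue a surjectivity argument built on incorrect arithmetic.
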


\begin{proof}
We first record the value of $\bar\phi$ on various elements, where we write elements in the codomain
as 4-tuples, using (\ref{eqn:fixedpoints2}), in the order of indices $(00,01,10,11)$.
These values can be read off from the values of $\eta$ given in Proposition~\ref{prop:restrictions}.
\begin{align*}
    \bar\phi(\zeta_{00}) &= (0, \zeta_{00}, \zeta_{00}, \zeta_{00}) \\
    \bar\phi(\zeta_{01}) &= (\zeta_{01}, 0, \zeta_{01}, \zeta_{01}) \\
    \bar\phi(\zeta_{10}) &= (\zeta_{10}, \zeta_{10}, 0, \zeta_{10}) \\
    \bar\phi(\zeta_{11}) &= (\zeta_{11}, \zeta_{11}, \zeta_{11}, 0) \\
    \bar\phi(\clo) &= (\zeta^{\omega_1-2}x_1, \zeta^{\omega_1-2}x_1, 
                            e^2\zeta^{\omega_1-2\sigma}, e^2\zeta^{\omega_1-2\sigma}) \\
    \bar\phi(\cxlo) &= (e^2\zeta^{\chi\omega_1-2\sigma}, e^2\zeta^{\chi\omega_1-2\sigma},
                            \zeta^{\chi\omega_1-2}x_1, \zeta^{\chi\omega_1-2}x_1 ) \\
    \bar\phi(\clt) &= (\zeta^{\omega_2-2}x_2, e^2\zeta^{\omega_2-2\sigma},
                        \zeta^{\omega_2-2}x_2, e^2\zeta^{\omega_2-2\sigma}) \\
    \bar\phi(\cxlt) &= (e^2\zeta^{\chi\omega_2-2\sigma}, \zeta^{\chi\omega_2-2}x_2,
                        e^2\zeta^{\chi\omega_2-2\sigma}, \zeta^{\chi\omega_2-2}x_2 ) \\
    \bar\phi(\cltens) &= (\zeta^{\omega_1\tensor\omega_2-2}(x_1+x_2), e^2\zeta^{\omega_1\tensor\omega_2-2\sigma}, \\
            &\qquad\qquad e^2\zeta^{\omega_1\tensor\omega_2-2\sigma}, \zeta^{\omega_1\tensor\omega_2-2}(x_1+x_2) ) \\
    \bar\phi(\cxltens) &= (e^2\zeta^{\chi\omega_1\tensor\omega_2-2\sigma},
                            \zeta^{\chi\omega_1\tensor\omega_2-2}(x_1+x_2), \\
            &\qquad\qquad \zeta^{\chi\omega_1\tensor\omega_2-2}(x_1+x_2),
                            e^2\zeta^{\chi\omega_1\tensor\omega_2-2\sigma} )
\end{align*}
From these we get the following calculations.
\begin{align*}
    \bar\phi(e^{-2}\clo\cxlo) &= (x_1,x_1,x_1,x_1) \\
    \bar\phi(e^{-2}\clt\cxlt) &= (x_2,x_2,x_2,x_2) \\
    \bar\phi(e^{-4}\zeta^{\omega_1+\omega_2-4}\cxlo\cxlt) &= (1,0,0,0) \\
    \bar\phi(e^{-4}\zeta^{\omega_1+\chi\omega_2-4}\cxlo\clt) &= (0,1,0,0) \\
    \bar\phi(e^{-4}\zeta^{\chi\omega_1+\omega_2-4}\clo\cxlt) &= (0,0,1,0) \\
    \bar\phi(e^{-4}\zeta^{\chi\omega_1+\chi\omega_2-4}\clo\clt) &= (0,0,0,1) \\
    \bar\phi(e^{-4}\zeta^{\omega_2+\omega_1\tensor\omega_2-4-\Omega_{01}}\cxlt\cxltens) &= (\zeta_{01}^{-1},0,0,0) \\
    \bar\phi(e^{-4}\zeta^{\omega_1+\omega_1\tensor\omega_2-4-\Omega_{10}}\cxlo\cxltens) &= (\zeta_{10}^{-1},0,0,0) \\
    \bar\phi(e^{-4}\zeta^{\omega_1+\omega_2-4-\Omega_{11}}\cxlo\cxlt) &= (\zeta_{11}^{-1},0,0,0) \\
    \bar\phi(e^{-4}\zeta^{\chi\omega_2+\chi\omega_1\tensor\omega_2-4-\Omega_{00}}\clt\cltens) &= (0,\zeta_{00}^{-1},0,0) \\
\end{align*}
and similarly for the $\zeta_{ij}^{-1}$ appearing in the other possible positions.
We can now define an inverse $\psi$ to $\bar\phi$ as the unique algebra map with the following values:
\begin{align*}
    \psi(1,0,0,0) &= e^{-4}\zeta^{\omega_1+\omega_2-4}\cxlo\cxlt \\
    \psi(0,1,0,0) &= e^{-4}\zeta^{\omega_1+\chi\omega_2-4}\cxlo\clt \\
    \psi(0,0,1,0) &= e^{-4}\zeta^{\chi\omega_1+\omega_2-4}\clo\cxlt \\
    \psi(0,0,0,1) &= e^{-4}\zeta^{\chi\omega_1+\chi\omega_2-4}\clo\clt \\
    \psi(\zeta_{ij},0,0,0) &= \zeta_{ij}\psi(1,0,0,0), \text{ etc.} \\
    \psi(\zeta_{01}^{-1},0,0,0) &= e^{-4}\zeta^{\omega_2+\omega_1\tensor\omega_2-4-\Omega_{01}}\cxlt\cxltens, \text{ etc.} \\
    \psi(x_1,0,0,0) &= e^{-2}\clo\cxlo\psi(1,0,0,0), \text{ etc.}
\end{align*}
We can then check that $\psi$ is well-defined and is the inverse of $\bar\phi$, hence $\bar\phi$ is an isomorphism.
The checks are tedious but straightforward using the relations in $P^\gr\tensorS H^{RO(\GG)}_\phi(S^0;\Z)$.
\end{proof}

\begin{corollary}\label{cor:fixedpoints}
The composite
\[
    P^\gr \to H_\GG^\gr(B T^2_+)
    \xrightarrow{\phi} H_\phi^\gr((B T^2)^\GG_+;\Z)
\]
takes a basis for $P^\gr$ over $\HS$ to a basis for
$H_\phi^\gr((B T^2)^\GG_+;\Z)$ over $H_\phi^{RO(\GG)}(S^0;\Z)$.
\qed
\end{corollary}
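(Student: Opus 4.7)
The plan is to deduce this as an essentially formal consequence of Proposition~\ref{prop:fixed point iso} together with the freeness part of Proposition~\ref{prop:freeness}. The key observation is that the composite in question factors as
\[
    P^\gr \longrightarrow P^\gr \tensorS H_\phi^{RO(\GG)}(S^0;\Z) \xrightarrow{\ \bar\phi\ } H_\phi^\gr((BT^2)^\GG_+;\Z),
\]
where the first arrow is the unit map $p \mapsto p \tensor 1$ of the extension of scalars along $\HS \to H_\phi^{RO(\GG)}(S^0;\Z)$, and the second arrow is precisely the map $\bar\phi$ shown to be an isomorphism in Proposition~\ref{prop:fixed point iso}. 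This factorization is valid because $\bar\phi$ was defined by sending $p \tensor 1$ to the image of $p$ under $\phi$ composed with $P^\gr \to H_\GG^\gr(BT^2_+)$.

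First I would invoke Proposition~\ref{prop:freeness}, which asserts that $P^\gr$ is a free $\HS$-module. For any free module $F$ over a ring $R$ with basis $B$ and any ring map $R \to R'$, the set $\{b \tensor 1 : b \in B\}$ is a basis for $F \tensor_R R'$ over $R'$. Applying this with $R = \HS$, $R' = H_\phi^{RO(\GG)}(S^0;\Z)$, and $F = P^\gr$, a basis of $P^\gr$ over $\HS$ gives a basis of $P^\gr \tensorS H_\phi^{RO(\GG)}(S^0;\Z)$ over $H_\phi^{RO(\GG)}(S^0;\Z)$ in the obvious way.

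Then I would apply Proposition~\ref{prop:fixed point iso}: since $\bar\phi$ is an isomorphism of $H_\phi^{RO(\GG)}(S^0;\Z)$-modules, it sends the basis just produced to a basis of $H_\phi^\gr((BT^2)^\GG_+;\Z)$. Tracing through the factorization, this is exactly the image of the original basis of $P^\gr$ under the composite in the statement of the corollary, which completes the argument.

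There is really no obstacle here; the whole content of the corollary is already packaged in Proposition~\ref{prop:fixed point iso}, and the freeness statement from Proposition~\ref{prop:freeness} is what allows a basis to be transported cleanly through the extension of scalars. The corollary is essentially a bookkeeping step preparing for the application of \cite[Proposition~6.1]{CH:bu2}, which compares bases via the restrictions $\rho$ and $\phi$.
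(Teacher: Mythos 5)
Your argument is correct and is exactly the one the paper intends: the corollary is stated with an immediate \qed, being treated as a direct consequence of Proposition~\ref{prop:fixed point iso} together with the freeness of $P^\gr$ over $\HS$ from Proposition~\ref{prop:freeness}, just as in the parallel Corollary~\ref{cor:rho}. Your write-up simply makes explicit the factorization through $P^\gr \tensorS H_\phi^{RO(\GG)}(S^0;\Z)$ and the transport of a basis under extension of scalars, which is precisely the intended reasoning.
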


\subsection{The cohomology of $B T^2$}
We can now prove Theorem~\ref{thm:A}, which we restate with more details as follows.

\begin{theorem}\label{thm:main}
$ H_\GG^\gr(B T^2_+)$ is generated as an algebra over $\HS$
by the elements $\zeta_{ij}$, $\clo$, $\cxlo$, $\clt$, $\cxlt$, $\cltens$, and $\cxltens$, modulo the relations
\begin{itemize}
\item $\prod_{i,j} \zeta_{ij} = \xi$
\item $\zeta_{10}\zeta_{11}\cxlo = (1-\kappa) \zeta_{00}\zeta_{01}\clo + e^2$
\item $\zeta_{01}\zeta_{11}\cxlt = (1-\kappa) \zeta_{00}\zeta_{10}\clt + e^2$
\item $\zeta_{00} \cltens = \zeta_{10}\cxlo + \zeta_{01}\cxlt
		- e^{-2}\kappa \zeta_{01}\zeta_{10}\zeta_{11}\cxlo\cxlt$
\item $\zeta_{11} \cltens = \zeta_{01}\clo + \zeta_{10}\clt
		- e^{-2}\kappa \zeta_{00}\zeta_{01}\zeta_{10}\clo\clt$
\item $\zeta_{01} \cxltens = \zeta_{11}\cxlo + \zeta_{00}\clt
		- e^{-2}\kappa \zeta_{00}\zeta_{10}\zeta_{11}\cxlo\clt$
\item $\zeta_{10} \cxltens = \zeta_{00}\clo + \zeta_{11}\cxlt
		- e^{-2}\kappa \zeta_{00}\zeta_{01}\zeta_{11}\clo\cxlt$
\item $\cltens\cxltens =
		\clo\cxlo + \clt\cxlt
		+ \tau(\iota^{-2})\zeta_{00}^2\zeta_{01}\zeta_{10}\clo\clt$.
\end{itemize}
It is a free module over $\HS$, with the basis given in
Proposition~\ref{prop:freeness}.
It is also a free module over $H_\GG^{RO(\Pi BT^1)}(B T^1_+)$, the action given by
projection to either factor, with the basis in the case of the first projection given in Proposition~\ref{prop:freeness}.
\end{theorem}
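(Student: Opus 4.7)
The plan is to assemble the pieces that have already been put in place. First, Corollary~\ref{cor:relations} shows that all the defining relations of $P^\gr$ actually hold in $H_\GG^\gr(BT^2_+)$, so sending each generator of $P^\gr$ to the element of the same name in $H_\GG^\gr(BT^2_+)$ determines a well-defined ring homomorphism
\[
    \Phi\colon P^\gr \longrightarrow H_\GG^\gr(BT^2_+).
\]
Fix a basis $\mathcal{B}$ for $P^\gr$ over $\HS$ as described in Proposition~\ref{prop:freeness}. The goal is to prove that $\Phi(\mathcal{B})$ is an $\HS$-basis for $H_\GG^\gr(BT^2_+)$; this will simultaneously prove that $\Phi$ is an isomorphism of rings and that $H_\GG^\gr(BT^2_+)$ is a free $\HS$-module with the advertised basis.

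To prove that $\Phi(\mathcal{B})$ is a basis, I would apply \cite[Proposition~6.1]{CH:bu2}, which reduces the question to checking that the images of $\mathcal{B}$ form bases after restricting along the two natural maps
\[
    \rho\colon H_\GG^\gr(BT^2_+) \to H_\rho^\gr(BT^2_+;\Z)
    \quad\text{and}\quad
    \phi\colon H_\GG^\gr(BT^2_+) \to H_\phi^\gr((BT^2)^\GG_+;\Z),
\]
viewed as modules over $H_\rho^{RO(\GG)}(S^0;\Z)$ and $H_\phi^{RO(\GG)}(S^0;\Z)$ respectively. These are exactly the conclusions of Corollary~\ref{cor:rho} and Corollary~\ref{cor:fixedpoints} applied to the basis $\mathcal{B}$ of $P^\gr$. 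Once this is verified, $\Phi$ is a degreewise isomorphism of $\HS$-modules, and since it is already a ring map, it is an isomorphism of algebras. This delivers both the presentation in the statement and the $\HS$-freeness with the basis of Proposition~\ref{prop:freeness}.

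For the last sentence of the theorem, I would use the ring map $H_\GG^{RO(\Pi BT^1)}(BT^1_+) \to P^\gr$ already described just after Definition~\ref{def:P}, which identifies with the pullback along either of the two projections $BT^2 \to BT^1$. Proposition~\ref{prop:freeness} explicitly exhibits a basis for $P^\gr$ as a module over $H_\GG^{RO(\Pi BT^1)}(BT^1_+)$ via the first projection, and transporting this statement through the isomorphism $\Phi$ yields the claimed freeness of $H_\GG^\gr(BT^2_+)$ over $H_\GG^{RO(\Pi BT^1)}(BT^1_+)$; the case of the second projection follows by the symmetry that interchanges $\omega_1$ with $\omega_2$, $\zeta_{01}$ with $\zeta_{10}$, and the corresponding Euler class generators.

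The main obstacle, which is why this is the terminal step of a long chain, has already been confronted elsewhere: the delicate part is Proposition~\ref{prop:freeness} (requiring the diamond-lemma calculation) together with the bookkeeping of Propositions~\ref{prop:rho iso} and~\ref{prop:fixed point iso}, where one must certify that the various combinations of $\zeta_{ij}$, $\clo$, $\cxlo$, $\clt$, $\cxlt$, $\cltens$, $\cxltens$ really do hit idempotent decomposition of the fixed-point sum. Given those, the proof of Theorem~\ref{thm:main} itself is essentially a one-line invocation of \cite[Proposition~6.1]{CH:bu2}.
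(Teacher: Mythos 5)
Your proposal matches the paper's proof of Theorem~\ref{thm:main} almost exactly: both use the ring map $P^\gr\to H_\GG^\gr(BT^2_+)$ furnished by Corollary~\ref{cor:relations}, invoke Corollaries~\ref{cor:rho} and~\ref{cor:fixedpoints} together with \cite[Proposition~6.1]{CH:bu2} to see that a basis of $P^\gr$ maps to a basis, and then read off the presentation and the freeness statements from Definition~\ref{def:P} and Proposition~\ref{prop:freeness}. The proposal is correct and takes essentially the same approach as the paper, merely spelling out the module-transport and symmetry remarks for the $H_\GG^{RO(\Pi BT^1)}(BT^1_+)$-freeness a bit more explicitly.
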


\begin{proof}
Corollaries~\ref{cor:rho} and~\ref{cor:fixedpoints}, 
together with \cite[Proposition~6.1]{CH:bu2},
imply that a basis of $ P^\gr$ over $\HS$ is taken to a basis
of $ H_\GG^\gr(B T^2_+)$,
hence the ring map $ P^\gr\to  H_\GG^\gr(B T^2_+)$
is an isomorphism.
The theorem then follows from the definition of $ P^\gr$
and Proposition~\ref{prop:freeness}.
\end{proof}

\section{Examples of bases}\label{sec:bases}

As we did in \cite{CH:bu2} for $BU(2)$, we now give the locations of some of the basis elements
of the cohomology of $BT^2$.
The specific basis we use is the $\HS$-basis given by Proposition~\ref{prop:freeness},
but the locations of the elements would not change if we used a different basis.
We look at ``$RO(\GG)$ pages,'' that is, 
at the groups graded by cosets $\alpha+RO(\GG)$
for fixed $\alpha\in RO(\Pi BT^2)$.

As the first example, we consider the $RO(\GG)$ grading itself, which we think of as the coset
of gradings $0 + RO(\GG)$. 
Below, we will also list the fixed sets of the basic elements, using the map
\[
    (-)^\GG\colon H_\GG^\gr(BT^2_+) \to H^{\Z^4}((BT^2)^\GG_+;\Z),
\]
grading the target on $\Z^4$ rather than regrading on $RO(\Pi BT^2)$.
(See \S\ref{subsec:restrict fixed} for more details.)

The beginning of the list of basic elements in the $RO(\GG)$ grading
is as follows, along with the grading of each and its fixed sets.
(This list can be derived from Proposition~\ref{prop:freeness} or from
Proposition~\ref{prop:kunneth}, which also applies in this range of gradings.)
\begin{align*}
    &x && \grad x && x^\GG \\
    &1 && 0 && (1,1,1,1) \\
    &\zeta_{00}\zeta_{01}\clo && 2\sigma && (0,0,1,1) \\
    &\zeta_{00}\zeta_{10}\clt && 2\sigma && (0,1,0,1) \\
    &\zeta_{00}^2\zeta_{01}\zeta_{10}\clo\clt && 4\sigma && (0,0,0,1) \\
    &\clo\cxlo && 2 + 2\sigma && (x_1,x_1,x_1,x_1) \\
    &\clt\cxlt && 2 + 2\sigma && (x_2,x_2,x_2,x_2) \\
    &\zeta_{00}\zeta_{01}\clo^2\cxlo && 2 + 4\sigma && (0,0,x_1,x_1) \\
    &\zeta_{00}\zeta_{10}\clo\cxlo\clt && 2 + 4\sigma && (0,x_1,0,x_1) \\
    &\zeta_{00}\zeta_{01}\clo\clt\cxlt && 2 + 4\sigma && (0,0,x_2,x_2) \\
    &\zeta_{00}\zeta_{10}\clt^2\cxlt && 2 + 4\sigma && (0,x_2,0,x_2) \\
    &\zeta_{00}^2\zeta_{01}\zeta_{10}\clo^2\cxlo\clt && 2 + 6\sigma && (0,0,0,x_1) \\
    &\zeta_{00}^2\zeta_{01}\zeta_{10}\clo\clt^2\cxlt && 2 + 6\sigma && (0,0,0,x_2)
\end{align*}
We draw the locations of these basis elements (and more) on a grid in which the location $a + b\sigma\in RO(\GG)$
is shown at point $(a,b)$. Because $a$ and $b$ are always even, the spacing of the grid lines is every 2, not 1.
The numbers in each circle represent the number of basis elements in that grading.
\begin{center}
\begin{tikzpicture}[scale=0.4] 
    \draw[step=1cm,gray,very thin] (-0.9,-0.9) grid (4.9,6.9);
    \draw[thick] (-1,0) -- (5,0);
    \draw[thick] (0,-1) -- (0,7);
    \node[right] at (5,0) {$a$};
    \node[above] at (0,7) {$b\sigma$};
    \node[below] at (2,-1) {$RO(\GG)$};

    \node[fill=white,scale=0.7] at (0,0) {1}; \draw (0,0) circle(0.38cm);
    \node[fill=white,scale=0.7] at (0,1) {2}; \draw (0,1) circle(0.38cm);
    \node[fill=white,scale=0.7] at (0,2) {1}; \draw (0,2) circle(0.38cm);

    \node[fill=white,scale=0.7] at (1,1) {2}; \draw (1,1) circle(0.38cm);
    \node[fill=white,scale=0.7] at (1,2) {4}; \draw (1,2) circle(0.38cm);
    \node[fill=white,scale=0.7] at (1,3) {2}; \draw (1,3) circle(0.38cm);

    \node[fill=white,scale=0.7] at (2,2) {3}; \draw (2,2) circle(0.38cm);
    \node[fill=white,scale=0.7] at (2,3) {6}; \draw (2,3) circle(0.38cm);
    \node[fill=white,scale=0.7] at (2,4) {3}; \draw (2,4) circle(0.38cm);

    \node[fill=white,scale=0.7] at (3,3) {4}; \draw (3,3) circle(0.38cm);
    \node[fill=white,scale=0.7] at (3,4) {8}; \draw (3,4) circle(0.38cm);
    \node[fill=white,scale=0.7] at (3,5) {4}; \draw (3,5) circle(0.38cm);

    \node[fill=white,scale=0.8,rotate=45] at (4,4) {\dots};
    \node[fill=white,scale=0.8,rotate=45] at (4,5) {\dots};
    \node[fill=white,scale=0.8,rotate=45] at (4,6) {\dots};

\end{tikzpicture}
\end{center}
Notice that, if we reduce each basis element using the non-regraded map
\[
    \rho\colon H_\GG^{RO(\GG)}(BT^2_+) \to H^\Z(BT^2_+;\Z),
\]
we get the familiar basis (shown in the same order as the elements above)
\[
    \{ 1, x_1, x_2, x_1x_2, x_1^2, x_2^2, x_1^3, x_1^2x_2, x_1x_2^2, x_2^3, x_1^3x_2, x_1x_2^3, \dots \}.
\]
The equivariant basis elements
that appear on the diagonal line of gradings $a+b\sigma$ with $a + b = n$
are the ones that reduce to nonequivariant basis elements
with total degree $n$.
We can illustrate the action of $\rho$ as follows, with the lower horizontal
line representing the nonequivariant cohomology of $BT^2$.
\begin{center}
\begin{tikzpicture}[scale=0.4] 

    \draw[step=1cm,gray,very thin] (-0.9,-0.9) grid (4.9,6.9);
    \draw[thick] (-1,0) -- (5,0);
    \draw[thick] (0,-1) -- (0,7);

    \node[fill=white,scale=0.7] at (0,0) {1}; \draw (0,0) circle(0.38cm);
    \node[fill=white,scale=0.7] at (0,1) {2}; \draw (0,1) circle(0.38cm);
    \node[fill=white,scale=0.7] at (0,2) {1}; \draw (0,2) circle(0.38cm);

    \node[fill=white,scale=0.7] at (1,1) {2}; \draw (1,1) circle(0.38cm);
    \node[fill=white,scale=0.7] at (1,2) {4}; \draw (1,2) circle(0.38cm);
    \node[fill=white,scale=0.7] at (1,3) {2}; \draw (1,3) circle(0.38cm);

    \node[fill=white,scale=0.7] at (2,2) {3}; \draw (2,2) circle(0.38cm);
    \node[fill=white,scale=0.7] at (2,3) {6}; \draw (2,3) circle(0.38cm);
    \node[fill=white,scale=0.7] at (2,4) {3}; \draw (2,4) circle(0.38cm);

    \node[fill=white,scale=0.7] at (3,3) {4}; \draw (3,3) circle(0.38cm);
    \node[fill=white,scale=0.7] at (3,4) {8}; \draw (3,4) circle(0.38cm);
    \node[fill=white,scale=0.7] at (3,5) {4}; \draw (3,5) circle(0.38cm);

    \node[fill=white,scale=0.8,rotate=45] at (4,4) {\dots};
    \node[fill=white,scale=0.8,rotate=45] at (4,5) {\dots};
    \node[fill=white,scale=0.8,rotate=45] at (4,6) {\dots};

    \draw[thick] (2,-3) -- (11,-3);
    \node[fill=white,scale=0.7] at (3,-3) {1}; \draw (3,-3) circle(0.38cm);
    \node[fill=white,scale=0.7] at (4,-3) {2}; \draw (4,-3) circle(0.38cm);
    \node[fill=white,scale=0.7] at (5,-3) {3}; \draw (5,-3) circle(0.38cm);
    \node[fill=white,scale=0.7] at (6,-3) {4}; \draw (6,-3) circle(0.38cm);
    \node[fill=white,scale=0.7] at (7,-3) {5}; \draw (7,-3) circle(0.38cm);
    \node[fill=white,scale=0.7] at (8,-3) {6}; \draw (8,-3) circle(0.38cm);
    \node[fill=white,scale=0.7] at (9,-3) {7}; \draw (9,-3) circle(0.38cm);
    \node[fill=white,scale=0.7] at (10,-3) {8}; \draw (10,-3) circle(0.38cm);

    \draw[->,gray] (-0.5,0.5) -- (2.5,-2.5);
    \draw[->,gray] (-0.5,1.5) -- (3.5,-2.5);
    \draw[->,gray] (-0.5,2.5) -- (4.5,-2.5);
    \draw[->,gray] (-0.5,3.5) -- (5.5,-2.5);
    \draw[->,gray] (-0.5,4.5) -- (6.5,-2.5);
    \draw[->,gray] (-0.5,5.5) -- (7.5,-2.5);
    \draw[->,gray] (-0.5,6.5) -- (8.5,-2.5);
    \draw[->,gray] (-0.5,7.5) -- (9.5,-2.5);
    \node[left] at (1.7,-2) {$\rho$};
    \node[right] at (11,-3) {\dots};

\end{tikzpicture}
\end{center}

The fixed sets listed above illustrate another feature of this basis. The basis elements in any vertical line
restrict to a basis for the fixed sets in a given grading, as shown in Corollary~\ref{cor:fixedpoints}.
For example, the eight basis elements in gradings of the form
$2 + b\sigma$ have fixed sets that give a basis for
\[
    H^2((BT^2)^\GG_+;\Z) \iso H^2(BT^2_+;\Z)\dirsum H^2(BT^2_+;\Z)\dirsum H^2(BT^2_+;\Z)\dirsum H^2(BT^2_+;\Z).
\]
A more familiar basis for this group would be
\begin{multline*}
    \{ (x_1,0,0,0), (x_2,0,0,0), (0,x_1,0,0), (0,x_2,0,0), \\
        (0,0,x_1,0), (0,0,x_2,0), (0,0,0,x_1), (0,0,0,x_2) \},
\end{multline*}
but it's not hard to see that the fixed sets shown in the list above form another basis.

We give a second example, from outside the range covered by Proposition~\ref{prop:kunneth}.
Consider the gradings $\omega_1\tensor\omega_2 + RO(\GG) = \Omega_{01} + \Omega_{10} + RO(\GG)$.
Here is the beginning of the list of basis elements in this coset.
\begin{align*}
    &x && \grad x = \Omega_{01} + \Omega_{10} + {} && x^\GG \\
    &\zeta_{01}\zeta_{10} && 0 && (1,0,0,1) \\
    &\zeta_{00}\zeta_{01}^2\zeta_{10}\clo && 2\sigma && (0,0,0,1) \\
    &\cltens && 2 && (x_1+x_2,1,1,x_1+x_2) \\
    &\zeta_{01}\zeta_{10}\clo\cxlo && 2 + 2\sigma && (x_1,0,0,x_1) \\
    &\zeta_{01}\zeta_{10}\clt\cxlt && 2 + 2\sigma && (x_2,0,0,x_2) \\
    &\zeta_{00}\zeta_{01}\clo\cltens && 2 + 2\sigma && (0,0,1,x_1+x_2) \\
    &\zeta_{00}\zeta_{01}^2\zeta_{10}\clo^2\cxlo && 2 + 4\sigma && (0,0,0,x_1) \\
    &\zeta_{00}\zeta_{01}^2\zeta_{10}\clo\clt\cxlt && 2 + 4\sigma && (0,0,0,x_2)
\end{align*}
The basis elements are arranged as follows:
\begin{center}
\begin{tikzpicture}[scale=0.4] 
    \draw[step=1cm,gray,very thin] (-0.9,-0.9) grid (4.9,5.9);
    \draw[thick] (-1,0) -- (5,0);
    \draw[thick] (0,-1) -- (0,6);
    \node[right] at (5,0) {$a$};
    \node[above] at (0,6) {$b\sigma$};
    \node[below] at (2,-1) {$\Omega_{01} + \Omega_{10} + RO(\GG)$};

    \node[fill=white,scale=0.7] at (0,0) {1}; \draw (0,0) circle(0.38cm);
    \node[fill=white,scale=0.7] at (0,1) {1}; \draw (0,1) circle(0.38cm);

    \node[fill=white,scale=0.7] at (1,0) {1}; \draw (1,0) circle(0.38cm);
    \node[fill=white,scale=0.7] at (1,1) {3}; \draw (1,1) circle(0.38cm);
    \node[fill=white,scale=0.7] at (1,2) {2}; \draw (1,2) circle(0.38cm);

    \node[fill=white,scale=0.7] at (2,1) {2}; \draw (2,1) circle(0.38cm);
    \node[fill=white,scale=0.7] at (2,2) {5}; \draw (2,2) circle(0.38cm);
    \node[fill=white,scale=0.7] at (2,3) {3}; \draw (2,3) circle(0.38cm);

    \node[fill=white,scale=0.7] at (3,2) {3}; \draw (3,2) circle(0.38cm);
    \node[fill=white,scale=0.7] at (3,3) {7}; \draw (3,3) circle(0.38cm);
    \node[fill=white,scale=0.7] at (3,4) {4}; \draw (3,4) circle(0.38cm);

    \node[fill=white,scale=0.8,rotate=45] at (4,3) {\dots};
    \node[fill=white,scale=0.8,rotate=45] at (4,4) {\dots};
    \node[fill=white,scale=0.8,rotate=45] at (4,5) {\dots};

\end{tikzpicture}
\end{center}
The number of elements on the diagonal lines where $a+b$ is constant is the same as before, 
but the number on vertical lines
is different. This time, the fixed-set map has the form
\begin{multline*}
    (-)^\GG\colon H_\GG^{\Omega_{01} + \Omega_{10}+a+b\sigma}(BT^2_+) \to \\
    H^{a}(BT^2_+;\Z) \dirsum H^{a-2}(BT^2_+;\Z) \dirsum H^{a-2}(BT^2_+;\Z) \dirsum H^{a}(BT^2_+;\Z).
\end{multline*}
So, for example, the six basis elements in gradings of the form $\Omega_{01} + \Omega_{10}+2+b\sigma$ give a basis of
\[
    H^{2}(BT^2_+;\Z) \dirsum H^{0}(BT^2_+;\Z) \dirsum H^{0}(BT^2_+;\Z) \dirsum H^{2}(BT^2_+;\Z),
\]
for which a more familiar basis would be
\[
    \{ (x_1,0,0,0), (x_2,0,0,0), (0,1,0,0), (0,0,1,0), (0,0,0,x_1), (0,0,0,x_2) \}.
\]

Cosets of $RO(\GG)$ farther away from the 0 coset will give more exotic arrangements of basis elements.

\section{Units and dual elements}\label{sec:units}

We now determine the group of units of $H_\GG^\gr(BT^2_+)$.
We begin by introducing the following elements. 
(We used $\epsilon_1$ already in the proof of Proposition~\ref{prop:freeness}.)

\begin{definition}
Let
\begin{align*}
    \epsilon_1 &= e^{-2}\kappa\zeta_{00}\zeta_{01}\clo \\
    \epsilon_2 &= e^{-2}\kappa\zeta_{00}\zeta_{10}\clt \\
\intertext{and}
    \epsp &= e^{-4}\kappa\zeta_{00}^2\zeta_{01}\zeta_{10}\clo\clt.
\end{align*}
\end{definition}

(The notation for $\epsp$ indicates that it is based on $e(\omega_1\dirsum\omega_2) = \clo\clt$.)

From the basis for $H_\GG^{RO(\GG)}(BT^2_+)$ we gave in \S\ref{sec:bases}, 
we can see that $\{1,g,\epsilon_1,\epsilon_2,\epsp \}$ is a $\Z$-basis
for $H_\GG^0(BT^2_+)$.

\begin{proposition}\label{prop:units}
The elements $1-\kappa$, $1-\epsilon_1$, $1-\epsilon_2$, and $1-\epsp $
all square to $1$, hence are units.
There are exactly 32 units of $H_\GG^\gr(BT^2_+)$, given by
\[
    \pm(1-\kappa)^{a_0}(1-\epsilon_1)^{a_1}(1-\epsilon_2)^{a_2}(1-\epsp )^{a_3}
\]
where $a_i\in \{0,1\}$ for each $i$.
\end{proposition}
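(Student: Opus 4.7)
The plan is to verify the squaring identities by restriction to fixed points, and then to classify all units by combining the nonequivariant restriction $\rho$ with the fixed-point restriction $\phi$. For the squaring identities, each of $\kappa,\epsilon_1,\epsilon_2,\epsp$ lies in grading $0\in\ROev(\Pi BT^2)$, so by \cite[Corollary 3.5]{CH:bu2} the restriction $\eta$ is injective here; a direct computation using Proposition~\ref{prop:restrictions} and the standard relation $\kappa\xi=0$ in $\HS$ yields $\eta(\kappa)=(\kappa,\kappa,\kappa,\kappa)$, $\eta(\epsilon_1)=(0,0,\kappa,\kappa)$, $\eta(\epsilon_2)=(0,\kappa,0,\kappa)$, and $\eta(\epsp)=(0,0,0,\kappa)$, with each entry lying in the summand $\HS^0 \subset H_\GG^0(B^{ij}_+)$. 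Since $\kappa^2=2\kappa$, each image $v$ satisfies $v^2=2v$ componentwise, so $(1-v)^2=1$, establishing the four squaring identities.

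Reducing further under $\phi$ (which sends $\kappa$ to $2$ and $\xi$ to $0$) yields $\phi(1-\kappa)=(-1,-1,-1,-1)$, $\phi(1-\epsilon_1)=(1,1,-1,-1)$, $\phi(1-\epsilon_2)=(1,-1,1,-1)$, and $\phi(1-\epsp)=(1,1,1,-1)$. These four sign vectors are linearly independent in $(\Z/2)^4$, so the $16$ products $\prod(1-\cdot)^{a_i}$ realize all $16$ sign patterns in $\{\pm 1\}^4$. Since $\rho$ kills each of $\kappa,\epsilon_1,\epsilon_2,\epsp$, each product has $\rho$-image $1$, and hence the $32$ elements $\pm\prod(1-\cdot)^{a_i}$ map under $(\rho,\phi)$ bijectively onto $\{\pm 1\}\times\{\pm 1\}^4$; in particular they are pairwise distinct units.

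The hard step is showing every unit must have grading $0$. For a unit $u$ of grading $\alpha$, each component $\phi(u)_{ij}$ is a unit in the polynomial ring $R_{ij}=\Z[x_1,x_2,e^{\pm 1},\zeta_{k\ell}^{\pm 1}\mid(k,\ell)\neq(i,j)]$; since $x_1,x_2$ are not invertible, such a unit is of the form $\pm e^{n}\prod\zeta_{k\ell}^{p_{k\ell}}$, and its grading therefore lies in the subgroup $L_{ij}:=\Z\sigma\oplus\bigoplus_{(k,\ell)\neq(i,j)}\Z\Omega_{k\ell}$ of $RO(\Pi BT^2)$. Using the relation $\sum\Omega=2\sigma-2$, a short calculation gives $\bigcap_{ij}L_{ij}=\Z\sigma$, so $\alpha=b\sigma$ for some $b\in\Z$. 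Finally, $\rho(u)$ is a unit of grading $b\sigma$ in the regraded nonequivariant cohomology~\eqref{eqn:nonequivariant}, whose units are $\pm\iota^p\prod\zeta^q$ of grading $p(\sigma-1)+\sum q_{k\ell}\Omega_{k\ell}$; matching gradings forces $p=0=b$, whence $\alpha=0$.

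To conclude, using the $\Z$-basis $\{1,g,\epsilon_1,\epsilon_2,\epsp\}$ for $H_\GG^0(BT^2_+)$ stated just before the proposition together with $\rho(g)=2$, $\phi(g)=0$, and the values above, a direct $5\times 5$ linear-algebra check shows that $(\rho,\phi)\colon H_\GG^0(BT^2_+)\to\Z\dirsum\Z^4$ has nonzero determinant and is therefore injective. Any unit $u$ in grading $0$ satisfies $(\rho(u),\phi(u))\in\{\pm 1\}\times\{\pm 1\}^4$ (units in each $R_{ij}$ in grading $0$ are $\pm 1$); this gives $32$ possible values, each attained by a unique one of the $32$ listed units, and injectivity identifies $u$ with that unit. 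The main obstacle is the grading-$0$ step, where the grading constraints from all four fixed-point summands must be carefully combined with the nonequivariant restriction.
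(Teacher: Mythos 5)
Your proof is correct, and it takes a genuinely different route from the paper's for the main classification step. The paper works entirely inside the grading-zero ring: it records the multiplication table on the $\Z$-basis $\{1,g,\epsilon_1,\epsilon_2,\epsp\}$ (namely $g\epsilon_i=0$, $\epsilon_i^2=2\epsilon_i$, $\epsilon_i\epsilon_j=2\epsp$, the last verified using the defining relations and $e^{-2}\kappa\cdot\xi=0$), deduces the squaring identities from $\kappa^2=2\kappa$, and then classifies units by writing $uv=1$ in coordinates and solving the resulting integer equations case by case. You instead push everything through the restriction maps: you verify the squares via $\eta$ and the componentwise identity $v^2=2v$, and you classify units by showing $(\rho,\phi)$ is injective on $H_\GG^0(BT^2_+)$ (a $5\times 5$ determinant) and then counting: units land in $\{\pm 1\}\times\{\pm 1\}^4$, and the $\Z/2$-independence of the four sign vectors $\phi(1-\kappa)$, $\phi(1-\epsilon_1)$, $\phi(1-\epsilon_2)$, $\phi(1-\epsp)$ shows the $32$ listed elements exhaust all $32$ possible images. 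Your approach trades the paper's explicit Diophantine case analysis for linear algebra over $\Z$ and $\Z/2$, which is arguably cleaner and scales better; its cost is that you must trust (or recompute) the restriction values of $\epsilon_1,\epsilon_2,\epsp$, including that the grading-zero components are exactly $\kappa$ (respectively $2$) with no extraneous unit factor. You also supply an actual argument for the reduction to grading zero --- the lattice computation $\bigcap_{ij}L_{ij}=\Z\sigma$ followed by the $\rho$-constraint forcing $b=0$ --- where the paper only cites the analogous argument from \cite[Proposition~8.1]{CH:bu2}; that is a welcome addition and appears to be a faithful reconstruction of the cited argument.
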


\begin{proof}
The argument given in \cite[Proposition~8.1]{CH:bu2} can be applied here to show that any
units of $H_\GG^\gr(BT^2_+)$ must lie in grading 0, where we have the $\Z$-basis given just
before this proposition.
To do calculations in that grading, we note the following products of the basis elements:
\begin{align*}
    g\epsilon_1 &= g\epsilon_2 = g\epsp  = 0 \\
    \epsilon_1^2 &= 2\epsilon_1 \\
    \epsilon_2^2 &= 2\epsilon_2 \\
    \epsp ^2 &= 2\epsp  \\
    \epsilon_1\epsilon_2 &= \epsilon_1\epsp  = \epsilon_2\epsp  = 2\epsp 
\end{align*}
The vanishing of the products with $g$ follows from the fact that $g\cdot e^{-2}\kappa = 0$.
The equations for $\epsilon_1^2$ and $\epsilon_2^2$ follow from the similar fact
about $e^{-2}\kappa\zeta_0\cw$ in the cohomology of $BT^1$ shown in 
\cite[Proposition~11.5]{Co:InfinitePublished}, or can be checked directly from
the relations in the cohomology of $BT^2$. 
For $\epsp ^2$, we calculate
\begin{align*}
    \epsp ^2
    &= (e^{-4}\kappa\zeta_{00}^2\zeta_{01}\zeta_{10}\clo\clt)^2 \\
    &= 2e^{-8}\kappa\zeta_{00}^2\zeta_{01}\zeta_{10}\clo\clt \cdot \zeta_{00}\zeta_{01}\clo \cdot \zeta_{00}\zeta_{10}\clt \\
    &= 2e^{-8}\kappa\zeta_{00}^2\zeta_{01}\zeta_{10}\clo\clt
        [(1-\kappa)\zeta_{10}\zeta_{11}\cxlo + e^2][(1-\kappa)\zeta_{01}\zeta_{11}\cxlt + e^2] \\
    &= 2e^{-4}\kappa\zeta_{00}^2\zeta_{01}\zeta_{10}\clo\clt \\
    &= 2\epsp ,
\end{align*}
using, as usual, the fact that $e^{-2}\kappa \cdot\xi = 0$. The remaining calculations are similar.

With the fact that $\kappa^2 = 2\kappa$, it now follows that each of $1-\kappa$, $1-\epsilon_1$, $1-\epsilon_2$,
and $1-\epsp $ squares to 1.

To see that there are no units other than the ones listed in the proposition, and that those are distinct,
we consider general elements
of $H_\GG^0(BT^2_+)$, which can be written uniquely in the form
\[
    u = a_1 + a_2g + a_3\epsilon_1 + a_4\epsilon_2 + a_5\epsp 
\]
for $a_i\in \Z$.
Multiplying two of these, using the relations above, we get
\begin{align*}
    (a_1 + a_2g + a_3\epsilon_1 + a_4\epsilon_2 + a_5\epsp )&(b_1 + b_2g + b_3\epsilon_1 + b_4\epsilon_2 + b_5\epsp ) \\
    &= a_1b_1 + (a_1b_2 + a_2b_1 + 2a_2b_2)g \\
    &\qquad + (a_1b_3 + a_3b_1 + 2a_3b_3)\epsilon_1 \\
    &\qquad + (a_1b_4 + a_4b_1 + 2a_4b_4)\epsilon_2 \\
    &\qquad + (a_1b_5 + 2a_3b_4 + 2a_3b_5 + 2a_4b_3 + 2a_4b_5 \\
    &\qquad\qquad+ a_5b_1 + 2a_5b_3 + 2a_5b_4 + 2a_5b_5)\epsp 
\end{align*}
Setting this equal to 1, we see that we must have $a_1b_1 = 1$, hence $a_1 = b_1 = \pm 1$.
Consider the case $a_1 = b_1 = 1$. Then we have
\[
    b_2 + a_2 + 2a_2b_2 = 0.
\]
As in the proof of \cite[Proposition~8.3]{CH:bu2}, the only integer solutions are $a_2 = b_2 = 0$ or $-1$.
Similarly, $a_3 = b_3 = 0$ or $-1$ and $a_4 = b_4 = 0$ or $-1$.

To determine $a_5$ and $b_5$ we look at various cases. We continue to assume that $a_1 = b_1 = 1$,
and note that $a_2$ and $b_2$ do not come into the final coefficient of the product above.
\begin{itemize}
    \item If $a_3 = b_3 = 0$ and $a_4 = b_4 = 0$: Then $b_5 + a_5 + 2a_5b_5 = 0$, from which
    we conclude that $a_5 = b_5 = 0$ or $-1$. The nontrivial cases give us the units
    \begin{align*}
        1-g &= -(1-\kappa) \\
        1-\epsp  \\
        \intertext{and}
        1-g-\epsp  &= (1-g)(1-\epsp ) = -(1-\kappa)(1-\epsp ).
    \end{align*}

    \item If $a_3 = b_3 = -1$ and $a_4 = b_4 = 0$: Then we must have
    \begin{align*}
        0 &= b_5 - 2b_5 + a_5 - 2a_5 + 2a_5b_5 \\
        &= -a_5 - b_5 + 2a_5b_5 \\
    \intertext{so}
        1 &= (1-2a_5)(1-2b_5).
    \end{align*}
    Since $a_5$ and $b_5$ are integers, this tells us that $1-2a_5 = 1-2b_5 = \pm 1$, hence
    $a_5 = b_5 = 0$ or $1$. This gives us the units
    \begin{align*}
        1 - \epsilon_1 \\
        1 - \epsilon_1 + \epsp  &= (1-\epsilon_1)(1-\epsp )
    \end{align*}
    and their products with $1-g$.

    \item If $a_3 = b_3 = 0$ and $a_4 = b_4 = -1$: This is similar to the preceding case
    and gives us the units $1-\epsilon_2$, $(1-\epsilon_2)(1-\epsp )$,
    and their products with $1-g$.

    \item If $a_3 = b_3 = -1$ and $a_4 = b_4 = -1$: Now we must have
    \begin{align*}
        0 &= b_5 + 2 - 2b_5 + 2 - 2b_5 - a_5 - 2a_5 - 2a_5 + 2a_5b_5 \\
        &= 4 - 3a_5 - 3b_5 + 2a_5b_5 \\
    \intertext{so}
        1 &= (3-2a_5)(3-2b_5).
    \end{align*}
    This gives us $3 - 2a_5 = 3-2b_5 = \pm 1$, hence $a_5 = b_5 = 1$ or $2$. This gives us the units
    \begin{align*}
        1 - \epsilon_1 - \epsilon_2 + \epsp  &= (1-\epsilon_1)(1-\epsilon_2)(1-\epsp ) \\
        1 - \epsilon_1 - \epsilon_2 + 2\epsp  &= (1-\epsilon_1)(1-\epsilon_2)
    \end{align*}
    and their products with $1-g$.
\end{itemize}
The cases in which $a_1 = b_1 = -1$ give us the negatives of the units found above.
Moreover, all of these cases are distinct because $\{1,g,\epsilon_1,\epsilon_2,\epsp \}$ is a basis.
Hence, the units are exactly the 32 listed in the statement of the proposition.
\end{proof}

\begin{remark}
As mentioned in \cite{CH:bu2}, we prefer to use $1-\kappa$ rather than $1-g$, so that each
of the units $1-\kappa$, $1-\epsilon_1$, $1-\epsilon_2$, and $1-\epsp $ restricts to 1 nonequivariantly.
\end{remark}

For a number of purposes, including studying pushforward maps as in \S\ref{sec:pushforward} below,
it is more natural to use generators based on the dual bundles $\omega_1\dual$ and $\omega_2\dual$.
There is a $\GG$-involution (up to homotopy)
\[
    \delta\colon BT^2\to BT^2
\]
classifying the pair of line bundles $(\omega_1\dual,\omega_2\dual)$, and we set
\begin{align*}
    \clod &= \delta^*\clo = e(\omega_1\dual) \\
    \cxlod &= \delta^*\cxlo = e(\chi\omega_1\dual) \\
    \cltd &= \delta^*\clt = e(\omega_2\dual) \\
    \cxltd &= \delta^*\cxlt = e(\chi\omega_2\dual) \\
    \cltensd &= \delta^*\cltens = e((\omega_1\tensor\omega_2)\dual) \\
    \cxltensd &= \delta^*\cxltens = e(\chi(\omega_1\tensor\omega_2)\dual).
\end{align*}
Since $\delta^*$ is an algebra isomorphism, these elements can be used as generators,
satisfying the relations listed in Theorem~\ref{thm:main}, \textit{mutatis mutandis.}
But they must also be able to be written in terms of the original generators, and we now work out those formulas.

\begin{proposition}\label{prop:duals}
The dual classes can be written in terms of the usual generators as
\begin{align*}
    \clod &= -(1-\epsilon_1)\clo \\
    \cxlod &= -(1-\kappa)(1-\epsilon_1)\cxlo \\
    \cltd &= -(1-\epsilon_2)\clt \\
    \cxltd &= -(1-\kappa)(1-\epsilon_2)\cxlt \\
    \cltensd &= -(1-\epsilon_1)(1-\epsilon_2)\cltens \\
    \cxltensd &= -(1-\kappa)(1-\epsilon_1)(1-\epsilon_2)\cxltens.
\end{align*}
\end{proposition}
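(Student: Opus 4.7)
The plan is to reduce each of the six identities to a verification on the fixed-set restriction. The proposed correction factors $-(1-\epsilon_i)$, $-(1-\kappa)(1-\epsilon_i)$, $-(1-\epsilon_1)(1-\epsilon_2)$, and $-(1-\kappa)(1-\epsilon_1)(1-\epsilon_2)$ each live in grading $0$, so each identity sits entirely in an even grading. By \cite[Corollary~3.5]{CH:bu2}, which is already invoked throughout \S\ref{sec:fixed sets}, the restriction-to-fixed-points map
\[
 \eta \colon H_\GG^\gr(BT^2_+) \to H_\GG^\gr((BT^2)^\GG_+)
\]
is a monomorphism in even gradings, so it suffices to verify each identity after applying $\eta$.

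The key geometric input is that $\delta$ preserves each component $B^{ij}$ of $(BT^2)^\GG = \Disjunion_{i,j} B^{ij}$ and acts on each as the nonequivariant dual involution. This is because the two irreducible complex characters of $\GG$, the trivial and sign representations, are each self-dual, so $(\omega_1\dual,\omega_2\dual)$ has exactly the same equivariant fiber type over every fixed point as $(\omega_1,\omega_2)$; hence $\delta$ cannot permute the components. The nonequivariant dual involution sends each first Chern class $x_k$ to $-x_k$ while fixing the component-indicator classes $\zeta_{k\ell}$ (and $e$). Combined with Proposition~\ref{prop:restrictions}, this immediately determines $\eta$ of each dual generator: for example,
\[
 \eta(\clod) = (-x_1\zeta^{\omega_1-2},\ -x_1\zeta^{\omega_1-2},\ (e^2-\xi x_1)\zeta^{\omega_1-2\sigma},\ (e^2-\xi x_1)\zeta^{\omega_1-2\sigma}),
\]
and the remaining five values are obtained from the table in Proposition~\ref{prop:restrictions} by the substitutions $x_k \mapsto -x_k$ and $x_1+x_2 \mapsto -(x_1+x_2)$.

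What is left is to expand $\eta$ of each proposed right-hand side componentwise and match it against the formula above. This uses the definitions of $\epsilon_1$ and $\epsilon_2$, the tabulated values of $\eta(\clo)$, $\eta(\clt)$, and $\eta(\zeta_{ij})$, together with the identities $\kappa^2 = 2\kappa$ and $\kappa\xi = 0$ that simplify the $\kappa$-heavy cross-terms produced by $(1-\kappa)(1-\epsilon_i)$. The principal obstacle will be the tensor-product duals $\cltensd$ and $\cxltensd$: the $\eta$-value of $\cltens$ on $B^{00}$ and $B^{11}$ (where $\omega_1\tensor\omega_2$ has trivial fiber) involves the nonequivariant factor $(x_1+x_2)$, whereas on $B^{01}$ and $B^{10}$ (where $\omega_1\tensor\omega_2$ has sign fiber) it involves $(e^2+\xi(x_1+x_2))$, and both regimes must simultaneously yield the single unit multiplier $-(1-\epsilon_1)(1-\epsilon_2)$. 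Once the behavior of $\delta$ on the fixed set is established, however, each of these matchings reduces to a direct algebraic check in $\HS[x_1,x_2,\zeta_{ij}^{\pm 1}]$ on each of the four summands.
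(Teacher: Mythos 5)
Your proposal is correct in outline, and I verified that the componentwise matching you defer does go through; but it takes a genuinely different route from the paper. The paper's proof is almost entirely algebraic: it quotes the $BT^1$ dual formula from \cite[Proposition~11.6]{Co:InfinitePublished} to get the first four identities by pullback along the two projections, and to get the general shape $\cltensd = -(1-e^{-2}\kappa\zeta_{00}\zeta_{11}\cltens)\cltens$ from the classifying map of $\omega_1\tensor\omega_2$; it then substitutes the ring relation $\zeta_{11}\cltens = \zeta_{01}\clo+\zeta_{10}\clt - e^{-2}\kappa\zeta_{00}\zeta_{01}\zeta_{10}\clo\clt$ to rewrite the unit as $-(1-\epsilon_1-\epsilon_2+2\epsp) = -(1-\epsilon_1)(1-\epsilon_2)$. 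You instead verify everything by restriction to fixed points, using the even-grading injectivity of $\eta$ and the geometric observation that $\delta$ preserves each $B^{ij}$ (since $\C$ and $\Cq{}$ are self-dual) and acts there as the nonequivariant dualization $x_k\mapsto -x_k$. This is exactly the method the paper uses to establish the defining relations in Corollary~\ref{cor:relations}, so it is self-contained given Proposition~\ref{prop:restrictions} and does not require the external $BT^1$ dual formula; the price is the componentwise bookkeeping you leave to the reader. Two small points to make explicit if you write it out: (i) the sign flip on the components where the fiber is $\Cq{}$ comes from $(1-\kappa)e^2 = -e^2$ (equivalently $\kappa e^2 = 2e^2$, from $ge=0$), not from $\kappa^2=2\kappa$ or $\kappa\xi=0$ alone --- this is what turns $-(1-\kappa)(e^2+\xi x)$ into $e^2-\xi x$ as required; and (ii) one should note that $(\delta^\GG)^*$ fixes the regrading units $\zeta_{k\ell}$ (and $e$), which follows because they live in the degree-zero part of each component and $\delta$ preserves the component decomposition.
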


\begin{proof}
The first four formulas follow from the similar formulas in the cohomology of $BT^1$,
shown in \cite[Proposition~11.6]{Co:InfinitePublished}.
That result also implies the first equality in the following.
\begin{align*}
    \cltensd &= -(1 - e^{-2}\kappa\zeta_{00}\zeta_{11}\cltens)\cltens \\
    &= -[1 - e^{-2}\kappa\zeta_{00}(\zeta_{01}\clo + \zeta_{10}\clt - e^{-2}\kappa\zeta_{00}\zeta_{01}\zeta_{10}\clo\clt)]\cltens \\
    &= -(1 - \epsilon_1 - \epsilon_2 + 2\epsp )\cltens \\
    &= -(1-\epsilon_1)(1-\epsilon_2)\cltens.
\end{align*}
The formula for $\cxltensd$ follows similarly.
\end{proof}

\section{Notes on the relations}\label{sec:notes}

If we take into account several self-maps of $BT^2$, there is a sense in which the only three
relations we need from Theorem~\ref{thm:main} are
\begin{align}
    \prod\nolimits_{i,j} \zeta_{ij} &= \xi \\
    \zeta_{11} \cltens &= \zeta_{01}\clo + \zeta_{10}\clt
		- e^{-2}\kappa \zeta_{00}\zeta_{01}\zeta_{10}\clo\clt \label{eqn:main rel} \\
    \cltens\cxltens &=
		\clo\cxlo + \clt\cxlt
		+ \tau(\iota^{-2})\zeta_{00}^2\zeta_{01}\zeta_{10}\clo\clt. \label{eqn:product rel}
\end{align}
For example, there is the self map $\chi_1\colon BT^2\to BT^2$ that applies $\chi$ to the first
factor of $BT^1$. When we apply $\chi_1^*$ to (\ref{eqn:main rel}), we get the relation
\[
    \zeta_{01}\cxltens = \zeta_{11}\cxlo + \zeta_{00}\clt - e^{-2}\kappa \zeta_{00}\zeta_{10}\zeta_{11}\cxlo\clt.
\]
The relations involving $\zeta_{10}\cxltens$ and $\zeta_{00}\cltens$ can be gotten similarly.

The relation for $\zeta_{10}\zeta_{11}\cxlo$ really comes from the similar relation
in the cohomology of $BT^1$ shown in \cite{Co:InfinitePublished}, that
\begin{equation}\label{eqn:bt1 fundamental}
    \zeta_1\cxw = (1-\kappa)\zeta_0\cw + e^2,
\end{equation}
but that relation can be derived from (\ref{eqn:main rel}) as well. There are several ways to do this,
perhaps the simplest being the following. Consider the map
$t\colon BT^1\to BT^2$ classifying the pair of line bundles $(\omega,\Cq{})$,
so that $t^*(\omega_1\tensor\omega_2) = \chi\omega$.
In cohomology, we have
\begin{align*}
    t^*(\zeta_{00}) &= 1  & t^*(\zeta_{01}) &= \zeta_0 \\
    t^*(\zeta_{10}) &= 1 & t^*(\zeta_{11}) &= \zeta_1 \\
    t^*(\clo) &= \cw & t^*(\cxlo) &= \cxw \\
    t^*(\clt) &= e^2 & t^*(\cxlt) &= 0 \\
    t^*(\cltens) &= \cxw & t^*(\cxltens) &= \cw
\end{align*}
If we apply $t^*$ to (\ref{eqn:main rel}), we get
\begin{align*}
    \zeta_1\cxw &= t^*(\zeta_{11}\cltens) \\
    &= t^*(\zeta_{01}\clo + \zeta_{10}\clt
		- e^{-2}\kappa \zeta_{00}\zeta_{01}\zeta_{10}\clo\clt) \\
    &= \zeta_0\cw + e^2 - e^{-2}\kappa \zeta_0\cw\cdot e^2 \\
    &= (1-\kappa)\zeta_0\cw + e^2,
\end{align*}
which is (\ref{eqn:bt1 fundamental}).
We can then get the relation for $\zeta_{10}\zeta_{11}\cxlo$ given in Theorem~\ref{thm:main}
by pulling back (\ref{eqn:bt1 fundamental}) along the first projection $BT^2\to BT^1$,
and the relation for $\zeta_{01}\zeta_{11}\cxlt$ by pulling back along the second projection.

\begin{remark}\label{rem:global approach}
There is something deeper going on here that may turn out to be useful in describing the cohomologies of the $BT^n$
in general. We sketch the idea here, as describing it in detail would take us too far afield,
and not all of the details have been worked out, yet.
We could consider a category with objects $* = BT^0$, $BT^1$, and $BT^2$, and certain maps
between them including those used above, classifying tuples of tensor products of
tensor powers of tautological bundles (including $\C$ and $\Cq{}$).
We can then consider the cohomologies of $*$, $BT^1$, and $BT^2$ as giving a contravariant
functor on this category, into a category of graded rings with varying grading groups.
This functor can then be described as generated by
the elements $\zeta_i$ in the cohomology of $BT^1$, $\zeta_{ij}$ in the cohomology of $BT^2$,
and a single element $\cw$ in the cohomology of $BT^1$,
as we can obtain $\cxw$, $\clo$, $\cxlo$, and so on as the images of $\cw$ under 
the classifying maps for $\chi\omega$ and so on, which are maps in this category.
The relations required can then be reduced to ones describing the relations of the $\zeta$s
and the relations (\ref{eqn:main rel}) and (\ref{eqn:product rel}) 
(and one more, describing $e(\omega^{\tensor 2})$; see Remark~\ref{rem:missing relation}).

To extend this to a functor encompassing the cohomologies of all of the $BT^n$ appears to be not
as straightforward as we would hope. Preliminary calculations suggest that a new generator in the cohomology
of $BT^4$ is required, and likely there are further such generators required for larger $n$.
\end{remark}

One more point to make about the relations. (\ref{eqn:main rel}) is as close as we come to being
able to write $\cltens = e(\omega_1\tensor\omega_2)$ in terms of $\clo = e(\omega_1)$ and $\clt = e(\omega_2)$;
because $\zeta_{11}$ is not invertible, we cannot solve for $\cltens$.
The upshot of this is that we do not have a formal group law, unlike nonequivariant ordinary cohomology,
which supports the additive formal group law.

There is active research on equivariant formal group laws, introduced in \cite{CGK:formalGroupLaws},
that are present in \emph{complex oriented} theories,
a subclass of the class of \emph{complex stable} theories.
The latter are required to have suspension isomorphisms
of the form $E_G^\alpha(X) \iso E_G^{\alpha+|V|}(\Susp^V X)$ for all complex representations $V$ of $G$,
where $|V|$ is the integer real dimension of $V$,
so in a sense they do not distinguish between different complex representations of the group.
(Likewise, complex oriented theories in this sense have Thom classes for complex vector bundles
that live in integer grading, whereas ordinary cohomology has Thom classes that live in gradings
in the representation ring of the fundamental groupoid of the basespace, generally not in integer gradings.)

We could turn ordinary cohomology $H_\GG^{RO(\GG)}$ into a complex stable theory by inverting the element $\xi$,
but this has exactly the effect of turning Bredon cohomology into Borel cohomology, which is indeed
complex oriented and does carry an additive formal group law.
We can see that explicitly, because inverting $\xi$ inverts all the $\zeta_{ij}$ as well,
allowing us to express $\cltens$ in terms of $\clo$ and $\clt$.
In addition, when we invert $\xi$, $e^{-2}\kappa$ becomes 0,
so the third term in (\ref{eqn:main rel}) goes away, leaving a simple additive relation.
On the other hand, as discussed in \cite{CHTAlgebraic}, there are distinct advantages
to Bredon cohomology over Borel cohomology,
so we can live without a formal group law.

\section{Euler classes of tensor products}\label{sec:euler}

As mentioned at the end of the preceding section, we do not have a formal group law
and cannot, in general, express the Euler class of a tensor product in terms
of the Euler classes of the factors.
However, there are some interesting special cases that we can calculate.

Adapting the notation from algebraic geometry, for $n\in \Z$ we write
\begin{align*}
    O(n) &= (\omega(1)\dual)^{\tensor n} \\
    \mathllap{\text{and }} \chi O(n) &= O(n)\tensor\Cq{}
\end{align*}
for the indicated line bundles over $BT^1$. 
In \cite[Proposition~6.5]{CHTFiniteProjSpace} we calculated the Euler classes of these line bundles,
one in particular being
\begin{equation}\label{eqn:Q}
    Q = e(O(2)) = \cwd(\tau(\iota^{-2})\zeta_0 + e^{-2}\kappa\cxwd).
\end{equation}
We will use this calculation to derive a number of other Euler classes.

For $m,n\in \Z$, write
\begin{align*}
    O(m,n) &= (\omega_1\dual)^{\tensor m}\tensor(\omega_2\dual)^{\tensor n} \\
    \mathllap{\text{and }} \chi O(m,n) &= O(m,n)\tensor\Cq{}
\end{align*}
for the indicated line bundles over $BT^2$.
We will compute the Euler classes of these bundles using the following general result
(which we will use again in \S\ref{sec:pushforward}).

\begin{proposition}\label{prop:tensor w fixed}
Let $X$ be a $\GG$-space with complex line bundles $\omega$ and $\mu$, where
the fibers of $\mu$ all have trivial $\GG$-action. 
Write
\[
    X^\GG = X^0 \disjunion X^1
\]
where the fibers of $\omega$ over $X^0$ have trivial $\GG$-action while the
fibers over $X^1$ are copies of $\Cq{}$. 
(So $X^0$ and $X^1$ are, in general, unions of components of $X^\GG$.)
Let $\zeta_0$ and $\zeta_1$ be the
pullbacks of $\zeta_0$ and $\zeta_1$ along the map $X\to BU(1)$ classifying $\omega$.
Then
\[
    e(\omega\tensor\mu) = e(\omega) + \zeta_1 e(\mu).
\]
\end{proposition}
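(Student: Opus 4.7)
The plan is to classify the pair $(\omega,\mu)$ by a $\GG$-map $f = (f_\omega, f_\mu)\colon X \to BT^1 \times BT^1 = BT^2$ and pull back the relations of Theorem~\ref{thm:main} to $X$, exploiting the fact that $\mu$ has trivial $\GG$-fibers. Under $f$ we have $f^*(\clo) = e(\omega)$, $f^*(\clt) = e(\mu)$, and $f^*(\cltens) = e(\omega\tensor\mu)$, while the $\zeta_0, \zeta_1$ of the statement are $f_\omega^*(\zeta_0)$ and $f_\omega^*(\zeta_1)$. The hypothesis on $\mu$ means that $f_\mu$ factors through the trivial-type fixed-set component $\PP(\C^\infty) \hookrightarrow BT^1$, so the pullbacks of the second-factor $\zeta_0, \zeta_1$ are computable from the $BT^1$ analogue of Proposition~\ref{prop:restrictions} given in \cite{Co:InfinitePublished}.

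My preferred route is to verify the identity by restriction to $X^\GG$. Both sides of $e(\omega\tensor\mu) = e(\omega) + \zeta_1 e(\mu)$ lie in an even grading (the Thom-class grading of $\omega\tensor\mu$), so by the analogue of \cite[Corollary~3.5]{CH:bu2} the fixed-point restriction map is injective on this grading, and it suffices to check the identity separately on $X^0$ and $X^1$. On $X^0$, all of $\omega$, $\mu$, and $\omega\tensor\mu$ are of trivial fiber type, and the restrictions of the various Euler classes (computed via the $BT^1$ analogue of Proposition~\ref{prop:restrictions}) reduce the identity to the nonequivariant additivity $e(\omega\tensor\mu) = x_\omega + x_\mu$, using that $\zeta_1$ takes its trivial-type value. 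On $X^1$, $\omega$ and $\omega\tensor\mu$ are of sign fiber type while $\mu$ remains trivial type, so the Euler classes of $\omega$ and $\omega\tensor\mu$ pick up additive corrections of the form $e^2 + \xi x$ from the sign-type restriction formula, and the identity should follow once these combine with the sign-type value of $\zeta_1$ and the nonequivariant $e(\omega\tensor\mu) = x_\omega + x_\mu$.

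\emph{Main obstacle.} The main work lies on the sign-type component $X^1$: here the Euler classes of $\omega$ and $\omega\tensor\mu$ both contain the correction $e^2$, while $e(\mu)$ does not, so the identity requires a nontrivial cancellation mediated by the factor of $\zeta_1$. Carrying this through cleanly amounts to the same algebraic manipulation that already appears in deriving the fundamental $BT^1$ relation (\ref{eqn:bt1 fundamental}) from (\ref{eqn:main rel}), so the result should reduce, on each fixed component, to a consequence of formulas already in hand.
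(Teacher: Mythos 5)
Your opening paragraph is exactly the right (and the paper's) strategy: classify the pair $(\omega,\mu)$ by a map $p\colon X\to BT^2$ and pull back a relation already established universally. But you then abandon it for a ``preferred route'' that has a genuine gap: you invoke ``the analogue of \cite[Corollary~3.5]{CH:bu2}'' to claim that restriction to $X^\GG$ is injective in the relevant grading. No such analogue exists for an arbitrary $\GG$-space $X$. That injectivity statement is a special feature of spaces like $BU(2)$ and $BT^2$, whose cohomology is free over $\HS$ with basis detected by the fixed-point and nonequivariant restrictions; the proposition here is stated for an arbitrary $X$ carrying two line bundles, and for general $X$ the map $H_\GG^\gr(X_+)\to H_\GG^\gr(X^\GG_+)$ can have a large kernel in any grading (indeed $X^\GG$ may be empty). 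Checking the identity separately on $X^0$ and $X^1$ therefore only verifies the image of the two sides under the fixed-point restriction, which proves nothing about the classes themselves.

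The repair is simply to carry out your first paragraph, which is what the paper does. Since the fibers of $\mu$ over $X^\GG$ are trivial, $X^0$ lands in $T^{00}$ and $X^1$ in $T^{10}$, so $p^*(\zeta_{01})=p^*(\zeta_{11})=1$, while $p^*(\zeta_{00})=\zeta_0$, $p^*(\zeta_{10})=\zeta_1$, $p^*(\clo)=e(\omega)$, $p^*(\clt)=e(\mu)$, and $p^*(\cltens)=e(\omega\tensor\mu)$. Pulling back the relation
\[
    \zeta_{11}\cltens = \zeta_{01}\clo + \zeta_{10}\clt - e^{-2}\kappa\,\zeta_{00}\zeta_{01}\zeta_{10}\clo\clt
\]
gives $e(\omega\tensor\mu)=e(\omega)+\zeta_1 e(\mu)$ directly, the cross term vanishing because $p^*(\zeta_{00}\zeta_{01}\zeta_{10})=\zeta_0\zeta_1=\xi$ and $e^{-2}\kappa\cdot\xi=0$. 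No fixed-point cancellation of the sort you describe as the ``main obstacle'' is needed. (A minor further slip: $f_\mu$ does not factor through $\Xp\infty\includesin BT^1$ as a map of $\GG$-spaces; only its restriction to $X^\GG$ lands in that component, which is all the argument requires.)
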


\begin{proof}
Let $p\colon X\to B T^2$ classify the pair $(\omega,\mu)$. Then $X^0$ maps to $T^{00}$
and $X^1$ maps to $T^{10}$, so that
\begin{align*}
    p^*(\zeta_{00}) &= \zeta_0  &  p^*(\zeta_{01}) &= 1 \\
    p^*(\zeta_{10}) &= \zeta_1  &  p^*(\zeta_{11}) &= 1 \\
    p^*(\clo) &= e(\omega)  &  p^*(\clt) &= e(\mu) \\
    p^*(\cltens) &= e(\omega\tensor\mu)  &  p^*(\cxltens) &= e(\chi\omega\tensor\mu).
\end{align*}
We then have
\begin{align*}
    e(\omega\tensor\mu) &= p^*(\zeta_{11}\cltens) \\
        &= p^*(\zeta_{01}\clo + \zeta_{10}\clt - e^{-2}\kappa \zeta_{00}\zeta_{01}\zeta_{10}\clo\clt) \\
        &= e(\omega) + \zeta_1 e(\mu),
\end{align*}
with no third term because $e^{-2}\kappa\cdot\xi = 0$.
\end{proof}

We now compute the Euler classes of the line bundles $O(m,n)$ and $\chi O(m,n)$.
With the algebro-geometric convention, it's most natural to do this in terms of the dual generators.

\begin{proposition}
Let
\begin{align*}
    Q_1 &= \clod(\tau(\iota^{-2})\zeta_{00}\zeta_{01} + e^{-2}\kappa\cxlod) \\
    \mathllap{\text{and }} 
    Q_2 &= \cltd(\tau(\iota^{-2})\zeta_{00}\zeta_{10} + e^{-2}\kappa\cxltd).
\end{align*}
Then, for $k,\ell\in \Z$, we have
\begin{align*}
    e(O(2k,2\ell)) &= kQ_1 + \ell Q_2 \\
    e(O(2k+1,2\ell)) &= \clod + \zeta_{10}\zeta_{11}(kQ_1 + \ell Q_2) \\
    e(O(2k,2\ell+1)) &= \cltd + \zeta_{01}\zeta_{11}(kQ_1 + \ell Q_2) \\
    e(O(2k+1,2\ell+1)) &= \cltensd + \zeta_{01}\zeta_{10}(kQ_1 + \ell Q_2) \\
    e(\chi O(2k,2\ell)) &= e^2 + \xi(kQ_1 + \ell Q_2) \\
    e(\chi O(2k+1,2\ell)) &= \cxlod + \zeta_{00}\zeta_{01}(kQ_1 + \ell Q_2) \\
    e(\chi O(2k,2\ell+1)) &= \cxltd + \zeta_{00}\zeta_{10}(kQ_1 + \ell Q_2) \\
    e(\chi O(2k+1,2\ell+1)) &= \cxltensd + \zeta_{00}\zeta_{11}(kQ_1 + \ell Q_2).
\end{align*}
\end{proposition}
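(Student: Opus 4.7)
My strategy is to reduce all eight formulas to two inputs: the base-case computation (\ref{eqn:Q}) from \cite{CHTFiniteProjSpace} giving $Q = e(O(2))$ over $BT^1$, and Proposition \ref{prop:tensor w fixed}, which computes $e(\omega \tensor \mu)$ when $\mu$ has trivial fiber action. Since every $O(m,n)$ can be written as $\omega \tensor \mu$ with $\mu$ of the form $O(2k,2\ell)$ (hence trivial-fiber-action), a single application of that proposition will yield each formula once the inputs are identified.

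I would first identify $Q_1 = e(O(2,0))$ and $Q_2 = e(O(0,2))$ by pulling back (\ref{eqn:Q}) along the two projections $\pi_i \colon BT^2 \to BT^1$. Under $\pi_1$, the tautological bundle $\omega$ pulls back to $\omega_1$, so $O(2) = (\omega\dual)^{\tensor 2}$ pulls back to $O(2,0)$; in cohomology, $\pi_1^*(\zeta_0) = \zeta_{00}\zeta_{01}$ (using $\pi_1^*\Omega_0 = \Omega_{00}+\Omega_{01}$), $\pi_1^*(\widehat{c}_\omega) = \clod$, and $\pi_1^*(\widehat{c}_{\chi\omega}) = \cxlod$. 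The formula for $Q_1$ follows, and $Q_2$ is symmetric.

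Next I would establish the sub-lemma $e(O(2k,2\ell)) = kQ_1 + \ell Q_2$ by induction on $k+\ell$. This reduces to the additivity statement: for line bundles $\mu_1, \mu_2$ both with trivial fiber action, $e(\mu_1\tensor\mu_2) = e(\mu_1) + e(\mu_2)$. This follows from Proposition \ref{prop:tensor w fixed} applied with $\omega = \mu_1$; when $\mu_1$ also has trivial fiber action, its classifying map factors through the component $B^0 \subset (BT^1)^\GG$, and the cross term $\zeta_1 \cdot e(\mu_2)$ collapses to $e(\mu_2)$. Since both sides lie in even grading, one verifies this collapse by restriction to fixed sets using \cite[Corollary~3.5]{CH:bu2}.

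The remaining seven formulas are obtained by single applications of Proposition \ref{prop:tensor w fixed} with $\mu = O(2k,2\ell)$ and parity-correcting $\omega$: $\omega_1\dual$, $\omega_2\dual$, or $\omega_1\dual\tensor\omega_2\dual$ for the three mixed-parity cases, and $\Cq{}$ tensored with the same choices for the $\chi$-variants. In each case the leading term is $e(\omega)$ (one of $\clod, \cltd, \cltensd, e^2, \cxlod, \cxltd, \cxltensd$), and the $\zeta_1$ pullback is the monomial in the $\zeta_{ij}$'s picked out by the classifying map (e.g., $\zeta_{10}\zeta_{11}$ for $\omega_1\dual$, $\zeta_{01}\zeta_{10}$ for $\omega_1\dual\tensor\omega_2\dual$, and $\xi$ for the constant classifying map of $\Cq{}$). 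The main obstacle is the additivity sub-lemma itself, which does not follow algebraically from the relations in $P^\gr$ alone and instead requires the fixed-point analysis above together with careful identification of the $\zeta$-pullbacks; once it is in hand, all eight formulas drop out uniformly.
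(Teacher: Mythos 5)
Your proposal is correct and follows essentially the same route as the paper: identify $Q_1$ and $Q_2$ as the pullbacks of $Q = e(O(2))$ along the two projections, establish $e(O(2k,2\ell)) = kQ_1 + \ell Q_2$ inductively, and then obtain each of the remaining formulas from Proposition~\ref{prop:tensor w fixed} applied to a parity-correcting line bundle tensored with $O(2k,2\ell)$, with the coefficient $\zeta_1$ pulled back along the relevant classifying map exactly as in the paper's case table. The only difference is organizational---the paper runs one application of Proposition~\ref{prop:tensor w fixed} for general $(m,n)$ and tabulates the eight values of $\zeta_1$ at once, while you separate the even-even sub-lemma from seven individual applications---but the decomposition, the key lemma, and the fixed-component bookkeeping are the same.
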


\begin{proof}
Let $\pi_1,\pi_2\colon BT^2\to BT^1$ be the two projections, so $\omega_1 = \pi_1^*\omega$ and
$\omega_2 = \pi_2^*\omega$. Then
\begin{align*}
    Q_1 &= \pi_1^* Q = \pi_1^* e(O(2)) = e(O(2,0)) \\
\intertext{and}
    Q_2 &= \pi_2^* Q = \pi_2^* e(O(2)) = e(O(0,2)).
\end{align*}

We now apply Proposition~\ref{prop:tensor w fixed} to the bundles $O(m,n)$ and $O(2k,2\ell)$,
to get
\begin{equation}\label{eqn:Euler induction}
    e(O(m+2k,n+2\ell)) = e(O(m,n)) + \zeta_1 e(O(2k,2\ell))
\end{equation}
where $\zeta_1$ is the pullback of the element of the same name in the cohomology of $BU(1)$ along the classifying
map for $O(m,n)$, hence
\[
    \zeta_1
        = \begin{cases}
            1 & \text{if $m$ and $n$ are even} \\
            \zeta_{10}\zeta_{11} & \text{if $m$ is odd and $n$ is even} \\
            \zeta_{01}\zeta_{11} & \text{if $m$ is even and $n$ is odd} \\
            \zeta_{01}\zeta_{10} & \text{if $m$ and $n$ are odd.}
        \end{cases}
\]
Considering the case when $m$ and $n$ are even and using induction on $k$ and $\ell$, (\ref{eqn:Euler induction}) shows that
\[
    e(O(2k,2\ell)) = ke(O(2,0)) + \ell e(O(0,2)) = kQ_1 + \ell Q_2.
\]
We now specialize (\ref{eqn:Euler induction}) to the case where each of $m$ and $n$ is either 0 or 1, where we note that
\begin{align*}
    e(O(0,0)) &= 0 \\
    e(O(1,0)) &= \clod \\
    e(O(0,1)) &= \cltd \\
    e(O(1,1)) &= \cltensd
\end{align*}
The first four calculations of the proposition now follow.

For the remaining four, we apply Proposition~\ref{prop:tensor w fixed} to the pair
$\chi O(m,n)$ and $O(2k,2\ell)$ to get
\[
    e(\chi O(m+2k,n+2\ell)) = e(\chi O(m,n)) + \zeta_1 e(O(2k,2\ell))
\]
where now
\[
    \zeta_1
        = \begin{cases}
            \xi & \text{if $m$ and $n$ are even} \\
            \zeta_{00}\zeta_{01} & \text{if $m$ is odd and $n$ is even} \\
            \zeta_{00}\zeta_{10} & \text{if $m$ is even and $n$ is odd} \\
            \zeta_{00}\zeta_{11} & \text{if $m$ and $n$ are odd.}
        \end{cases}
\]
The rest of the argument is similar to the case above.
\end{proof}

\begin{remarks}\label{rem:missing relation}\ 
\begin{enumerate}
\item A variation of this proof could have been used to derive all of the Euler classes
calculated in \cite[Proposition~6.5]{CHTFiniteProjSpace} from the calculation of $e(O(2))$.

\item Equation (\ref{eqn:Q}) needs to be included among the relations used in the approach
discussed in Remark~\ref{rem:global approach}, as the maps used there are classifying maps
similar to those used in the proof above,
and (\ref{eqn:Q}) is needed to determine all of their images.
\end{enumerate}
\end{remarks}

\section{$B T^2$ as a projective bundle over $B U(2)$}\label{sec:projective}

We have the map
\[
    s\colon B T^2 \to B U(2)
\]
classifying the sum $\omega_1\dirsum\omega_2$.
We studied the cohomology of $B U(2)$ in \cite{CH:bu2}; as there, we will write
$\omega = \omega(2)$ for the tautological two-plane bundle over $B U(2)$.
As is familiar nonequivariantly, $s$ is equivalent to the projective bundle $\PP(\omega)\to BU(2)$
with $\omega_1$, say, being the tautological line bundle over $\PP(\omega)$ and $\omega_2$ its quotient or complementary bundle
$s^*(\omega)/\omega_1$.

Nonequivariantly, we have the projective bundle formula that tells us that the cohomology
of $BT^2$ is free as a module over the cohomology of $BU(2)$.
Moreover, we know that the cohomology of $BU(2)$ is exactly the symmetric part of the cohomology
of $BT^2 = BT^1\times BT^1$ under the action that exchanges the two factors.
As we shall see, neither of these facts carries over completely to the equivariant context.

Note, however, that we do have that
\[
    RO(\Pi BU(2)) \iso RO(\Pi BT^2)^{\Sigma_2}
\]
via $s^*$, as $s^*$ is injective and the image of $s^*$ and the symmetric part
of $RO(\Pi BT^2)$ both consist of the gradings
$a+b\sigma + \sum_{ij} m_{ij}\Omega_{ij}$ with $m_{01} = m_{10}$.

To specify the map 
\[
 s^*\colon  H_\GG^{RO(\Pi BU(2))}(B U(2)_+) \to  H_\GG^{RO(\Pi BU(2))}(B T^2_+),
\]
it suffices to compute the image of each generator.

\begin{proposition}
We have
\begin{align*}
 s^*\zeta_0 &= \zeta_{00} \\
 s^*\zeta_1 &= \zeta_{01}\zeta_{10} \\
 s^*\zeta_2 &= \zeta_{11} \\
 s^*\cl &= \cltens \\
 s^*\cxl &= \cxltens \\
 s^*\cw &= \clo\clt \\
 s^*\cxw &= \cxlo\cxlt.
\end{align*}
\end{proposition}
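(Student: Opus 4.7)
The plan is to dispatch the formulas in two batches: the Chern-like classes follow from naturality of Euler classes applied to the tautological identification $s^*\omega = \omega_1\dirsum\omega_2$, while the $\zeta$-classes are pinned down by restriction to fixed sets.

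For the Euler classes, by construction $s^*\omega = \omega_1\dirsum\omega_2$, so taking the top exterior power gives $s^*\lambda = \omega_1\tensor\omega_2$. Thus $s^*\cl = e(\omega_1\tensor\omega_2) = \cltens$, and tensoring with $\Cq{}$ yields $s^*\cxl = \cxltens$. Multiplicativity of the equivariant Euler class under direct sum (which in this extended-grading theory is a consequence of the Thom isomorphism together with the identification of the Thom space of a sum as the smash product of Thom spaces) gives $s^*\cw = e(\omega_1\dirsum\omega_2) = \clo\clt$, and the twisted version $s^*\cxw = \cxlo\cxlt$.

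For the $\zeta$-classes, the key observation is that each of $s^*\zeta_0$, $s^*\zeta_1$, $s^*\zeta_2$ and each of the proposed images $\zeta_{00}$, $\zeta_{01}\zeta_{10}$, $\zeta_{11}$ lies in an even grading, since each $\Omega_{ij}$ contributes $0$ to the $a+b\sigma$ part. Hence by \cite[Corollary~3.5]{CH:bu2} the fixed-set restriction is injective on these gradings, so it suffices to check the identities after applying $\eta$. The map $s$ on fixed sets sends $B^{00}$ (both summands trivial) to the all-trivial component of $BU(2)^\GG$, sends both $B^{01}$ and $B^{10}$ to the mixed component $\C\dirsum\Cq{}$, and sends $B^{11}$ to the all-sign component. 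Combined with the values of $\eta$ tabulated in Proposition~\ref{prop:restrictions} and the analogous fixed-point values for $\zeta_0,\zeta_1,\zeta_2$ in the cohomology of $BU(2)$ established in \cite{CH:bu2}, each identity reduces to a componentwise comparison.

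The main obstacle is purely bookkeeping: one must first check that the induced map on grading groups sends $\Omega_0^{BU(2)}\mapsto\Omega_{00}$, $\Omega_1^{BU(2)}\mapsto\Omega_{01}+\Omega_{10}$, and $\Omega_2^{BU(2)}\mapsto\Omega_{11}$, so that both sides of each proposed equation sit in the same grading, and then trace through the restriction formulas carefully. In particular, one must remember that on the "home" component the $\zeta$-classes restrict to an expression involving $\xi$ and the inverses of the remaining $\zeta$'s, while on the other components they restrict to themselves; once this is laid out, all seven formulas drop out.
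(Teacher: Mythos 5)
Your proposal is correct and matches the paper's own argument: the paper likewise derives $s^*\cw$, $s^*\cxw$, $s^*\cl$, $s^*\cxl$ from naturality of Euler classes applied to $s^*\omega=\omega_1\dirsum\omega_2$ and $s^*\ext^2\omega=\omega_1\tensor\omega_2$, and disposes of the $\zeta$'s by noting they reflect how $s$ maps fixed-set components. Your extra step of invoking injectivity of $\eta$ in even gradings to verify the $\zeta$-identities componentwise is just a more explicit rendering of the paper's one-line appeal to the fixed-set structure, in the same spirit as its verification of the relations in Corollary~\ref{cor:relations}.
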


\begin{proof}
These calculations follow from general principles: 
The $\zeta$s reflect the mapping of fixed-set components.
The other elements are all Euler classes, so we have, for example,
\begin{align*}
 s^*\cw &= s^*e(\omega) = e(s^*\omega) \\
 	&= e(\omega_1\dirsum\omega_2) = e(\omega_1)e(\omega_2) = \clo \clt  \\
\intertext{and}
 s^*\cl  &= s^*e(\Lambda^2\omega) = e(s^*\Lambda^2\omega) = e(\Lambda^2s^*\omega) \\
    &= e(\Lambda^2(\omega_1\dirsum\omega_2)) = e(\omega_1\tensor\omega_2) = \cltens . \qedhere
\end{align*}

\end{proof}

To see that the cohomology of $B U(2)$ is not simply the symmetric part of the cohomology
of $B T^2$, we have the following calculation, which shows that the cohomology
of $B U(2)$ does not inject into the cohomology of $B T^2$.

\begin{example}\label{ex:not injective}
The nonzero element 
\[
 \tau(\iota^{-3}) \zeta_0^2\zeta_1^2 \cw \in  H_\GG^{3 + \sigma + \Omega_1}(B U(2)_+)
\]
maps to 0 in $ H_\GG^{3+\sigma+\Omega_{01}+\Omega_{10}}(B T^2_+)$:
We first compute
\begin{align*}
 s^*(\zeta_0^2\zeta_1^2 \cw)
  &= \zeta_{00}^2\zeta_{01}^2\zeta_{10}^2 \clo \clt  \\
  &= \xi\zeta_{01}^2\clo\cxlt + e^2\zeta_{00}\zeta_{01}^2\zeta_{10}\clo.
\end{align*}
We now observe that both
\begin{align*}
 \tau(\iota^{-3}) \cdot \xi &= 0 \\
\intertext{and}
 \tau(\iota^{-3}) \cdot e^2 &= 0,
\end{align*}
so $\tau(\iota^{-3}) \zeta_0^2\zeta_1^2 \cw$ maps to 0 as claimed.
However, $\zeta_0^2\zeta_1^2 \cw$ is an element of the basis
given in \cite{CH:bu2} and $\tau(\iota^{-3})\neq 0$, so 
$\tau(\iota^{-3}) \zeta_0^2\zeta_1^2 \cw \neq 0$.

This example also shows that $\eta$ need not be injective in odd gradings
(see Definition~\ref{def:even grading}).
(Hence, the diagrams used in the proof of Proposition~\ref{prop:symmetric} below
need not be pullbacks in such gradings.)
Using the calculations of $\eta$ from \cite{CH:bu2}, we can see that
$\eta(\tau(\iota^{-3})\zeta_0^2\zeta_1^2\cw) = 0$.
Simpler examples of this phenonemon exist, but this is the simplest we have found
also showing that $s^*$ is not injective in general.

\end{example}

The following result shows that the problem occurs only in odd gradings.

\begin{proposition}\label{prop:symmetric}
Restricted to the subgroup of even gradings $\ROev(\Pi BU(2))$, $s^*$ gives an isomorphism
\[
    H_\GG^{\ROev(\Pi BU(2))}(B_G U(2)_+) \iso H_\GG^{\ROev(\Pi BU(2))}(B_G T^2_+)^{\Sigma_2}.
\]
\end{proposition}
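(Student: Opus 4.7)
The plan is to use the fixed-set restriction maps $\eta$ to reduce the claim to a computation that is essentially nonequivariant. The map $s$ is invariant under the involution $\tau\colon BT^2\to BT^2$ swapping the two factors, because $\omega_1\dirsum\omega_2\iso\omega_2\dirsum\omega_1$, so $s^*$ automatically lands in the $\Sigma_2$-invariant subring. On the generators, $\tau^*$ interchanges $\zeta_{01}\leftrightarrow\zeta_{10}$, $\clo\leftrightarrow\clt$, and $\cxlo\leftrightarrow\cxlt$, and fixes $\zeta_{00}$, $\zeta_{11}$, $\cltens$, and $\cxltens$.

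The centerpiece will be the commutative square whose top row is $s^*$ in $\ROev(\Pi BU(2))$-gradings and whose bottom row is the analogous map on the cohomology of the $\GG$-fixed sets, with the vertical maps being the restriction-to-fixed-points $\eta$. Both vertical maps are monomorphisms on even gradings by \cite[Corollary~3.5]{CH:bu2}. I analyze the bottom row componentwise. The fixed set $(BU(2))^\GG$ splits as $BU(2)\disjunion(BU(1)\times BU(1))\disjunion BU(2)$, indexed by the three isomorphism classes of rank-two complex $\GG$-representation (two trivial lines, one trivial plus one sign, two sign lines); the map $s^\GG$ sends $B^{00}$ to the trivial $BU(2)$ component, $B^{11}$ to the sign $BU(2)$ component, and both $B^{01}$ and $B^{10}$ to the middle $BU(1)\times BU(1)$. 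Using the classical $H^*(BU(2)_+;\Z)\iso H^*(BT^2_+;\Z)^{\Sigma_2}$ on the two outer components, together with the observation that a pair in $H^*(B^{01}_+;\Z)\dirsum H^*(B^{10}_+;\Z)$ invariant under the swap $B^{01}\leftrightarrow B^{10}$ is determined by its first coordinate and lifts uniquely to $H^*((BU(1)\times BU(1))_+;\Z)$, one verifies that the bottom $s^*$ is an isomorphism onto the $\Sigma_2$-invariants. The equivariant version in $\ROev(\Pi BU(2))$-grading follows by the same pattern using the explicit descriptions from Proposition~\ref{prop:fixedpoints}.

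Injectivity of the top $s^*$ on even gradings then follows by an immediate diagram chase. For surjectivity onto the $\Sigma_2$-invariants, any symmetric $z$ restricts to a symmetric element $\eta(z)$ in the fixed-set cohomology, which by the bottom isomorphism equals $(s^\GG)^*y$ for a unique $y$; the remaining task is to promote $y$ to an element of $H_\GG^{\ROev(\Pi BU(2))}(BU(2)_+)$. The cleanest route is to match $\HS$-bases: symmetrizing the $\HS$-basis of Proposition~\ref{prop:freeness} in even gradings should produce a basis for the $\Sigma_2$-invariants, and each such symmetric basis element should equal $s^*$ of a corresponding even-graded basis element of $H_\GG^\gr(BU(2)_+)$ from \cite{CH:bu2}.

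I expect the main obstacle to be precisely this last step: showing that the $y$ produced at the fixed-set level actually lies in $\im\eta_{BU(2)}$, not merely in the fixed-set cohomology. The diagrammatic injectivity is conceptually clean, but closing the surjectivity argument requires either the explicit description of $\im\eta_{BU(2)}$ from \cite{CH:bu2} or a bookkeeping-intensive basis comparison. The restriction to even gradings is essential throughout, as Example~\ref{ex:not injective} shows that $s^*$ fails to be injective in odd gradings, so no extension of the method beyond $\ROev$ is possible.
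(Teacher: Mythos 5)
Your first half is sound and runs parallel to the paper's opening moves: $s^*$ lands in the invariants, one restricts to fixed sets, $(BU(2))^\GG$ decomposes as $B^0\disjunion B^1\disjunion B^2$ with $s^\GG$ on the middle piece being $\id\disjunion(\text{swap})\colon B^{01}\disjunion B^{10}\to B^1$ so that its induced map is the inclusion of the $\Sigma_2$-invariants, and injectivity of $s^*$ in even gradings follows from the injectivity of $\eta$ by the diagram chase you describe.

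The gap is exactly where you flag it, and neither of your proposed repairs closes it as stated. The basis-symmetrization route fails at the first step: the $\HS$-basis of Proposition~\ref{prop:freeness} is built asymmetrically over the first factor and is not permuted by the swap involution even in gradings in $\ROev(\Pi BU(2))$ --- for instance $\zeta_{00}\zeta_{01}^2\zeta_{10}\clo$ is a basis element (it appears on the $\Omega_{01}+\Omega_{10}$ page in \S\ref{sec:bases}) whose swap $\zeta_{00}\zeta_{01}\zeta_{10}^2\clt$ is divisible by the excluded monomial $\zeta_{10}^2\clt$ and hence is not. Moreover, the freeness of $H_\GG^{\ROev}(BT^2_+)^{\Sigma_2}$ over the even part of $\HS$ is a \emph{consequence} of the proposition (see the remark following it), not an available input, and it genuinely fails outside even gradings by Example~\ref{ex:not injective}, so any basis argument would have to engage with the even/odd distinction in a way you have not. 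What actually closes the surjectivity step is the precise form of your other suggestion, the ``explicit description of $\im\eta$'': by \cite[Corollaries~2.3 and~2.4]{CH:bu2}, in even gradings the square comparing $X_+$, $X^\GG_+$, $X_+\smsh E\GG_+$, and $X^\GG_+\smsh E\GG_+$ is a pullback with injective horizontal maps, so your class $y$ lies in $\im\eta_{BU(2)}$ if and only if its further image in $H_\GG^{*}(BU(2)^\GG_+\smsh E\GG_+)$ lifts to $H_\GG^{*}(BU(2)_+\smsh E\GG_+)$. Since taking $\Sigma_2$-invariants preserves pullbacks, the whole proposition reduces to checking that $s$ induces isomorphisms onto invariants at the other three corners of the cube: the Borel corner, where $B^0_+\to BU(2)_+$ and $B^{00}_+\to BT^2_+$ are nonequivariant equivalences and the classical $H^\Z(BU(2)_+)\iso H^\Z(BT^2_+)^{\Sigma_2}$ applies, and the two fixed-point corners, which are your componentwise computation. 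You need to add this Borel/pullback leg of the argument; without it the promotion of $y$ to an equivariant class on $BU(2)$ is unjustified.
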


\begin{proof}
Consider the following diagram of groups graded on $\ROev(\Pi BU(2))$ 
(which has been replaced by ``$*_e$'' for clarity of the diagram).
By \cite[Corollaries~2.3 and~2.4]{CH:bu2}, the front and back faces are both pullback squares
in which the horizontal arrows are injections.
\[
    \xymatrix@C=-4em{
    & H_\GG^{*_e}(B T^2_+) \ar[rr] \ar[dd]|\hole && H_\GG^{*_e}((B T^2)^\GG_+) \ar[dd] \\
    H_\GG^{*_e}(B U(2)_+) \ar[ur] \ar[rr] \ar[dd] && H_\GG^{*_e}(B U(2)^\GG_+) \ar[ur] \ar[dd] \\
    & H_\GG^{*_e}(B T^2_+\smsh E\GG_+) \ar[rr]|(.465)\hole && H_\GG^{*_e}((B T^2)^\GG_+\smsh E\GG_+) \\
    H_\GG^{*_e}(B U(2)_+\smsh E\GG_+) \ar[ur] \ar[rr] && H_\GG^{*_e}(B U(2)^\GG_+\smsh E\GG_+) \ar[ur]
    }
\]
Taking the $\Sigma_2$-fixed part of such pullback diagrams results in pullback diagrams, so it suffices to
show that $s$ induce isomorphisms
\begin{align}
    H_\GG^{\ROev(\Pi BU(2))}(B U(2)_+\smsh E\GG_+) &\iso H_\GG^{\ROev(\Pi BU(2))}(B T^2_+\smsh E\GG_+)^{\Sigma_2}
        \label{eqn:lower left}\\
    H_\GG^{\ROev(\Pi BU(2))}(B U(2)^\GG_+) &\iso H_\GG^{\ROev(\Pi BU(2))}((B T^2)^\GG_+)^{\Sigma_2}
        \label{eqn:upper right}\\
\intertext{and}
    H_\GG^{\ROev(\Pi BU(2))}(B U(2)^\GG_+\smsh E\GG_+) &\iso H_\GG^{\ROev(\Pi BU(2))}((B T^2)^\GG_+\smsh E\GG_+)^{\Sigma_2}.
        \label{eqn:lower right}
\end{align}
For (\ref{eqn:lower left}), the inclusions $B^0_+\to B U(2)_+$ and $B^{00}_+\to B T^2_+$ are nonequivariant equivalences,
hence induce equivariant equivalences on smashing with $E\GG_+$.
Since
\begin{align*}
    H_\GG^{RO(\Pi BU(2))}(B^0_+\smsh E\GG_+) &\iso H^{RO(\Pi BU(2))}(BU(2)_+;\Z)\tensor H_\GG^{RO(\GG)}(E\GG_+) \\
\intertext{and}
    H_\GG^{RO(\Pi BU(2))}(B^{00}_+\smsh E\GG_+) &\iso H^{RO(\Pi BU(2))}(BT^2_+;\Z)\tensor H_\GG^{RO(\GG)}(E\GG_+)
\end{align*}
by \cite[Proposition~7.2]{Co:InfinitePublished},
isomorphism (\ref{eqn:lower left}) follows from the nonequivariant result that
\[
    H^\Z(BU(2)_+) \iso H^\Z(BT^2_+)^{\Sigma_2}
\]
is the ring of symmetric polynomials in two variables.

For (\ref{eqn:upper right}), we can look at what happens to each component of $B U(2)^\GG$.
We consider the three induced maps
\begin{align*}
    s^0\colon BT^2 \hmtpc B^{00} &\to B^0 \hmtpc BU(2) \\
    s^1\colon BT^2\disjunion BT^2 \hmtpc B^{01}\disjunion B^{10} &\to B^1 \hmtpc BT^2 \\
    s^2\colon BT^2 \hmtpc B^{11} &\to B^2 \hmtpc BU(2).
\end{align*}
$s^0$ and $s^2$ are the usual maps and the nonequivariant result applies to show that
$(s^0)^*$ and $(s^2)^*$ are each the inclusion of the $\Sigma_2$-fixed part.
$s^1$ can be taken to be the identity on the first copy of $BT^2$ and the swap map on the second copy;
$\Sigma_2$ acts on the source by interchanging the two copies of $BT^2$ while also swapping the factors.
We can then see that $(s^1)^*$ is inclusion of the $\Sigma_2$-fixed part.

Finally, (\ref{eqn:lower right}) follows on taking (\ref{eqn:upper right}) and tensoring
with $H_\GG^{RO(\GG)}(E\GG_+)$.
\end{proof}

\begin{remark}\label{rem:general splitting principle}
The argument generalizes to show that the cohomology of $B U(n)$ is the symmetric
part of the cohomology of $B T^n$, when restricted to even gradings.
However, we currently do not have a good understanding of either cohomology when $n\geq 3$,
outside of the gradings covered by Proposition~\ref{prop:kunneth}.
\end{remark}

\begin{remark}
This result tells us that $H_\GG^{\ROev(\Pi BU(2))}(B_G T^2_+)^{\Sigma_2}$ is free as a module
over $H_\GG^{RO(\GG)_e}(S^0)$, the cohomology of a point in even gradings.
However, Example~\ref{ex:not injective} shows that this is false if we do not restrict to even gradings.
The element $s^*(\zeta_0^2\zeta_1^2\cw)$ generates a
``fake'' copy of $\HS$: It looks like $\HS$ in even gradings, but not in all gradings.
\end{remark}


\begin{remark}
Proposition~\ref{prop:symmetric} allows us to redo the calculation of the dual class $\cwd$
from \cite{CH:bu2}. We have
\begin{align*}
    s^*(\cwd) &= \clod\cltd \\
    &= (1-\epsilon_1)(1-\epsilon_2)\clo\clt &&\text{by Proposition~\ref{prop:duals}} \\
    &= (1-e^{-2}\kappa\zeta_{00}\zeta_{11}\cltens)\clo\clt &&\text{as in the proof of \ref{prop:duals}} \\
    &= s^*((1-\epsilon_\lambda)\cw)
\end{align*}
using the notation of \cite{CH:bu2}. Because we are in an even grading, $s^*$ is injective, hence
\[
    \cwd = (1-\epsilon_\lambda)\cw,
\]
matching the result proven in \cite[Proposition~8.5]{CH:bu2} by a different method.
\end{remark}

Nonequivariantly, we know that the cohomology of $BT^2$ is a free module over the cohomology of
$BU(2)$, of rank 2. 
Example~\ref{ex:not injective} shows that this cannot be true equivariantly if we allow all gradings,
but leaves open the possibility that it might be true if we restrict to even gradings.
However, we shall show that the equivariant cohomology of $B T^2$ is not free over the
cohomology of $B U(2)$ even with that restriced grading.

Before showing that failure, we first look at what we can say positively.
Part of the structure is given by the following result, which we also use
in \S\ref{sec:pushforward}.
We continue to restrict the grading on all modules in sight to $RO(\Pi BU(2))$,
as that will suffice for the applications we have in mind.

\begin{proposition}\label{prop:additive over bu2}
As an $H_\GG^{RO(\Pi BU(2))}(B U(2)_+)$-module,
$H_\GG^{RO(\Pi BU(2))}(B T^2_+)$ is generated by
$1$, $\zeta_{01}\clo$, $\zeta_{10}\cxlo$, and $\clo\cxlo$.
As an $H_\GG^{RO(\Pi BU(2))}(B U(2)_+)$-algebra, it is generated by
$\zeta_{01}\clo$, $\zeta_{10}\cxlo$, and $\clo\cxlo$.
\end{proposition}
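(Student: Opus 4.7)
The algebra statement will follow from the module statement together with closure of the generating module $M$ under multiplication by $A$, $B$, $C$. Let $M$ denote the $s^*H_\GG^{RO(\Pi BU(2))}(B U(2)_+)$-submodule of $H_\GG^{RO(\Pi BU(2))}(B T^2_+)$ generated by $\{1, A, B, C\}$ where $A = \zeta_{01}\clo$, $B = \zeta_{10}\cxlo$, and $C = \clo\cxlo$; the plan is to show in two stages that $M$ equals the full cohomology.

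The first stage will establish closure of $M$ under multiplication by $A$, $B$, and $C$. First I would rearrange relations 4, 5, and 8 of Theorem~\ref{thm:main} to place
\[
\zeta_{10}\clt = s^*(\zeta_2\cl) - A + e^{-2}\kappa\,s^*(\zeta_0\zeta_1\cw),\qquad
\zeta_{01}\cxlt = s^*(\zeta_0\cl) - B + e^{-2}\kappa\,s^*(\zeta_1\zeta_2\cxw),
\]
and $\clt\cxlt = s^*(\cl\cxl) - C - \tau(\iota^{-2})\,s^*(\zeta_0^2\zeta_1\cw)$ in $M$. Then I would multiply relation 5 by $A$ and use $AB = s^*\zeta_1\cdot C$ together with $\zeta_{10}\clt\cdot A = s^*(\zeta_1\cw)$ to obtain
\[
A^2 = s^*\bigl(\zeta_2\cl + e^{-2}\kappa\,\zeta_0\zeta_1\cw\bigr)\cdot A - s^*(\zeta_1\cw),
\]
with $B^2$ arising from relation 4 times $B$ by a symmetric calculation, and $C^2 \in s^*H^*(BU(2))\cdot\{1,C\}$ obtained by multiplying the $\clt\cxlt$-identity by $C$ (using $C\cdot\clt\cxlt = s^*(\cw\cxw)$). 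Finally, multiplying relation 5 by $C$ and using $\zeta_{10}\clt\cdot C = s^*\cw\cdot B$ will yield $AC = s^*(\zeta_2\cl + e^{-2}\kappa\zeta_0\zeta_1\cw)\cdot C - s^*\cw\cdot B$, and relation 4 times $C$ will give $BC \in M$ by an analogous argument.

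The second stage will show that every $\HS$-basis monomial of Proposition~\ref{prop:freeness} lying in $RO(\Pi BU(2))$-grading factors as a product of atoms from
\[
\{\zeta_{00},\ \zeta_{11},\ \zeta_{01}\zeta_{10},\ \cltens,\ \cxltens,\ \clo\clt,\ \cxlo\cxlt,\ A,\ B,\ C,\ \zeta_{10}\clt,\ \zeta_{01}\cxlt,\ \clt\cxlt\},
\]
each of which lies in $M$. The procedure I would follow is: first absorb all $\cltens$ and $\cxltens$ factors via $\cltens = s^*\cl$ and $\cxltens = s^*\cxl$; next pair off $\clo$ with $\clt$ and $\cxlo$ with $\cxlt$ as powers of $s^*\cw$ and $s^*\cxw$; if residual $\clt$ and $\cxlt$ factors both remain, combine them as copies of $\clt\cxlt$; then use the $RO(\Pi BU(2))$-grading balance $a_{01} - a_{10} = k_1 - k_2 + \ell_2 - \ell_1$ (where $a_{ij}$ are the exponents of $\zeta_{ij}$ and $k_1, k_2, \ell_1, \ell_2$ those of $\clo, \cxlo, \clt, \cxlt$) to match the remaining imbalanced Euler-class factors with an equal number of imbalanced $\zeta_{01}$- or $\zeta_{10}$-factors as copies of $A$, $B$, $\zeta_{10}\clt$, or $\zeta_{01}\cxlt$; combine any remaining balanced $\clo$-$\cxlo$ pairs as copies of $C$; the leftover $\zeta$-monomial then has $a_{01}=a_{10}$ and assembles as $s^*(\zeta_0^{a_{00}}\zeta_1^{a_{01}}\zeta_2^{a_{11}})$. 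The main obstacle will be executing this factorization uniformly across the cases determined by the signs of $a_{01}-a_{10}$, $k_1-k_2$, and $\ell_1-\ell_2$, while checking that the basis exclusions of Proposition~\ref{prop:freeness} never obstruct a step.
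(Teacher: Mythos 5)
Your proposal is correct and follows essentially the same route as the paper's proof: the same reduction of monomials using $\zeta_{00},\zeta_{11},\cltens,\cxltens\in\im s^*$ and the pairings $\zeta_{01}\zeta_{10}=s^*\zeta_1$, $\clo\clt=s^*\cw$, $\cxlo\cxlt=s^*\cxw$, the same grading-balance condition on exponents, the same elimination of $\zeta_{10}\clt$, $\zeta_{01}\cxlt$, $\clt\cxlt$ via relations 4, 5, 8, and the same quadratic identities for products of the generators (your expression for $AC$ differs from the paper's only by which relation is multiplied through, both being valid). The only difference is organizational: you establish closure under multiplication first and then factor basis monomials, while the paper does these in the opposite order.
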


\begin{proof}
We know that, as a ring, $H_\GG^{RO(\Pi T)}(B T^2_+)$ is generated by
$\zeta_{ij}$, $\clo$, $\cxlo$, $\clt$, $\cxlt$, $\cltens$, and $\cxltens$.
We consider what monomials in these elements we need in order to generate the cohomology in
gradings in $RO(\Pi B U(2))$.
The elements $\zeta_{00}$, $\zeta_{11}$, $\cltens$, and $\cxltens$ are in the image of $s^*$,
so we do not need to include them when looking for module generators. That leaves us to consider monomials
of the form
\[
    \zeta_{01}^{m_1}\zeta_{10}^{m_2}\clo^{k_1}\cxlo^{\ell_1}\clt^{k_2}\cxlt^{\ell_2}.
\]
Because $\zeta_{01}\zeta_{10} = s^*(\zeta_1)$, $\clo\clt = s^*(\cw)$, and $\cxlo\cxlt = s^*(\cxw)$,
we need only consider cases where $m_1m_2 = 0$, $k_1k_2 = 0$, and $\ell_1\ell_2 = 0$.
Further, in order for the monomial above to have grading in $RO(\Pi B U(2))$, we must have
\[
    m_1 - k_1 + \ell_1 = m_2 - k_2 + \ell_2.
\]
If we now work through the eight possible cases (of which of $m_1$ and $m_2$ is nonzero, etc.), 
we see that each monomial having grading in $RO(\Pi BU(2))$ is a product of powers of
the elements $\zeta_{01}\clo$, $\zeta_{10}\cxlo$, $\zeta_{10}\clt$, $\zeta_{01}\cxlt$,
$\clo\cxlo$, and $\clt\cxlt$. 
We can remove $\zeta_{10}\clt$, $\zeta_{01}\cxlt$, and $\clt\cxlt$ from this list because of the identities
\begin{align*}
    \zeta_{10}\clt &= s^*(\zeta_2\cl + e^{-2}\kappa\zeta_0\zeta_1\cw) - \zeta_{01}\clo \\
    \zeta_{01}\cxlt &= s^*(\zeta_0\cl + e^{-2}\kappa\zeta_1\zeta_2\cxw) - \zeta_{10}\cxlo  \quad\text{and}\\
    \clt\cxlt &= s^*(\cl\cxl - \tau(\iota^{-2})\zeta_0^2\zeta_1\cw) - \clo\cxlo.
\end{align*}
This suffices to show that $\zeta_{01}\clo$, $\zeta_{10}\cxlo$, and $\clo\cxlo$ generate multiplicatively.

Further, we have the following identities writing products of these elements as linear combinations
of them and 1:
\begin{align*}
    (\zeta_{01}\clo)^2 &= s^*(\zeta_2\cl + e^{-2}\kappa\zeta_0\zeta_1\cw)\zeta_{01}\clo - s^*(\zeta_1\cw)\cdot 1 \\
    (\zeta_{10}\cxlo)^2 &= s^*(\zeta_0\cl + e^{-2}\kappa\zeta_1\zeta_2\cxw)\zeta_{10}\cxlo - s^*(\zeta_1\cxw)\cdot 1 \\
    (\clo\cxlo)^2 &= s^*(\cl\cxl - \tau(\iota^{-2})\zeta_0^2\zeta_1\cw)\clo\cxlo - s^*(\cw\cxw)\cdot 1 \\
    \zeta_{01}\clo\cdot\zeta_{10}\cxlo &= s^*(\zeta_1)\clo\cxlo \\
    \zeta_{01}\clo\cdot\clo\cxlo &= 
        s^*(\cl\cxl - \tau(\iota^{-2})\zeta_0^2\zeta_1\cw)\zeta_{01}\clo + s^*(\cw)\zeta_{10}\cxlo \\
        &\qquad -s^*(\zeta_0\cl\cw + e^{-2}\kappa\zeta_1\zeta_2\cw\cxw)\cdot 1 \\
    \zeta_{10}\cxlo\cdot\clo\cxlo &= 
        s^*(\cl\cxl - \tau(\iota^{-2})\zeta_0^2\zeta_1\cw)\zeta_{10}\cxlo + s^*(\cxw)\cdot\zeta_{01}\clo \\
        &\qquad -s^*(\zeta_1\cl\cxw + e^{-2}\kappa\zeta_0\zeta_1\cw\cxw)\cdot 1 
\end{align*}
Thus, the elements 1, $\zeta_{01}\clo$, $\zeta_{10}\cxlo$, and $\clo\cxlo$ generate additively.
\end{proof}

\begin{remark}
The relation 
\[
    (\zeta_{01}\clo)^2 - s^*(\zeta_2\cl + e^{-2}\kappa\zeta_0\zeta_1\cw)\zeta_{01}\clo + s^*(\zeta_1\cw)\cdot 1 = 0
\]
used in the proof reduces to the nonequivariant relation
\[
    x_1^2 - c_1 x_1 + c_2 = 0
\]
given by the projective bundle formula. 
\end{remark}

\begin{remark}\label{rem:relations}
Although the set $\{ 1, \zeta_{01}\clo, \zeta_{10}\cxlo, \clo\cxlo \}$ generates additively, it does
not form a basis. There are a number of relations, including some not-at-all-obvious ones like
\[
    s^*(\zeta_0\zeta_1\cxl)\zeta_{01}\clo = \xi\clo\cxlo + s^*(\zeta_0^2\zeta_1\cw)\cdot 1.
\]
We will not attempt to write down a complete set of relations.
\end{remark}

\subsection{It is not free}
Remark~\ref{rem:relations} tells us that an easily found set of generators
does not form a basis over $H_\GG^{RO(\Pi BU(2))}(B U(2)_+)$. 
In fact, there can be no basis.
To see that, we first simplify the structure greatly by doing the following.

\begin{definition}
Let $N = N^{RO(\GG)}\subset \HS$ be the ideal generated by the elements in non-zero gradings.
\end{definition}

\begin{proposition}\label{prop:ideal structure}
$Q = \HS/N$ is $\Z/2$ concentrated in grading $0$.
\end{proposition}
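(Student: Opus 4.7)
The plan is to analyze $\HS/N$ by first reducing to grading zero and then computing the resulting quotient of the Burnside ring.

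First, for any non-zero grading $\alpha$ and any $x \in \HS^\alpha$, one has $x = x \cdot 1 \in N$, since $x$ itself lies in a non-zero grading. Hence $\HS^\alpha \subseteq N$ and so $Q^\alpha = 0$ for $\alpha \neq 0$. It remains to compute $Q^0 = \HS^0/(\HS^0 \cap N)$. Since $\HS^0$ is the Burnside ring of $\GG$, we may identify $\HS^0 \iso \Z[\kappa]/(\kappa^2 - 2\kappa) \iso \Z\{1, g\}$ with $g = 2 - \kappa$.

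Next, I would exhibit two key elements of $\HS^0 \cap N$. The defining relation $(e^{-2}\kappa) \cdot e^2 = \kappa$ expresses $\kappa$ as a product of elements in the non-zero gradings $-2\sigma$ and $2\sigma$, so $\kappa \in N$. By Frobenius reciprocity applied to $\tau(\iota^{-2}) \in \HS^{2-2\sigma}$ and $\xi \in \HS^{2\sigma-2}$, using that $\rho(\xi) = \iota^2$,
\[
\tau(\iota^{-2}) \cdot \xi = \tau\!\bigl(\iota^{-2} \cdot \rho(\xi)\bigr) = \tau(1) = g,
\]
so $g \in N$ as well. As ideals of $\HS^0$, $(\kappa, g) = (2, g)$ since $2 = \kappa + g$, and $\HS^0/(2, g) \iso \Z/2$ via the augmentation $a + bg \mapsto a \bmod 2$. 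Consequently, $Q^0$ is a quotient of $\Z/2$.

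Finally, to see that $Q^0$ is actually $\Z/2$ and not zero, I would exhibit a non-trivial ring homomorphism $\HS \to \Z/2$ that annihilates $N$. The natural candidate is the map defined to be zero on $\HS^\alpha$ for $\alpha \neq 0$ and to send $a + bg \in \HS^0$ to $a \bmod 2$. The main obstacle will be checking this is a ring map: the only non-trivial case is a product $xy \in \HS^0$ arising from $x \in \HS^\alpha$ and $y \in \HS^{-\alpha}$ with $\alpha \neq 0$, where one must verify that the coefficient of $1$ in $xy \in \HS^0$ is always even. This amounts to the claim that every such ``opposite-grading'' product lies in the ideal $(2, g)$, and is the essential input from the Mackey-functor structure governing the cohomology of a point.
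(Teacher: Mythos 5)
Your argument coincides with the paper's for everything except the final nontriviality step. Like the paper, you observe that $N$ contains every homogeneous element of nonzero grading (so $Q$ is concentrated in grading $0$), that $\kappa = e^2\cdot e^{-2}\kappa$ and $g = \xi\cdot\tau(\iota^{-2})$ lie in $N$, hence $2 = \kappa + g \in N$, and that $A(\GG)/\rels{2,g}\iso\Z/2$; your Frobenius-reciprocity computation of $\xi\cdot\tau(\iota^{-2})=g$ is a correct justification of a product the paper simply quotes.

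The gap is in the last step. To show $Q^0\neq 0$ you must show $1\notin N$, equivalently $N\cap\HS^0\subseteq\rels{2,g}$, and you reduce this to the claim that for every $\alpha\neq 0$ and every $x\in\HS^{\alpha}$, $y\in\HS^{-\alpha}$, the product $xy$ has even coefficient of $1$ in $A(\GG)=\Z\{1,g\}$. You correctly flag this as ``the essential input,'' but you give no argument for it, and it is exactly where all of the content of the nontriviality lies: without it your ring map to $\Z/2$ is not known to be well defined, and $Q^0$ could a priori be $0$. The claim is true --- from the known multiplicative structure of $\HS$, the only grading-zero products of oppositely and nontrivially graded homogeneous elements are multiples of $\kappa$ or of $g$ --- but it has to be proved or cited, not merely named. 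The paper closes this step by a different device, invoking its classification of the units of $\HS$ (only $\pm 1$ and $\pm(1-\kappa)$, all concentrated in grading $0$) to conclude that $1$ cannot lie in $N$. Either route can be made to work, but as written your proposal establishes only the upper bound ($Q^0$ is a quotient of $\Z/2$) and stops short of the lower bound $Q^0\neq 0$.
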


\begin{proof}
All of $\HS/N$ lives in grading $0$ by the definition of $N$.
In grading 0, where $H_\GG^0(S^0) = A(\GG)$, we have the elements $g = \xi\cdot\tau(\iota^{-2}) \in N$
and $\kappa = e^2\cdot e^{-2}\kappa\in N$, hence also $2 = \kappa + g\in N$.
On the other hand, we cannot have $1\in N$, because the only units in $\HS$ are $\pm 1$ and $\pm(1-\kappa)$,
which live in grading 0. Therefore, $\HS/N$ must have at least one nonzero
element, the image of 1, but it has no others because $g\in N$ and $2\in N$
and $A(\GG)/\rels{2,g} \iso \Z/2$.
\end{proof}

We can now mod out the action of $N$ on all of the modules in sight, meaning taking
$-\tensorS \HS/N$, to get modules over $Q$. 
Free modules over $\HS$ will become free modules over $Q$ with the same bases.
Further, if $H_\GG^{RO(\Pi BU(2))}(B T^2_+)$ were free over $H_\GG^{RO(\Pi BU(2))}(B U(2)_+)$,
then $H_\GG^{RO(\Pi BU(2))}(B T^2_+)/N$ would remain free over
$H_\GG^{RO(\Pi BU(2))}(B U(2)_+)/N$.
Note that, since $N$ contains all the elements in the cohomology of a point with odd gradings,
the same statement is true if we restrict the grading to $\ROev(\Pi BU(2))$.

\begin{proposition}
$H_\GG^{RO(\Pi BU(2))}(B T^2_+)/N$ is not free
$H_\GG^{RO(\Pi BU(2))}(B U(2)_+)/N$.
\end{proposition}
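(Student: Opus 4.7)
The plan is to exhibit a non-trivial $R/N$-syzygy among the generators of $M/N$ that cannot be consistent with any free structure, where for convenience I write $M = H_\GG^{RO(\Pi BU(2))}(B T^2_+)$ and $R = H_\GG^{RO(\Pi BU(2))}(B U(2)_+)$.

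First I would reduce the relations of Corollary~\ref{cor:relations} modulo $N$. By Proposition~\ref{prop:ideal structure}, each of $\kappa$, $e^2$, $\tau(\iota^{-2})$, and $e^{-2}\kappa$ lies in $N$. The second relation of Corollary~\ref{cor:relations} therefore simplifies in $M/N$ to
\[
    s^*(\zeta_2)\cdot \zeta_{10}\cxlo \;=\; \zeta_{10}\zeta_{11}\cxlo \;=\; \zeta_{00}\zeta_{01}\clo \;=\; s^*(\zeta_0)\cdot \zeta_{01}\clo,
\]
an explicit $R/N$-linear dependence between two of the four generators listed in Proposition~\ref{prop:additive over bu2}.

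Second, I would show that any free generating set for $M/N$ over $R/N$ must contain both $\zeta_{01}\clo$ and $\zeta_{10}\cxlo$. Since Proposition~\ref{prop:additive over bu2} shows that $\{1, \zeta_{01}\clo, \zeta_{10}\cxlo, \clo\cxlo\}$ generates, any basis is a subset of these four elements. To rule out dropping $\zeta_{01}\clo$, I would compute in the grading $\gamma = \grad(\zeta_{01}\clo) = 2 + \Omega_{01} + \Omega_{10} + \Omega_{11}$. By Proposition~\ref{prop:freeness}, $\dim_{\mathbb{F}_2}(M/N)_\gamma = 2$, spanned by the basis monomials $\zeta_{01}\clo$ and $\zeta_{10}\clt$. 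On the other hand, the $R/N$-span of $\{1, \zeta_{10}\cxlo, \clo\cxlo\}$ in grading $\gamma$ is at most one-dimensional: the pieces from $1$ and from $\clo\cxlo = s^*(\cw)$ both lie in the image of $s^*$, whose contribution here is generated by $s^*(\zeta_2\cl) = \zeta_{11}\cltens \equiv \zeta_{01}\clo + \zeta_{10}\clt \pmod N$ (using reduction R9 from the proof of Proposition~\ref{prop:freeness}), while the shift needed to reach $\gamma$ from $\zeta_{10}\cxlo$ lands in an $R/N$-grading with no $\HS$-basis monomials, so contributes nothing. Since $\zeta_{01}\clo + \zeta_{10}\clt \ne \zeta_{01}\clo$, the element $\zeta_{01}\clo$ lies outside this one-dimensional span, and hence must appear in every generating subset. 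The symmetric argument, interchanging the roles of $\omega_1$ with $\chi\omega_1$, forces $\zeta_{10}\cxlo$ to be indispensable as well.

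Finally, with both $\zeta_{01}\clo$ and $\zeta_{10}\cxlo$ appearing as basis elements of a supposed free presentation, the identity displayed above becomes a nontrivial $R/N$-linear combination of basis elements that equals zero. Comparing basis coefficients forces $s^*(\zeta_0) = 0$ and $s^*(\zeta_2) = 0$ in $R/N$. But $s^*(\zeta_0) = \zeta_{00}$ and $s^*(\zeta_2) = \zeta_{11}$ appear among the $\HS$-basis monomials of $M$ by Proposition~\ref{prop:freeness}, so they are nonzero in $M/N$; since $s^*$ is a ring map, this forces $\zeta_0$ and $\zeta_2$ to be nonzero in $R/N$, contradiction. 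The hard part will be the dimension bookkeeping in the second step: verifying that no $\HS$-basis monomial of $R$ survives in the grading $\gamma - \grad(\zeta_{10}\cxlo)$ requires a careful look at the basis for $R$ from \cite{CH:bu2}, but once this is done the rest of the argument is an immediate consequence of the freeness result in Proposition~\ref{prop:freeness}.
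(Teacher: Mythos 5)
Your overall strategy---reduce the relations modulo $N$, extract the syzygy $s^*(\zeta_2)\cdot\zeta_{10}\cxlo=s^*(\zeta_0)\cdot\zeta_{01}\clo$, and argue that it is incompatible with freeness---is close in spirit to the paper's proof, and your final step (a nonzero coefficient relation between two distinct basis elements forces those coefficients to vanish in $H_\GG^{RO(\Pi BU(2))}(BU(2)_+)/N$, contradicting $\zeta_{00}\neq 0\neq\zeta_{11}$ in $H_\GG^{RO(\Pi BU(2))}(BT^2_+)/N$) is sound. The problem is the bridge: you assert that because $\{1,\zeta_{01}\clo,\zeta_{10}\cxlo,\clo\cxlo\}$ generates the module (Proposition~\ref{prop:additive over bu2}), ``any basis is a subset of these four elements.'' That is false as a general principle---a generating set of a free module need not contain any basis (already $\{e_1,e_2\}$ fails to contain the basis $\{e_1+e_2,e_2\}$ of $R^2$)---and nothing in your grading computation repairs it. What your second step actually shows is that $\zeta_{01}\clo$ does not lie in the $R/N$-span of the other three \emph{given} generators, i.e.\ that it is indispensable in any generating subset of those four; it does not show that $\zeta_{01}\clo$ itself, rather than some other homogeneous element (possibly in a different grading, or differing from $\zeta_{01}\clo$ by a multiple of $1$), must occur in a hypothetical basis. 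Since your contradiction needs both $\zeta_{01}\clo$ and $\zeta_{10}\cxlo$ to literally be basis elements, this gap is fatal as written. There is also a small factual slip: $\clo\cxlo\neq s^*(\cw)$ (one has $s^*\cw=\clo\clt$); your conclusion that $\clo\cxlo$ contributes nothing in the grading $\gamma$ happens to be right, but for degree reasons rather than because $\clo\cxlo$ is in the image of $s^*$.

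The paper closes exactly this gap by arguing grading by grading: over $Q=\HS/N\iso\Z/2$ the group in grading $0$ is one-dimensional, so $1$ must literally be a basis element; the group in grading $2\sigma$ is two-dimensional with only three nonzero elements, which pins the second relevant basis element down to $\zeta_{01}\clo$ or $\zeta_{10}\clt$ up to a multiple of $1$; and then a count of $\Z/2$-dimensions on the $RO(\GG)$ and $\Omega_0+RO(\GG)$ pages (witnessed by the relation $\zeta_0\zeta_1\cxl\cdot\zeta_{01}\clo=\zeta_0^2\zeta_1\cw\cdot 1$) shows the two resulting summands overlap in grading $4\sigma$. To salvage your version you would need an analogous dimension-count (or graded Nakayama-type) argument forcing $\zeta_{01}\clo$ and $\zeta_{10}\cxlo$ into every homogeneous basis, which is essentially the work your proposal defers.
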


\begin{proof}
As an algebra over $Q$, $H_\GG^\gr(B T^2_+)/N$ is described
as in Theorem~\ref{thm:main}, except that the relations simplify to the following:
\begin{align*}
    {\textstyle \prod_{i,j}\zeta_{ij}} &= 0 \\
    \zeta_{10}\zeta_{11}\cxlo &= \zeta_{00}\zeta_{01}\clo \\
    \zeta_{01}\zeta_{11}\cxlt &= \zeta_{00}\zeta_{10}\clt \\
    \zeta_{00}\cltens &= \zeta_{10}\cxlo + \zeta_{01}\cxlt \\
    \zeta_{11}\cltens &= \zeta_{01}\clo + \zeta_{10}\clt \\
    \zeta_{01}\cxltens &= \zeta_{11}\cxlo + \zeta_{00}\clt \\
    \zeta_{10}\cxltens &= \zeta_{00}\clo + \zeta_{11}\cxlt \\
    \cltens\cxltens &= \clo\cxlo + \clt\cxlt.
\end{align*}
Similarly, $H_\GG^{RO(\Pi BU(2))}(B U(2)_+)/N$ is generated by
\begin{align*}
    \zeta_0 &= \zeta_{00} \\
    \zeta_1 &= \zeta_{01}\zeta_{10} \\
    \zeta_2 &= \zeta_{11} \\
    \cl &= \cltens \\
    \cxl &= \cxltens \\
    \cw &= \clo\clt \\
    \mathllap{\text{and }}\cxw &= \cxlo\cxlt
\end{align*}
subject to the relations
\begin{align*}
    \zeta_0\zeta_1\zeta_2 &= 0 \\
    \zeta_1\cxl &= \zeta_0\zeta_2\cl \\
    \zeta_2^2\cxw &= \zeta_0^2\cw.
\end{align*}

Now suppose $H_\GG^{RO(\Pi BU(2))}(B T^2_+)/N$ were free over
$H_\GG^{RO(\Pi BU(2))}(B U(2)_+)/N$. If a basis consisted of polynomials none
of which contained 1 as a term, the relations above show that no linear combination
could equal 1. Hence any basis must include an element having 1 as a term.
But, 1 is the only element in $H_\GG^{RO(\Pi BU(2))}(B T^2_+)/N$ in grading 0,
so an element having 1 as a term must in fact simply equal 1.
In other words, any basis would have to have 1 as one of its elements.

Now let's look at what happens in the $RO(\GG)$ grading. 
Here is what the cohomology of $B T^2$ looks like, with the numbers indicating
the rank of each $\Z/2$-module. (The grid lines are spaced every 2; see \S\ref{sec:bases}.)
\begin{center}
\begin{tikzpicture}[scale=0.4] 
    \draw[step=1cm,gray,very thin] (-0.9,-0.9) grid (4.9,6.9);
    \draw[thick] (-1,0) -- (5,0);
    \draw[thick] (0,-1) -- (0,7);
    \node[right] at (5,0) {$a$};
    \node[above] at (0,7) {$b\sigma$};

    \node[fill=white,scale=0.7] at (0,0) {1}; \draw (0,0) circle(0.38cm);
    \node[fill=white,scale=0.7] at (0,1) {2}; \draw (0,1) circle(0.38cm);
    \node[fill=white,scale=0.7] at (0,2) {1}; \draw (0,2) circle(0.38cm);

    \node[fill=white,scale=0.7] at (1,1) {2}; \draw (1,1) circle(0.38cm);
    \node[fill=white,scale=0.7] at (1,2) {4}; \draw (1,2) circle(0.38cm);
    \node[fill=white,scale=0.7] at (1,3) {2}; \draw (1,3) circle(0.38cm);

    \node[fill=white,scale=0.7] at (2,2) {3}; \draw (2,2) circle(0.38cm);
    \node[fill=white,scale=0.7] at (2,3) {6}; \draw (2,3) circle(0.38cm);
    \node[fill=white,scale=0.7] at (2,4) {3}; \draw (2,4) circle(0.38cm);

    \node[fill=white,scale=0.7] at (3,3) {4}; \draw (3,3) circle(0.38cm);
    \node[fill=white,scale=0.7] at (3,4) {8}; \draw (3,4) circle(0.38cm);
    \node[fill=white,scale=0.7] at (3,5) {4}; \draw (3,5) circle(0.38cm);

    \node[fill=white,scale=0.8,rotate=45] at (4,4) {\dots};
    \node[fill=white,scale=0.8,rotate=45] at (4,5) {\dots};
    \node[fill=white,scale=0.8,rotate=45] at (4,6) {\dots};

\end{tikzpicture}
\end{center}
Since 1 is a basis element,
its multiples must form a summand of the $RO(\GG)$-graded part as
module over $H_\GG^{RO(\Pi BU(2))}(B U(2)_+)/N$.
Here is how those multiples are distributed (this is how the basis elements of $H_\GG^{RO(\GG)}(BU(2)_+)$ are distributed):
\begin{center}
\begin{tikzpicture}[scale=0.4] 
    \draw[step=1cm,gray,very thin] (-0.9,-0.9) grid (4.9,6.9);
    \draw[thick] (-1,0) -- (5,0);
    \draw[thick] (0,-1) -- (0,7);
    \node[right] at (5,0) {$a$};
    \node[above] at (0,7) {$b\sigma$};

    \node[fill=white,scale=0.7] at (0,0) {1}; \draw (0,0) circle(0.38cm);
    \node[fill=white,scale=0.7] at (0,1) {1}; \draw (0,1) circle(0.38cm);
    \node[fill=white,scale=0.7] at (0,2) {1}; \draw (0,2) circle(0.38cm);

    \node[fill=white,scale=0.7] at (1,1) {1}; \draw (1,1) circle(0.38cm);
    \node[fill=white,scale=0.7] at (1,2) {2}; \draw (1,2) circle(0.38cm);
    \node[fill=white,scale=0.7] at (1,3) {1}; \draw (1,3) circle(0.38cm);

    \node[fill=white,scale=0.7] at (2,2) {2}; \draw (2,2) circle(0.38cm);
    \node[fill=white,scale=0.7] at (2,3) {3}; \draw (2,3) circle(0.38cm);
    \node[fill=white,scale=0.7] at (2,4) {2}; \draw (2,4) circle(0.38cm);

    \node[fill=white,scale=0.7] at (3,3) {2}; \draw (3,3) circle(0.38cm);
    \node[fill=white,scale=0.7] at (3,4) {4}; \draw (3,4) circle(0.38cm);
    \node[fill=white,scale=0.7] at (3,5) {2}; \draw (3,5) circle(0.38cm);

    \node[fill=white,scale=0.8,rotate=45] at (4,4) {\dots};
    \node[fill=white,scale=0.8,rotate=45] at (4,5) {\dots};
    \node[fill=white,scale=0.8,rotate=45] at (4,6) {\dots};

\end{tikzpicture}
\end{center}
From these diagrams we can see that the other summand must contain an element
in grading $2\sigma$ other than the multiple of 1 in that grading, which is
\[
    \zeta_0\zeta_2\cl\cdot 1 = \zeta_{00}\zeta_{11}\cltens = \zeta_{00}\zeta_{01}\clo + \zeta_{00}\zeta_{10}\clt.
\]
The two possibilities for this other element are $\zeta_{00}\zeta_{01}\clo$ or $\zeta_{00}\zeta_{10}\clt$.
Since they differ by an element of $H_\GG^{RO(\Pi BU(2))}(B U(2)_+)$,
we need only consider one of them, say $\zeta_{00}\zeta_{01}\clo = \zeta_0\zeta_{01}\clo$.
As shown, this is a multiple of $\zeta_{01}\clo$, which must therefore be another basis element.
Because $\grad\zeta_{01}\clo = \Omega_1 + \Omega_2 + 2$,
its multiples in the $RO(\GG)$ grading will give a copy of $H_\GG^{\Omega_0 + RO(\GG)}(B U(2)_+)$,
and they will be arranged as follows.
\begin{center}
\begin{tikzpicture}[scale=0.4] 
    \draw[step=1cm,gray,very thin] (-0.9,-0.9) grid (4.9,6.9);
    \draw[thick] (-1,0) -- (5,0);
    \draw[thick] (0,-1) -- (0,7);
    \node[right] at (5,0) {$a$};
    \node[above] at (0,7) {$b\sigma$};

    \node[fill=white,scale=0.7] at (0,1) {1}; \draw (0,1) circle(0.38cm);
    \node[fill=white,scale=0.7] at (0,2) {1}; \draw (0,2) circle(0.38cm);

    \node[fill=white,scale=0.7] at (1,2) {2}; \draw (1,2) circle(0.38cm);
    \node[fill=white,scale=0.7] at (1,3) {2}; \draw (1,3) circle(0.38cm);

    \node[fill=white,scale=0.7] at (2,3) {3}; \draw (2,3) circle(0.38cm);
    \node[fill=white,scale=0.7] at (2,4) {3}; \draw (2,4) circle(0.38cm);

    \node[fill=white,scale=0.7] at (3,4) {4}; \draw (3,4) circle(0.38cm);
    \node[fill=white,scale=0.7] at (3,5) {4}; \draw (3,5) circle(0.38cm);

    \node[fill=white,scale=0.8,rotate=45] at (4,5) {\dots};
    \node[fill=white,scale=0.8,rotate=45] at (4,6) {\dots};

\end{tikzpicture}
\end{center}
But this will clearly give too may elements in grading $4\sigma$.
In fact, there we will see the linear relation
\[
    \zeta_0\zeta_1\cxl\cdot \zeta_{01}\clo = \zeta_0^2\zeta_1\cw \cdot 1
\]
that follows from Proposition~\ref{prop:additive over bu2} or can be checked directly
from the relations above.
Therefore, there can be no basis, and
$H_\GG^{RO(\Pi BU(2))}(B T^2_+)/N$ is not a free module over
$H_\GG^{RO(\Pi BU(2))}(B U(2)_+)/N$.
\end{proof}

As per the discussion before this proposition, we get the following immediate corollary.

\begin{corollary}\label{cor:notfree}
$H_\GG^{\ROev(\Pi BU(2))}(B T^2_+)$ is not free over
$H_\GG^{\ROev(\Pi BU(2))}(B U(2)_+)$.
\qed
\end{corollary}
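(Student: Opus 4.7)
The plan is to derive this corollary from the preceding proposition by the contrapositive argument already sketched in the discussion before Proposition~\ref{prop:ideal structure}. Suppose, for contradiction, that $H_\GG^{\ROev(\Pi BU(2))}(B T^2_+)$ were free over $H_\GG^{\ROev(\Pi BU(2))}(B U(2)_+)$ with some basis $B$. The functor $-\tensorS \HS/N$ carries a free module over a graded $\HS$-algebra to a free module over the corresponding quotient, with the image of $B$ as basis. Applying this functor would therefore yield that $H_\GG^{\ROev(\Pi BU(2))}(B T^2_+)/N$ is free over $H_\GG^{\ROev(\Pi BU(2))}(B U(2)_+)/N$.

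The essential point is that $N$ contains every element of $\HS$ in nonzero $RO(\GG)$-grading, so in particular every odd-graded element. Consequently, applying $-\tensorS\HS/N$ to the even-graded cohomology produces the same data (as a $Q$-module) as the even-graded part of the $RO(\Pi BU(2))$-graded quotient module studied in the preceding proposition, because the multiplicative action of $Q$ preserves the grading and cannot connect even to odd pieces. The obstruction produced in the proof of that proposition---forcing $1$ as a basis element in grading $0$, then a second basis element in grading $2\sigma$ (a non-trivial multiple of $\zeta_{01}\clo$ up to terms from $BU(2)$), and finally identifying the forbidden linear relation $s^*(\zeta_0\zeta_1\cxl)\cdot\zeta_{01}\clo = s^*(\zeta_0^2\zeta_1\cw)\cdot 1$ in grading $4\sigma$---takes place entirely within the $RO(\GG)$-gradings $0$, $2\sigma$, and $4\sigma$, all of which lie in $\ROev(\Pi BU(2))$.

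The same obstruction therefore applies in the restricted even-graded setting, contradicting the assumed freeness and giving the corollary. The only real verification needed is that each step of the preceding proposition's argument respects the restriction to $\ROev(\Pi BU(2))$; this is a bookkeeping check of the gradings involved and is not expected to present any real difficulty, since the non-trivial relations produced there manifestly lie in even gradings.
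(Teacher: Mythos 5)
Your proposal is correct and follows the same route as the paper: reduce modulo the ideal $N$ (so that freeness would be preserved), observe that $N$ contains all odd-graded elements of $\HS$ so the restriction to $\ROev(\Pi BU(2))$ costs nothing, and then invoke the obstruction from the preceding proposition, which indeed lives entirely in the even gradings $0$, $2\sigma$, and $4\sigma$. This is exactly the "immediate corollary" argument the paper intends via the discussion preceding Proposition~\ref{prop:ideal structure}.
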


This is an example showing that, in general, we cannot expect the equivariant cohomology of a projective
bundle to be free over the cohomology of its base---even when restricted to even gradings---contrary 
to what happens nonequivariantly.

\section{The pushforward map to the cohomology of $B U(2)$}\label{sec:pushforward}

The bundle map $s\colon B T^2\to B U(2)$ induces not only the map $s^*$ but a transfer
or pushforward map
\[
    s_!\colon H_\GG^{RO(\Pi BU(2))}(B T^2_+) \to H_\GG^{RO(\Pi BU(2))}(B U(2)_+)
\]
that reduces grading by $\lambda$. (The fibers of $s$ are projective lines whose equivariant dimensions
are given by $\lambda$.)
With the idea that this pushforward will be useful in studying the cohomology of finite Grassmannians,
we compute it here.
It will be more convenient to work with the dual classes introduced in \S\ref{sec:units}.

Because $s_!$ is a map of $H_\GG^{RO(\Pi BU(2))}(B U(2)_+)$-modules, it suffices to find
its values on the generators 
$1$, $\zeta_{01}\clod$, $\zeta_{10}\cxlod$, and $\clod\cxlod$ implied by
Proposition~\ref{prop:additive over bu2}.
(Recall that the dual generators differ from the usual ones by units.)
We will use the fact that the map $\eta$ is injective in even gradings to reduce the calculations
to nonequivariant ones. There, we will use the following result.

\begin{proposition}\label{prop:nonequivariant pushforward}
The nonequivariant pushforward map
\[
    s_!\colon H^\Z(BT^2_+) \to H^\Z(BU(2)_+),
\]
which reduces grading by $2$, has
\[
    s_!(1) = 0 \text{ and } s_!(\xd_1) = 1.
\]
\qed
\end{proposition}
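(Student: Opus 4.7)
The plan is to identify $s$ with the projective bundle projection $\PP(\omega)\to BU(2)$ and then compute $s_!$ as integration along the fiber. Nonequivariantly, $s$ is a smooth fiber bundle whose fiber is $\CP^1$, a closed oriented $2$-manifold; this matches the fact that $s_!$ lowers cohomological degree by $2$. Under the identification $BT^2 \simeq \PP(\omega)$, the line bundle $\omega_1$ pulls back to the tautological subbundle on $\PP(\omega)$, so $\omega_1\dual$ restricts on each fiber $\PP(\C^2)\iso\CP^1$ to the hyperplane bundle $O(1)$. Consequently $\xd_1 = c_1(\omega_1\dual)$ restricts to the standard generator of $H^2(\CP^1;\Z)$.

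First I would invoke base change to reduce the computation of $s_!$ on any class to the integral of its restriction along a single fiber $F\iso \CP^1$. This reduces us to
\[
    s_!(1)\big|_b = \int_F 1 \quad\text{and}\quad s_!(\xd_1)\big|_b = \int_F \xd_1\big|_F
\]
for each $b\in BU(2)$. The first integral is zero because $1\in H^0(F)$ has no contribution in top degree on a $2$-manifold, and the second is $1$ because $\xd_1|_F = c_1(O(1))$ is the positive generator of $H^2(\CP^1;\Z)$. Since $BU(2)$ is simply connected and the outputs land in $H^0$ and $H^2$ respectively, these fiberwise computations determine the global values.

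Alternatively, I could appeal to the nonequivariant projective bundle formula, which presents $H^*(BT^2;\Z)$ as the free $H^*(BU(2);\Z)$-module on $\{1,\xd_1\}$ subject to the relation $\xd_1^2 = c_1\xd_1 - c_2$. The pushforward $s_!$ is $H^*(BU(2);\Z)$-linear, and its values on this basis are pinned down by the fact that $s_!\circ s^* = 0$ on positive-degree classes together with the projection formula $s_!(s^*(\alpha)\cdot \xd_1) = \alpha\cdot s_!(\xd_1)$; combined with $\int_{\CP^1}c_1(O(1)) = 1$, this forces $s_!(1)=0$ and $s_!(\xd_1)=1$.

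No step here is a serious obstacle; the only delicate point is matching conventions so that $\xd_1$ genuinely corresponds to $c_1(O(1))$ rather than its negative, but this is exactly the reason the authors chose the dual generators in the first place, and it is consistent with the identifications used throughout \S\ref{sec:units} and \S\ref{sec:projective}.
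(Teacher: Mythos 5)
Your argument is correct. The paper in fact offers no proof of this proposition at all---it is stated with an immediate \qed as a standard nonequivariant fact---so there is nothing to compare against; your fiber-integration argument (equivalently, the projection-formula argument from the projective bundle presentation) is the standard justification and both of your routes work. Two small remarks: $s_!(1)=0$ follows even more immediately for degree reasons, since $s_!$ lowers degree by $2$ and so lands in $H^{-2}(BU(2)_+;\Z)=0$; and the sign issue you flag is genuine but resolved exactly as you say, since $\xd_1 = c_1(\omega_1\dual)$ restricts on a fiber $\PP(\C^2)$ to $c_1(O(1))$, the positive generator, giving $s_!(\xd_1)=1$ rather than $-1$.
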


Of course, then we have
\[
    s_!s^*(a) = 0 \text{ and } s_!(s^*(a)\xd_1) = a
\]
for $a\in H^\Z(BU(2)_+)$.

Returning to the equivariant case, we let
\[
    T^i = s^{-1}(B^i) \quad i = 0, 1, 2
\]
so that
\[
    T^0 = T^{00} = BT^2 \text{ and } T^2 = T^{11} = BT^2,
\]
with trivial $\GG$-action,
while $T^1$ has nontrivial action.
Write $s^i$ for the restriction of $s$ to $T^i$. 
(Note that this $s^1$ is not the map we called $s^1$ in the proof of Proposition~\ref{prop:symmetric}.)
Then the naturality of the transfer
tells us that we have the following commutative diagram
(where we write $\star$ for $RO(\Pi BU(2))$ for brevity).
\begin{equation}\label{diag:T2 and U2}
\xymatrix{
    H_\GG^{\star}(B T^2_+) \ar[d]_{s_!} \ar[r]^-{\bar\eta}
        & H_\GG^{\star}(T^0_+)\dirsum H_\GG^\star(T^1_+)\dirsum H_\GG^\star(T^2_+) 
                \ar[d]^{s^0_!\dirsum s^1_! \dirsum s^2_!} \\
    H_\GG^{\star}(B U(2)_+) \ar@{>->}[r]_-\eta
        & H_\GG^{\star}(B^0_+)\dirsum H_\GG^\star(B^1_+)\dirsum H_\GG^\star(B^2_+)
}
\end{equation}

The restrictions
\[
    s^0\colon T^0\to B^0 \text{ and } s^2\colon T^2\to B^2
\]
are copies of the nonequivariant map $s\colon BT^2\to BU(2)$, so their pushforwards are given
essentially by Proposition~\ref{prop:nonequivariant pushforward}.

For the middle component, recall that $B^1 = \Xp\infty \times \Xq\infty = BT^2$ and write
\[
    \omega|B^1 = \mu_1\dirsum \mu_2
\]
where $\mu_1$ and $\mu_2$ are the tautological bundles over $B^1$, with $\mu_1$ having trivial
$\GG$-action but $\mu_2$ having the nontrivial $\GG$-action on each fiber.
We then have
\[
    T^1 = \PP(\mu_1\dirsum\mu_2).
\]
Although Corollary~\ref{cor:notfree} shows us that we don't have a projective bundle formula
in general, we will show that $T^1\to B^1$ is a case where we do have something like it.

The bundle $\omega_1$ over $B T^2$ restricts to the tautological line bundle over $T^1 = \PP(\mu_1\dirsum\mu_2)$,
which we call $\omega_1$ again, and $\omega_2$ is its complementary bundle.
Write $\zeta_{ij}$, $\clod$, and so on for the restrictions to the cohomology of $T^1$
of the elements with those names in the cohomology of $B T^2$.
Note that $\zeta_{00}$ and $\zeta_{11}$ are invertible because $B^1$ is disjoint from $B^0$
and $B^2$.

Because $\zeta_{00}$ and $\zeta_{11}$ are invertible, to compute
$H_\GG^{RO(\Pi BU(2))}(T^1_+)$ it suffices to compute it in $RO(\GG)$ grading. There, we have
the following.

\begin{proposition}\label{prop:t1 structure}
$H_\GG^{RO(\GG)}(T^1_+)$ is a free module over $H_\GG^{RO(\GG)}(B^1_+)$
on the two elements $1$ and $\zeta_{00}\zeta_{01}\clod$.
\end{proposition}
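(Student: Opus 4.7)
The plan is to combine Proposition~\ref{prop:additive over bu2}, pulled back along $s^1\colon T^1 \to B^1$, with the invertibility of $\zeta_{00}$ and $\zeta_{11}$ on $T^1$. First I would justify that invertibility: the fixed set $(T^1)^\GG$ equals $B^{01}\disjunion B^{10}$, the two tautological sections of the projective bundle, and Proposition~\ref{prop:restrictions} shows that the restrictions of $\zeta_{00}$ and $\zeta_{11}$ to each summand are polynomial generators, hence units on $(T^1)^\GG$. Lifting these to units of $H_\GG^{RO(\GG)}(T^1_+)$ uses the injectivity of $\eta$ in even gradings, together with an explicit exhibition of inverses: over $T^1$ the formula $\zeta_{00}\zeta_{01}\zeta_{10}\zeta_{11} = \xi$ can be solved because the base-change of $\xi$ along $B^1 \to BU(2)$ acts invertibly on the $B^1$-component.

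Next, I would pull back the four $H_\GG^{RO(\Pi BU(2))}(B U(2)_+)$-generators $\{1, \zeta_{01}\clod, \zeta_{10}\cxlod, \clod\cxlod\}$ from Proposition~\ref{prop:additive over bu2}, restrict to the $RO(\GG)$-graded slice, and use the invertibility of $\zeta_{00}$ and $\zeta_{11}$ to normalize everything into $RO(\GG)$-grading. Natural $RO(\GG)$-graded representatives are $1$, $\zeta_{00}\zeta_{01}\clod$, $\zeta_{10}\zeta_{11}\cxlod$, and $\clod\cxlod$. The dual-class analogues of the relations from Theorem~\ref{thm:main} will then collapse this list: the dual of $\zeta_{10}\zeta_{11}\cxlo = (1-\kappa)\zeta_{00}\zeta_{01}\clo + e^2$ expresses $\zeta_{10}\zeta_{11}\cxlod$ as an $\HS$-linear combination of $\zeta_{00}\zeta_{01}\clod$ and $1$, while the dual of $\cltens\cxltens = \clo\cxlo + \clt\cxlt + \tau(\iota^{-2})\zeta_{00}^2\zeta_{01}\zeta_{10}\clo\clt$, combined with the invertibility of $\zeta_{00}$ and $\zeta_{11}$ on $T^1$, rewrites $\clod\cxlod$ as an $H_\GG^{RO(\GG)}(B^1_+)$-linear combination of the same two elements.

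Finally, for freeness, I would verify injectivity of the map $H_\GG^{RO(\GG)}(B^1_+)\dirsum H_\GG^{RO(\GG)}(B^1_+) \to H_\GG^{RO(\GG)}(T^1_+)$ sending $(a,b)$ to $a + b\cdot\zeta_{00}\zeta_{01}\clod$ by postcomposing with $\eta$ and computing the $2\times 2$ matrix of fixed-point restrictions to $B^{01}$ and $B^{10}$. Using Proposition~\ref{prop:restrictions}, the restrictions of $\zeta_{00}\zeta_{01}\clod$ to the two fixed-set components differ in a controlled way (one involving $x_1$ and the other involving $e^2$ times an appropriate $\zeta$-monomial), so the determinant of the matrix is a unit times a polynomial generator, yielding $H_\GG^{RO(\GG)}(B^1_+)$-linear independence in even gradings; the odd-graded case follows by tensoring up with $\HS$. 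The main obstacle I anticipate is the reduction step for $\clod\cxlod$, which requires carefully interleaving the Theorem~\ref{thm:main} relations with the duality formulas of Proposition~\ref{prop:duals} and the inversions of $\zeta_{00}, \zeta_{11}$ to ensure the final coefficients actually land in $H_\GG^{RO(\GG)}(B^1_+)$ rather than a larger pullback ring.
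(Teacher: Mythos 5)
Your proposal takes a genuinely different route from the paper, but it has a gap at its foundation. The paper does not derive the structure of $H_\GG^{RO(\GG)}(T^1_+)$ from Proposition~\ref{prop:additive over bu2} at all: it uses the splitting $\omega|B^1 = \mu_1\dirsum\mu_2$ to produce the cofibration sequence $\PP(\mu_1)_+ \to T^1_+ \to \Susp^{\omega_1\dual\tensor\mu_1}\PP(\mu_2)_+$ over $B^1$, observes that $\PP(\mu_1)\to T^1$ is a section of $s^1$ so that the associated long exact sequence breaks into split short exact sequences, and reads off the free basis $\{1,\ \zeta_{00}\zeta_{01}\clo\}$ directly, using Proposition~\ref{prop:tensor w fixed} to identify $e(\omega_1\dual\tensor\mu_1)$. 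Freeness and generation come for free from the splitting; no fixed-point determinant is needed.

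The gap in your first step is this: Proposition~\ref{prop:additive over bu2} says that $H_\GG^{RO(\Pi BU(2))}(BT^2_+)$ is generated over $H_\GG^{RO(\Pi BU(2))}(BU(2)_+)$ by four elements, but restricting along the inclusions $T^1\includesin BT^2$ and $B^1\includesin BU(2)$ does not transport module generators. That would require the restriction $H_\GG^\gr(BT^2_+)\to H_\GG^\gr(T^1_+)$ to be surjective (or a base-change theorem identifying $H_\GG(T^1_+)$ with an extension of scalars of $H_\GG(BT^2_+)$), and it is not: for instance $\zeta_{00}^{-1}$ and $\zeta_{11}^{-1}$ live in $H_\GG^\gr(T^1_+)$ but are not restrictions of anything. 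Even after you adjoin these inverses, the claim that the localized image of $H_\GG^\gr(BT^2_+)$ is all of $H_\GG^\gr(T^1_+)$ is essentially the content of the proposition and needs its own argument --- which is exactly what the paper's Gysin-sequence computation supplies. A secondary problem is your final step: injectivity of $\eta$ is only available in even gradings (Example~\ref{ex:not injective} shows it genuinely fails in odd ones), and ``tensoring up with $\HS$'' does not propagate linear independence from even to odd gradings over $H_\GG^{RO(\GG)}(B^1_+)$ unless you already know the module is free, which is what you are trying to prove. The reduction of $\clod\cxlod$ to the span of $1$ and $\zeta_{00}\zeta_{01}\clod$ that you flag as the main obstacle is, by contrast, fine --- the paper carries out precisely that identity in the proof of Theorem~\ref{thm:pushforward} --- but it is downstream of the unproved generation claim.
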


\begin{proof}
Write $y_1 = e(\mu_1)$ in $H_\GG^{RO(\Pi BU(2))}(B^1_+)$.
We have the following cofibration sequence over $B^1$:
\[
    \PP(\mu_1)_+ \to T^1_+ \to \susp^{\omega_1\dual\tensor\mu_1}\PP(\mu_2)_+
\]
where $\PP(\mu_1)\homeo \PP(\mu_2) \homeo B^1$.
Here, we are using that $\PP(\mu_2)$ is the zero locus of the section of 
$\omega_1\dual\tensor\mu_1 \iso \Hom(\omega_1,\mu_1)$
given by the composite
\[
    \omega_1 \to \mu_1\dirsum\mu_2 \to \mu_1,
\]
where the first map is the inclusion of the subbundle and the second is projection.
Since $\mu_1$ has trivial action on its fibers, $\grad \omega_1\dual\tensor\mu_1 = \grad\omega_1$.

Since $\PP(\mu_1)\to T^1_+$ is a section of $s^1\colon T^1\to B^1$, we get a splitting
in the $RO(\GG)$ grading, hence a split short exact sequence
\begin{equation}\label{eqn:T1 SES}
    0 \to H_\GG^{\alpha-\omega_1}(\PP(\mu_2)_+) 
        \xrightarrow{\cdot e(\omega_1\dual\tensor\mu_1)}
    H_\GG^{\alpha}(T^1_+) \to H_\GG^{\alpha}(\PP(\mu_1)_+) \to 0
\end{equation}
for $\alpha\in RO(\GG)$.
Here, we are grading the cohomology of $\PP(\mu_2)$ on $RO(\Pi BT^2)$ via the inclusion
\[
    i\colon \PP(\mu_2) = T^{10} \to T^1.
\]
With this grading, $H_\GG^{-\omega_1+RO(\GG)}(\PP(\mu_2)_+)$ is a free module over
$H_\GG^{RO(\GG)}(B^1_+)$ on the invertible element $\zeta_{00}\zeta_{01}$.

By Proposition~\ref{prop:tensor w fixed},
\[
    e(\omega_1\dual\tensor\mu_1) = \clo + \zeta_{10}\zeta_{11}(s^1)^*(y_1).
\]
It follows from (\ref{eqn:T1 SES}) that $H_\GG^{RO(\GG)}(T^1_+)$ is a free
module over $H_\GG^{RO(\GG)}(B^1_+)$ on a basis consisting of 1 and
$\zeta_{00}\zeta_{01}\clo + \xi(s^1)^*(y_1)\cdot 1$.
But we can simplify that to the basis $\{1, \zeta_{00}\zeta_{01}\clo\}$
as claimed.
\end{proof}

\begin{corollary}\label{cor:t1 pushforward}
The pushforward map
\[
    s^1_!\colon H_\GG^{RO(\Pi BU(2))}(T^1_+)\to H_\GG^{RO(\Pi BU(2))}(B^1_+)
\]
is determined by the values
\[
    s^1_!(1) = e^{-2}\kappa\zeta_0\zeta_2 \qquad\text{and}\qquad s^1_!(\zeta_{00}\zeta_{01}\clod) = \zeta_0\zeta_2.
\]
\end{corollary}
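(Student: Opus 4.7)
The plan rests on Proposition~\ref{prop:t1 structure}: the set $\{1,\ \zeta_{00}\zeta_{01}\clod\}$ is a basis for $H_\GG^{RO(\GG)}(T^1_+)$ over $H_\GG^{RO(\GG)}(B^1_+)$, and since $\zeta_{00}$ and $\zeta_{11}$ are invertible on $T^1$ (the components $T^{00}$ and $T^{11}$ lie outside $T^1$), this basis extends to one for the full $RO(\Pi BU(2))$-graded cohomology. Because $s^1_!$ is $H_\GG^{RO(\Pi BU(2))}(B^1_+)$-linear by the projection formula, it is completely determined by its values on these two elements, so the corollary reduces to computing $s^1_!(1)$ and $s^1_!(\zeta_{00}\zeta_{01}\clod)$.

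The main tool will be the pair of complementary sections $i_1\colon\PP(\mu_1)\hookrightarrow T^1$ and $i_2\colon\PP(\mu_2)\hookrightarrow T^1$ of $s^1$. The normal bundle of $i_2$ is $\omega_1^\vee\otimes\mu_1$, and since $\mu_1$ has trivial $\GG$-fibers, Proposition~\ref{prop:tensor w fixed} gives its Euler class as $\clo + \zeta_{10}\zeta_{11}(s^1)^*(y_1)$, as already noted in the proof of Proposition~\ref{prop:t1 structure}. Because $s^1\circ i_2 = \id_{B^1}$, the projection formula yields
\[
    s^1_!\bigl(\clo + \zeta_{10}\zeta_{11}(s^1)^*(y_1)\bigr) = 1.
\]
An analogous calculation using $i_1$, whose normal bundle $\omega_1^\vee\otimes\mu_2$ carries a sign twist that can be absorbed by rewriting $\mu_2 = \chi\widetilde{\mu}_2$ with $\widetilde{\mu}_2$ trivial, produces a second equation involving $\cxlo$ and $\zeta_{00}\zeta_{01}$. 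Translating both equations into the basis $\{1,\zeta_{00}\zeta_{01}\clod\}$ via Proposition~\ref{prop:duals} and the relations of Theorem~\ref{thm:main} gives a two-by-two linear system over $H_\GG^{RO(\Pi BU(2))}(B^1_+)$ whose unique solution should be the stated values.

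As a consistency check, both claimed pushforwards lie in even gradings, so by injectivity of $\eta$ on $\ROev$ (Corollary~3.5 of \cite{CH:bu2}) it suffices to verify the formulas after restriction to fixed points, where the nonequivariant result Proposition~\ref{prop:nonequivariant pushforward} applied to each component of $(T^1)^\GG = \PP(\mu_1)\sqcup\PP(\mu_2)$ must match the fixed-point image of the claimed right-hand side; nonequivariant reduction via $\rho$ should match as well (in particular $\rho(e^{-2}\kappa)=0$ is consistent with $s_!(1)=0$ nonequivariantly). The main obstacle will be bookkeeping: tracking the dualization factors from Proposition~\ref{prop:duals}, handling the $\chi$-twist in the $i_1$ computation, and carefully expressing $\clo$, $\cxlo$, and products such as $\zeta_{10}\zeta_{11}\cxlo$ in terms of the basis using Theorem~\ref{thm:main}. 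The appearance of $e^{-2}\kappa$ in $s^1_!(1)$ — rather than the $0$ one sees nonequivariantly — is the essential equivariant correction and should emerge naturally as the discrepancy between the two Thom-class equations, matching the observation in \S\ref{sec:notes} that inverting $\xi$ kills precisely such terms.
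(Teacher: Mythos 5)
Your first paragraph matches the paper: Proposition~\ref{prop:t1 structure} plus $H_\GG^{RO(\Pi BU(2))}(B^1_+)$-linearity reduces everything to the two values. The gap is in how you propose to compute them. The Euler classes $e(\omega_1\dual\tensor\mu_1)$ and $e(\omega_1\dual\tensor\mu_2)$ live in the gradings $2+\Omega_{10}+\Omega_{11}$ and $2+\Omega_{00}+\Omega_{01}$ of $RO(\Pi BT^2)$, and neither of these is congruent, modulo the invertibles $\zeta_{00}$, $\zeta_{11}$ available on $T^1$, to a grading pulled back from $RO(\Pi BU(2))$ (one would need $\Omega_{01}$ and $\Omega_{10}$ to appear with equal coefficients). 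So Proposition~\ref{prop:t1 structure} does not express these classes as $H_\GG^{RO(\Pi BU(2))}(B^1_+)$-combinations of $1$ and $\zeta_{00}\zeta_{01}\clod$, and the identities $s^1_!\bigl(e(\omega_1\dual\tensor\mu_i)\bigr)=1$ therefore do not become linear equations in the two unknowns. Multiplying by $\zeta_{00}\zeta_{01}$ to land in grading $2\sigma$ does not fix this, because $\zeta_{01}$ is not pulled back from $B^1$ and so cannot be moved across $s^1_!$ by the projection formula. Even granting a two-by-two system, its unique solvability is only asserted; the determinant would have to be a non-zero-divisor, which is far from clear given that the answer $e^{-2}\kappa\zeta_0\zeta_2$ is $e^2$-torsion.

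Your ``consistency check'' is in fact much closer to the paper's actual proof, but as stated it also misfires. The paper does not invoke $\eta$-injectivity on $B^1$ (which has trivial action, so that map is not doing any work); it observes that $s^1_!(1)$ and $s^1_!(\zeta_{00}\zeta_{01}\clo)$ land in the very small groups $H_\GG^{-2\sigma}(B^1_+)=\Z\{e^{-2}\kappa\}$ and $H_\GG^{0}(B^1_+)=A(\GG)$, whose elements are detected jointly by $\rho$ and $(-)^\GG$. Moreover $(s^1)^\GG$ is $\id\disjunion\gamma\colon BT^2\disjunion BT^2\to BT^2$, a disjoint union of homeomorphisms, so Proposition~\ref{prop:nonequivariant pushforward} (which concerns the $\CP^1$-bundle $s\colon BT^2\to BU(2)$) does not apply componentwise there; the correct fixed-point computation is $(s^1_!(1))^\GG = 1+1 = 2 = (e^{-2}\kappa)^\GG$. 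If you promote this detection argument from a check to the proof and carry it out correctly, you recover the paper's argument; the section-based route, as written, does not go through.
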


\begin{proof}
That $s^1_!$ is determined by these two values follows from the preceding proposition, so
it remains to compute $s^1_!(1)$ and $s^1_!(\zeta_{00}\zeta_{01}\clod)$.

We already noted that $s_!$ lowers grading by $\lambda = 2 + \Omega_1$, which is $2\sigma$ when restricted to $B^1$.
So we first compute
\[
    s^1_!\colon H_\GG^\alpha(T^1_+) \to H_\GG^{\alpha-2\sigma}(B^1_+)
\]
for $\alpha\in RO(\GG)$, then we will restore the full $RO(\Pi BU(2))$ grading.

Let $y_1$ and $y_2$ be the Euler classes of the two (nonequivariant) tautological bundles
over $B^1 = BT^2$.
Because $B^1$ has trivial $\GG$-action, we can see that
\[
    H_\GG^{RO(\GG)}(B^1_+) \iso \HS[y_1,y_2]
\]
where $\grad y_1 = 2 = \grad y_2$.

Because $s^1_!(1) \in H_\GG^{-2\sigma}(B^1_+)$, we must have $s^1_!(1) = ae^{-2}\kappa$ for some $a\in\Z$.
To determine $a$, we look at what happens when we take fixed points, where we will see the map
\[
    (s^1)^\GG\colon (T^1)^\GG = T^{01}\disjunion T^{10}\to B^1.
\]
This is $\id\disjunion\gamma\colon BT^2\disjunion BT^2 \to BT^2$ where $\gamma$ is the map that swaps
the two factors of $BT^2 = BT^1\times BT^1$. Pushforwards commute with taking fixed points, so we get
\[
    (s^1_!(1))^\GG = (s^1)^\GG_!(1^\GG) = (s^1)^\GG_!(1,1) = 1 + 1 = 2.
\]
On the other hand, $(e^{-2}\kappa)^\GG = 2$, so we must have $a = 1$. Thus,
$s^1_!(1) = e^{-2}\kappa$ when restricted to $RO(\GG)$ grading.

Similarly, $s^1_!(\zeta_{00}\zeta_{01}\clo)\in H_\GG^0(B^1_+)$,
so $s^1_!(\zeta_{00}\zeta_{01}\clo) = \alpha$ for some $\alpha\in A(\GG)$.
Nonequivariantly, this is the pushforward of the Euler class of the tautological
bundle of $\PP(\mu_1\dirsum\mu_2)$, which is 1, as we can see by looking at
what happens on any fiber. Thus, $\rho s^1_!(\zeta_{00}\zeta_{01}\clo) = 1$.
On the other hand, if we look at fixed points, $\omega_1^\GG$ restricts to 
the tautological line bundle $\omega_1$ on $T^{01} = BT^2$, but
restricts to the zero bundle on $T^{10}$.
The pushforward of its
Euler class on $T^{10}$ is 0 for dimensional reasons, while the pushforward
from $T^{01}$ is 1. Thus, $s^1_!(\zeta_{00}\zeta_{01}\clo)^\GG = 1$.
The only element of $A(\GG)$ that restricts to 1 and has 1 as its fixed points is
the unit 1, therefore $s^1_!(\zeta_{00}\zeta_{01}\clo)=1$.

Expanding the grading to $RO(\Pi BU(2))$, $s^1_!(1)$ should live in grading $-\lambda = -2\sigma+\Omega_0+\Omega_2$,
and we correct the grading by multiplying by the invertible element $\zeta_0\zeta_2$.
We make the same correction to $s^1_!(\zeta_{00}\zeta_{01}\clo)$, getting the values
in the statement of the corollary.
\end{proof}

\begin{theorem}\label{thm:pushforward}
The pushforward map 
\[
    s_!\colon H_\GG^{RO(\Pi BU(2))}(B T^2_+) \to H_\GG^{RO(\Pi BU(2))}(B U(2)_+)
\]
is determined by its values
\begin{align*}
    s_!(1) &= e^{-2}\kappa\zeta_0\zeta_2 \\
    s_!(\zeta_{01}\clod) &= \zeta_2 \\
    s_!(\zeta_{10}\cxlod) &= \zeta_0 \\
    s_!(\clod\cxlod) &= \cxl.
\end{align*}
\end{theorem}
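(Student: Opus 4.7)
The plan is to exploit the commutative diagram~(\ref{diag:T2 and U2}) together with the fact that, in even gradings, the fixed-set restriction $\eta$ is injective (\cite[Corollary~3.5]{CH:bu2}). Each of the four generators $1$, $\zeta_{01}\clod$, $\zeta_{10}\cxlod$, $\clod\cxlod$ lies in an even grading, and $s_!$ lowers grading by $\lambda$, which is also even, so the value of $s_!$ on each generator is determined by its image under $\eta$. That image is in turn assembled from the three component pushforwards $s^0_!$, $s^1_!$, $s^2_!$ along the restrictions of $s$ over $B^0$, $B^1$, and $B^2$. Since $s_!$ is a map of $H_\GG^{RO(\Pi BU(2))}(BU(2)_+)$-modules, Proposition~\ref{prop:additive over bu2} reduces everything to checking these four cases.

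First, I would compute the restriction $\bar\eta$ of each generator to $T^0 \disjunion T^1 \disjunion T^2$, using Proposition~\ref{prop:restrictions} together with Proposition~\ref{prop:duals} to convert to the dual classes. On $T^0$ and $T^2$ the $\GG$-action is trivial and $s^0$, $s^2$ are copies of the nonequivariant map $BT^2\to BU(2)$, so Proposition~\ref{prop:nonequivariant pushforward} reduces the computation to reading off the coefficient of $x_1$ in each restricted element (the only nonequivariant pushforwards needed are $s_!(1)=0$ and $s_!(x_1)=1$). One expects the $T^0$ and $T^2$ contributions to $s_!(1)$ to vanish, and to recover, after grading correction by invertible $\zeta$-monomials, a copy of $\cxl$ from $\clod\cxlod$.

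The main obstacle is the $T^1$ component. Proposition~\ref{prop:t1 structure} identifies $H_\GG^{RO(\GG)}(T^1_+)$ as a free rank-two module over $H_\GG^{RO(\GG)}(B^1_+)$ with basis $\{1, \zeta_{00}\zeta_{01}\clod\}$, and Corollary~\ref{cor:t1 pushforward} supplies $s^1_!$ on that basis. The step requiring the most care is writing each of the restrictions $(\zeta_{01}\clod)|_{T^1}$, $(\zeta_{10}\cxlod)|_{T^1}$, and $(\clod\cxlod)|_{T^1}$ in this basis, using the relations of Theorem~\ref{thm:main} together with the invertibility of $\zeta_{00}$ and $\zeta_{11}$ on $T^1$, and keeping track of the units $1-\kappa$, $1-\epsilon_1$, $1-\epsilon_2$ that accompany the dual Euler classes via Proposition~\ref{prop:duals}. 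For example, $(\zeta_{10}\cxlod)|_{T^1}$ should, up to an invertible $\zeta$-monomial and a unit, become a multiple of $\zeta_{00}\zeta_{01}\clod$, producing the $\zeta_0$ in $s_!(\zeta_{10}\cxlod)=\zeta_0$.

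Finally, summing the three componentwise outputs for each generator gives an element of $H_\GG^\star(B^0_+)\dirsum H_\GG^\star(B^1_+)\dirsum H_\GG^\star(B^2_+)$, which I would match against $\eta$ applied to the claimed right-hand side, using the fixed-set restriction formulas for $BU(2)$ from \cite{CH:bu2}. For $s_!(1)=e^{-2}\kappa\zeta_0\zeta_2$ only the $T^1$ contribution survives and matches Corollary~\ref{cor:t1 pushforward}; for $s_!(\clod\cxlod)=\cxl$ the $T^0$ and $T^2$ contributions dominate after the nonequivariant reduction. Injectivity of $\eta$ in even gradings then forces equality in $H_\GG^\star(BU(2)_+)$, completing the determination of $s_!$ on the four generators.
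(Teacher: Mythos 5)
Your proposal is correct and follows essentially the same route as the paper's proof: reduce to the four module generators via Proposition~\ref{prop:additive over bu2}, use injectivity of $\eta$ in even gradings together with diagram~(\ref{diag:T2 and U2}), handle $T^0$ and $T^2$ by the nonequivariant pushforward, and handle $T^1$ by rewriting the restricted classes in the basis $\{1,\zeta_{00}\zeta_{01}\clod\}$ of Proposition~\ref{prop:t1 structure} and applying Corollary~\ref{cor:t1 pushforward}. You correctly identify the one genuinely delicate step, namely expressing $(\clod\cxlod)|_{T^1}$ in that basis, which is exactly where the paper expends its computational effort.
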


\begin{proof}
That these values determine $s_!$ follows from Proposition~\ref{prop:additive over bu2}.

To check that these values are correct, recall that $\eta$ is injective in even gradings 
on the cohomology of $B U(2)$ and that we have the commutative diagram (\ref{diag:T2 and U2}).
For reference, we have
\begin{align*}
    \eta(\zeta_0) &= (\xi\zeta_1^{-1}\zeta_2^{-1}, \zeta_0, \zeta_0 ) \\
    \eta(\zeta_1) &= (\zeta_1, \xi\zeta_0^{-1}\zeta_2^{-1}, \zeta_1) \\
    \eta(\zeta_2) &= (\zeta_2, \zeta_2, \xi\zeta_0^{-1}\zeta_1^{-1}) \\
    \eta(\cld) &= (\cd_1\zeta_1, (e^2 + \xi(\xd_1+\xd_2))\zeta_0^{-1}\zeta_2^{-1}, \cd_1\zeta_1) \\
    \eta(\cxld) &= ((e^2+\xi \xd_1)\zeta_1^{-1}, (\xd_1+\xd_2)\zeta_0\zeta_2, (e^2+\xi \cd_1)\zeta_1^{-1}) \\
    \eta(\cwd) &= (\cd_2\zeta_1\zeta_2^2, \xd_1(e^2+\xi \xd_2)\zeta_0^{-1}\zeta_2, (e^4 + e^2\xi \cd_1 + \xi^2 \cd_2)\zeta_0^{-2}\zeta_1^{-1}) \\
    \eta(\cxwd) &= ((e^4 + e^2\xi \cd_1 + \xi^2 \cd_2)\zeta_1^{-1}\zeta_2^{-2}, \xd_2(e^2 + \xi \xd_1)\zeta_0\zeta_2^{-1}, \cd_2\zeta_0^2\zeta_1 )
\end{align*}

Using what we've said about the pushforward maps $s^i_!$, we get the following.
\begin{align*}
    \eta(s_!(1)) &= (s^0_!\dirsum s^1_!\dirsum s^2_!)(\bar\eta(1)) \\
        &= (s^0_!\dirsum s^1_!\dirsum s^2_!)(1,1,1) \\
        &= (0,e^{-2}\kappa\zeta_0\zeta_2,0) \\
        &= \eta(e^{-2}\kappa\zeta_0\zeta_2).
\end{align*}
Therefore, $s_!(1) = e^{-2}\kappa\zeta_0\zeta_2$ as claimed.
Similarly, we have the following.
\begin{align*}
    \eta(s_!(\zeta_{01}\clod)) &= (s^0_!\dirsum s^1_!\dirsum s^2_!)(\bar\eta(\zeta_{01}\clod)) \\
        &= (s^0_!\dirsum s^1_!\dirsum s^2_!)(\xd_1\zeta_{01}\zeta_{10}\zeta_{11}, \zeta_{01}\clod, (e^2+\xi \xd_1)\zeta_{00}^{-1}) \\
        &= (\zeta_2, \zeta_2, \xi\zeta_0^{-1}\zeta_1^{-1}) \\
        &= \eta(\zeta_2)
\end{align*}
\begin{align*}
    \eta(s_!(\zeta_{10}\cxlod)) &= (s^0_!\dirsum s^1_!\dirsum s^2_!)(\bar\eta(\zeta_{10}\cxlod)) \\
        &= (s^0_!\dirsum s^1_!\dirsum s^2_!)((e^2+\xi \xd_1)\zeta_{11}^{-1}, \zeta_{10}\cxlod, \xd_1\zeta_{00}\zeta_{01}\zeta_{10}) \\
        &= (\xi\zeta_1^{-1}\zeta_2^{-1}, \zeta_0, \zeta_0) \\
        &= \eta(\zeta_0)
\end{align*}
\begin{align*}
    \eta(s_!(\clod\cxlod)) &= (s^0_!\dirsum s^1_!\dirsum s^2_!)(\bar\eta(\clod\cxlod)) \\
        &= (s^0_!\dirsum s^1_!\dirsum s^2_!)(\xd_1(e^2+\xi \xd_1), \clod\cxlod, \xd_1(e^2+\xi \xd_1)) \\
        &= ((e^2+\xi \cd_1)\zeta_1^{-1}, (\xd_1 + \xd_2)\zeta_0\zeta_2, (e^2+\xi \cd_1)\zeta_1^{-1}) \\
        &= \eta(\cxl)
\end{align*}
For the first and last components of the last calculation, we use that
\[
    \xd_1(e^2+\xi \xd_1) = \xd_1(e^2 + \xi \cd_1) - \xi \cd_2
\]
on $T^0$ and $T^2$, hence $s^0_!(\xd_1(e^2+\xi \xd_1)) = s^2_!(\xd_1(e^2+\xi \xd_1)) = (e^2+\xi \cd_1)\zeta_1^{-1}$.
For the middle component, we use that
\[
    \clod\cxlod = \zeta_{00}\zeta_{01}\clod {[(1-\kappa) \xd_1 + \xd_2]} + e^2 \xd_1 - \xi \xd_1\xd_2
\]
on $T^1$. This can be checked using the facts that
\[
    \cxltensd = s^*((\xd_1 + \xd_2)\zeta_0\zeta_2)
    \quad\text{and}\quad
    \clod\cltd = s^*(\xd_1(e^2+\xi\xd_2)\zeta_0^{-1}\zeta_2)
\]
and the relations in the cohomology of $BT^2$ to compute that
\begin{align*}
    \zeta_{00}\zeta_{01}\clod (\xd_1+\xd_2)
    &= \zeta_{01}\zeta_{11}^{-1}\clod\cxltensd \\
    &= \zeta_{11}^{-1}\clod\cdot \zeta_{01}\cxltensd \\
    &= \dots \\
    &= \clod\cxlod + \kappa \zeta_{00}\zeta_{01}\clod \xd_1 - e^2\xd_1 + \xi\xd_1\xd_2.
\end{align*}
From the formula above for $\clod\cxlod$ it follows that
\[
    s^1_!(\clod\cxlod) = ((1-\kappa)\xd_1 + \xd_2 + \kappa \xd_1)\zeta_0\zeta_2 = (\xd_1 + \xd_2)\zeta_0\zeta_2.
\]
This completes the verification of the theorem.
\end{proof}

\section{Waner classes and a sum formula}\label{sec:waner}

In \cite{Wan:ChernClasses}, Stefan Waner suggested constructing characteristic classes as follows,
generalizing a nonequivariant construction of the Chern classes:
Given an $n$-dimensional complex vector bundle $\alpha$ over a space $B$, we have its Euler class $e(\alpha) = \cw(\alpha)$
(where $\omega = \omega(n)$),
which we take as
its ``top'' Chern class. Now consider the Gysin sequence of $\alpha$, the long exact sequence induced by the
cofibration $S(\alpha)_+\to D(\alpha)_+\to T(\alpha)$ over $B$, where $T(\alpha)$ is the (parametrized) Thom space.
Part of the Gysin sequence is the following:
\[
 0 =  H_\GG^{-2}(B_+) \xrightarrow{\cdot e(\alpha)}
 	 H_\GG^{\alpha-2}(B_+) \to  H_\GG^{\alpha-2}(S(\alpha)_+)
	\to  H_\GG^{-1}(B_+) = 0.
\]
When we pull the bundle $\alpha$ back to $S(\alpha)$, the result splits as $\alpha'\dirsum\C$
for some $(n-1)$-dimensional complex bundle $\alpha'$, so we have
the class $e(\alpha') \in  H_\GG^{\alpha-2}(S(\alpha)_+)\iso  H_\GG^{\alpha-2}(B_+)$.
We write $c_{\omega-2}(\alpha)$ for this class in $ H_\GG^{\alpha-2}(B_+)$. Continuing inductively,
we get a series of classes $c_{\omega-2k}$, $0\leq k \leq n$,
which we call the \emph{Waner classes}.
These come from universal classes $c_{\omega-2k} \in H_\GG^{\omega-2k}(BU(n)_+)$.

Clearly, $c_{\omega-2k}$ restricts nonequivariantly to the Chern
class $c_{n-k}$, because the construction above produces the Chern classes when done non\-equivariantly.
This raises the question: What are the classes $c_{\omega-2}$ and $c_{\omega-4}\in  H_\GG^{RO(\Pi BU(2))}(B U(2)_+)$?
We compute them by first showing we have a general sum formula for the Waner classes.

Nonequivariantly, the Whitney sum formula allows us to, for example, calculate
the first Chern class of a sum of bundles in terms of the first Chern classes of the individual bundles.
Equivariantly, if $\omega_1$ and $\omega_2$ are the two tautological bundles over $BT^2$, we have that
\[
    c_\lambda(\omega_1\dirsum\omega_2) = \cltens
\]
in the cohomology of $B T^2$,
which is related to, but cannot be written in terms of, $\clo$ and $\clt$.
So we cannot expect a sum formula that would allow us to express $c_\lambda(\omega_1\dirsum\omega_2)$ 
in general in terms of the Euler classes of $\omega_1$ and $\omega_2$.
What we do have is the following.

\begin{definition}
If $\alpha$ is a complex $n$-plane bundle, the \emph{total Waner class} is the formal sum
\[
    W(\alpha) = \sum_{k=0}^n c_{\omega-2k}(\alpha)t^{n-k}.
\]
\end{definition}

\begin{proposition}[Whitney Sum]\label{prop:Whitney sum}
If $\alpha_1$ and $\alpha_2$ are complex vector bundles over $B$, then
\[
    W(\alpha_1\dirsum \alpha_2) = W(\alpha_1)W(\alpha_2).
\]
\end{proposition}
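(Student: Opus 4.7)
The plan is to emulate the classical Whitney-sum argument by combining naturality, the splitting principle, and a direct Gysin-sequence computation for the two-line-bundle base case. First, by naturality of the Waner classes under bundle pullback, it suffices to verify the identity in the universal example, where $\alpha_1$ and $\alpha_2$ are the pullbacks of the tautological bundles from the two factors of $BU(m) \times BU(n)$. Since the grading $\omega - 2k$ of $c_{\omega - 2k}(\alpha)$ has even integer and sign-integer parts (the underlying representation of a complex bundle is even-graded over every component), both sides of the claimed identity lie in even gradings.

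Second, I would invoke Proposition~\ref{prop:symmetric} together with its conjectured extension in Remark~\ref{rem:general splitting principle}: the pullback from $BU(n)$ to $BT^n$ is injective on even-graded cohomology. Pulling the identity $W(\alpha_1 \oplus \alpha_2) = W(\alpha_1) W(\alpha_2)$ back to $BT^{m+n}$ therefore reduces the problem to the case where both bundles split as external sums of line bundles. An easy induction on the total rank, using associativity of direct sum, reduces that in turn to the binary case for two line bundles.

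Third, the base case over the universal example $BT^2$ for $\omega_1$ and $\omega_2$ amounts to showing
\[
    W(\omega_1 \oplus \omega_2) = (e(\omega_1) t + 1)(e(\omega_2) t + 1) = \clo \clt\, t^2 + (\clo + \clt)\, t + 1.
\]
The top class $c_\omega(\omega_1 \oplus \omega_2) = \clo\clt$ is immediate from the definition $c_\omega(\alpha) = e(\alpha)$ together with $e(\omega_1 \oplus \omega_2) = \clo\clt$. The remaining equalities $c_{\omega - 2}(\omega_1 \oplus \omega_2) = \clo + \clt$ and $c_{\omega - 4}(\omega_1 \oplus \omega_2) = 1$ I would verify by unwinding the iterative Gysin-sequence definition of the Waner classes: identify the complementary bundle $\alpha'$ over the sphere bundle $S(\omega_1 \oplus \omega_2)$, compute its Euler class using the structural identities from Theorem~\ref{thm:main}, and transport back to the cohomology of $BT^2$ via the Gysin isomorphism.

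The main obstacle is this final Gysin computation, which requires carefully tracking how the decomposition $\omega_1 \oplus \omega_2$ interacts with the tautological sub- and quotient-bundles on the sphere bundle, and then matching the result to $\clo + \clt$ in the correct grading using the relation between $\cltens$ and $\clo, \clt$ from Theorem~\ref{thm:main}. A secondary obstacle is that the splitting principle for $BU(n)$ with $n \geq 3$ is only sketched in Remark~\ref{rem:general splitting principle}; making the argument fully rigorous for bundles of arbitrary rank therefore depends on either completing that extension or restricting attention to the cases where the splitting principle is already established.
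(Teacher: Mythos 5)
Your overall skeleton (naturality, reduction to the universal case, splitting into line bundles, then a binary base case) parallels the paper's, but the base-case identity you propose to prove is false, and this is a genuine gap rather than a fixable detail. You assert $W(\omega_1\dirsum\omega_2) = \clo\clt\,t^2 + (\clo+\clt)\,t + 1$, i.e.\ $W(L) = e(L)t+1$ for a line bundle $L$. But $c_{\omega-2k}(\alpha)$ lives in grading $\omega-2k \in RO(\Pi B)$, which for a line bundle and $k=1$ is $\omega-2\neq 0$; the bottom Waner class of a line bundle is the invertible class $\zeta^{\omega-2}$, not $1$ (this is the same phenomenon as $c_{\omega-4}=\zeta^{\omega-4}$ in the $BU(2)$ computation at the end of \S\ref{sec:waner}). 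Likewise $\clo+\clt$ is not even a homogeneous element: $\grad\clo = 2+\Omega_{10}+\Omega_{11}$ while $\grad\clt = 2+\Omega_{01}+\Omega_{11}$, so the two terms cannot be added. The correct statement is $W(\omega_1\dirsum\omega_2) = (\zeta^{\omega_1-2}+\clo t)(\zeta^{\omega_2-2}+\clt t)$, with middle coefficient $\zeta^{\omega_2-2}\clo+\zeta^{\omega_1-2}\clt = s^*(\zeta_2^2\cl)$ and constant term $\zeta^{\omega_1+\omega_2-4}$. Any Gysin-sequence computation must land on these classes; aiming at $\clo+\clt$ and $1$ means the computation cannot close up.

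Two further points. First, the paper sidesteps your ``main obstacle'' (the iterated Gysin computation) entirely: it writes down the $n$-fold product formula $W(\omega)=\prod_i(\zeta^{\omega_i-2}+c_{\omega_i}t)$ over $BT^n$ and verifies it by applying the two detection maps $\rho$ and $(-)^\GG$, which jointly detect these classes because, by Proposition~\ref{prop:kunneth}, no basis elements lie below the relevant gradings; under $\rho$ one recovers the classical Whitney formula, and on each fixed-set component $\omega_i^\GG$ is either tautological or zero, so the fixed-point check is a classical total Chern class identity. Second, reducing to the binary case of \emph{two line bundles} does not suffice for the induction on rank --- you would need the case of a line bundle against a bundle of arbitrary rank --- which is another reason the paper proves the full product formula over $BT^n$ in one step. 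Your caveat about Remark~\ref{rem:general splitting principle} being only a sketch is fair, but it is a limitation shared with the paper's own argument.
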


\begin{proof}
We use Remark~\ref{rem:general splitting principle} as a general splitting principle
and Proposition~\ref{prop:kunneth} for computations.
Consider an $n$-dimensional complex vector bundle $\alpha$.
The classes $c_{\omega-2k}(\alpha)$
are pullbacks of the classes $c_{\omega(n)-2k}$ in the cohomology of $BU(n)$,
so we may assume we are in the universal case $\alpha = \omega(n) = \omega$ over $BU(n)$.
The classes $c_{\omega-2k}$ lie in even gradings, so lie in groups that can be identified with
symmetric elements in the cohomology of $BT^n$.
Further, their gradings lie in $SRO(\Pi BT^n)$.
So let us work entirely in the cohomology of $BT^n$ and write
$\omega = \omega_1\dirsum\cdots\dirsum\omega_n$.

Now we claim that, in the cohomology of $BT^n$, we have
\begin{equation}\label{eqn:roots}
    W(\omega) = \prod_{i=1}^n(\zeta^{\omega_i-2} + c_{\omega_i}t),
\end{equation}
from which the multiplicativity of the proposition follows in general.
Using Proposition~\ref{prop:kunneth}, we can see that the elements $c_{\omega-2k}$
lie in gradings in which no basis elements lie below them,
hence, when expressed as linear combinations of basis elements, they will have coefficients
that can be detected by looking at $\rho$, the restriction to nonequivariant cohomology,
and $(-)^\GG$, taking fixed sets.
Thus, we can verify (\ref{eqn:roots}) by checking that it is true on applying $\rho$ and
on applying $(-)^\GG$.

On applying $\rho$, equation (\ref{eqn:roots}) restricts to the nonequivariant Whitney sum formula,
which we know to be true, so it remains to check that (\ref{eqn:roots})
is true on taking fixed points.
Write
\[
    (BT^n)^\GG = \Disjunion_{I\in (\Z/2)^n}(BT^n)^I
\]
where $(BT^n)^I \homeo BT^n$ is the component whose $i$th factor is $\Xp\infty$ if $I_i=0$ and $\Xq\infty$ if $I_i=1$.
On taking fixed sets, the $I$th component of (\ref{eqn:roots}) is
\[
    c(\omega^\GG|(BT^n)^I) = \prod_{I_i = 0}(1 + x_i t),
\]
which is true
because $\omega_i^\GG|(BT^n)^I$ is induced from the tautological bundle over $BT^1$ if $I_i = 0$ 
and the zero bundle if $I_i = 1$.
Thus, equation (\ref{eqn:roots}) is true.
\end{proof}

We can use this to calculate the Waner classes in the cohomology of $BU(2)$.

\begin{proposition}
In the cohomology of $BU(2)$, we have
\begin{align*}
    c_{\omega-2} &= \zeta_2^2 \cl \\
\intertext{and}
    c_{\omega-4} &= \zeta_1\zeta_2^2 = \zeta^{\omega-4}.
\end{align*}
\end{proposition}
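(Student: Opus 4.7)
The plan is to exploit the injectivity of $s^*$ in even gradings from Proposition~\ref{prop:symmetric}, reducing the identification of $c_{\omega-2}$ and $c_{\omega-4}$ to computations in $H_\GG^\gr(BT^2_+)$, where the Whitney sum formula from Proposition~\ref{prop:Whitney sum} applies to $s^*\omega = \omega_1\dirsum\omega_2$. Specifically, pulling back to $BT^2$ and using the factorization~(\ref{eqn:roots}) from the proof of Proposition~\ref{prop:Whitney sum} on each line-bundle factor, one gets
\[
    s^*W(\omega) \,=\, (\zeta_{10}\zeta_{11} + \clo\, t)(\zeta_{01}\zeta_{11} + \clt\, t),
\]
and expansion yields $s^*c_{\omega-4} = \zeta_{01}\zeta_{10}\zeta_{11}^2$ together with $s^*c_{\omega-2} = \zeta_{11}(\zeta_{01}\clo + \zeta_{10}\clt)$.

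For $c_{\omega-4}$ I would check directly from the values of $s^*\zeta_i$ recorded earlier in the section that $s^*(\zeta_1\zeta_2^2) = \zeta_{01}\zeta_{10}\zeta_{11}^2$; since both sides lie in an even grading, Proposition~\ref{prop:symmetric} gives $c_{\omega-4} = \zeta_1\zeta_2^2$. The second equality $\zeta_1\zeta_2^2 = \zeta^{\omega-4}$ is then a grading computation: from the decomposition of $\omega$ over the fixed components $B^0, B^1, B^2$ of $BU(2)^\GG$ one reads off $\omega - 4 = \Omega_1 + 2\Omega_2$ in $RO(\Pi BU(2))$, so the $\zeta$-exponent notation agrees with the monomial on the left.

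For $c_{\omega-2}$ the goal is to show $s^*c_{\omega-2} = s^*(\zeta_2^2\cl) = \zeta_{11}^2\cltens$. Starting from the relation $\zeta_{11}\cltens = \zeta_{01}\clo + \zeta_{10}\clt - e^{-2}\kappa\,\zeta_{00}\zeta_{01}\zeta_{10}\clo\clt$ from Theorem~\ref{thm:main}, multiply through by $\zeta_{11}$; the correction term becomes $e^{-2}\kappa \cdot \xi \cdot \clo\clt$, which vanishes because $e^{-2}\kappa\cdot\xi = 0$ in $\HS$. This yields $\zeta_{11}^2\cltens = \zeta_{11}(\zeta_{01}\clo + \zeta_{10}\clt)$, matching $s^*c_{\omega-2}$, and another application of injectivity of $s^*$ in even gradings gives $c_{\omega-2} = \zeta_2^2\cl$.

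The main point requiring care is keeping the gradings straight, in particular the precise value of $\grad\omega$ on $BU(2)$ and the correct form of (\ref{eqn:roots}) for each line-bundle factor; the key small miracle that makes the formula clean, despite the non-additivity of equivariant Euler classes, is the annihilation $e^{-2}\kappa\cdot\xi = 0$ that kills the correction term after multiplication by $\zeta_{11}$. Once these ingredients are assembled the verification is essentially bookkeeping.
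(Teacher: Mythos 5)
Your argument is correct and is essentially the paper's own proof: both apply the Whitney sum formula to $s^*\omega = \omega_1\dirsum\omega_2$, expand $(\zeta^{\omega_1-2}+\clo t)(\zeta^{\omega_2-2}+\clt t)$, identify the linear coefficient with $\zeta_{11}^2\cltens$ via the relation for $\zeta_{11}\cltens$ and the vanishing $e^{-2}\kappa\cdot\xi=0$, and conclude by injectivity of $s^*$ in even gradings. The only difference is cosmetic (you write $\zeta_{10}\zeta_{11}$ and $\zeta_{01}\zeta_{11}$ where the paper writes $\zeta^{\omega_1-2}$ and $\zeta^{\omega_2-2}$, and you spell out the grading check $\omega-4=\Omega_1+2\Omega_2$ that the paper leaves implicit).
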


\begin{proof}
In the cohomology of $BT^2$, the Whitney sum formula tells us that
\begin{align*}
    s^*(W(\omega))
    &= W(\omega_1\dirsum\omega_2) \\
    &= (\zeta^{\omega_1-2} + \clo t)(\zeta^{\omega_2-2} + \clt t) \\
    &= \zeta^{\omega_1+\omega_2-4} + (\zeta^{\omega_2-2}\clo + \zeta^{\omega_1-2}\clt)t + \clo\clt t^2 \\
    &= s^*(\zeta^{\omega-4}) + s^*(\zeta_2^2\cl)t + s^*(\cw)t^2
\end{align*}
For the middle term, the computation is
\begin{align*}
    s^*(\zeta_2^2\cl) &= \zeta_{11}^2\cltens \\
        &= \zeta_{11}(\zeta_{01}\clo + \zeta_{10}\clt - e^{-2}\kappa \zeta_{00}\zeta_{01}\zeta_{10}\clo\clt) \\
        &= \zeta^{\omega_2-2}\clo + \zeta^{\omega_1-2}\clt.
\end{align*}
(The third term vanishes because $e^{-2}\kappa\cdot\xi = 0$.)
Since we are in gradings in which $s^*$ is injective, 
this gives
\[
    W(\omega) = \zeta^{\omega-4} + \zeta_2^2\cl t + \cw t^2,
\]
verifying the claims of the proposition.
\end{proof}

Thus, the Waner classes $c_{\omega-2}$ and $c_{\omega-4}$ are basis elements, but not multiplicative generators,
hence not as fundamental as, for example, the class $\cl$.

\section*{Appendix}

We give here the verification that we can resolve all ambiguities in the reduction system
used to prove Proposition~\ref{prop:freeness}. 
For each pair of reductions, with left sides $W_1$ and $W_2$, it suffices to look at
the least common multiple of $W_1$ and $W_2$, which can be reduced in two different ways using
the two reductions.
We need to show that a series of further reductions can be
applied to bring the resulting polynomials to the same place.
One simplification we can make: If $W_1$ and $W_2$ have greatest common divisor 1, hence least common multiple $W_1 W_2$,
then the verification is trivial: Applying one reduction and then the other in either order leads
to the same polynomial. Thus, we can take those pairings as verified. We now list all the pairings
together with the resolutions of their ambiguities.
The numbering of the reductions is as in the proof of Proposition~\ref{prop:freeness}.

\begin{itemize}
\item \ref{red:1} and \ref{red:2}: The greatest common divisor is 1.

\item \ref{red:1} and \ref{red:3}: The least common multiple is $\zeta_{00}\zeta_{01}\zeta_{11}\cxlt$.
\begin{align*}
	\zeta_{00}\zeta_{01}\zeta_{11}\cxlt
		&\xmapsto{\ref{red:1}} \zeta^{\chi\omega_1-2}\zeta_{11}\cxlt \\
	\zeta_{00}\zeta_{01}\zeta_{11}\cxlt
		&\xmapsto{\ref{red:3}} (1-\kappa)\zeta_{00}^2\zeta_{10}\clt + e^2\zeta_{00} \\
		&\xmapsto{\ref{red:4}} (1-\kappa)\zeta^{\chi\omega_1-2}\zeta_{10}\cxltens \\
		&\qquad - (1 - \epsilon_1)\cxlo\zeta_{00}\zeta_{10}\zeta_{11}
			+ e^2\zeta_{00} \\
		&\xmapsto{\ref{red:11}} (1-\kappa)\zeta^{\chi\omega_1-2}\zeta_{00}\clo \\
		&\qquad + (1-\kappa)(1-\epsilon_1)\zeta^{\chi\omega_1-2}\zeta_{11}\clt \\
		&\qquad - (1 - \epsilon_1)\cxlo\zeta_{00}\zeta_{10}\zeta_{11}
			+ e^2\zeta_{00} \\
		&\xmapsto{\ref{red:2}} (1-\kappa)\zeta^{\chi\omega_1-2}\zeta_{00}\clo \\
		&\qquad + (1-\kappa)(1-\epsilon_1)\zeta^{\chi\omega_1-2}\zeta_{11}\clt \\
		&\qquad - (1 - \epsilon_1)\zeta^{\omega_1-2}\cxlo\zeta_{00}
			+ e^2\zeta_{00} \\
		&= (1-\kappa)\zeta^{\chi\omega_1-2}\zeta_{00}\clo + \zeta^{\chi\omega_1-2}\zeta_{11}\clt \\
		&\qquad - \zeta^{\omega_1-2}\cxlo\zeta_{00} + e^2\zeta_{00} \\
		&= \zeta^{\chi\omega_1-2}\zeta_{11}\clt
\end{align*}
The last two equalities come from equalities in $ H_\GG^{RO(\Pi U_1)}((U_1)_+)$.

\item \ref{red:1} and \ref{red:4}
\begin{align*}
	\zeta_{00}^2\zeta_{01}\clt
		&\xmapsto{\ref{red:1}} \zeta^{\chi\omega_1-2}\zeta_{00}\clt \\
	\zeta_{00}^2\zeta_{01}\clt
		&\xmapsto{\ref{red:4}} \zeta^{\chi\omega_1-2}\zeta_{01}\cxltens \\
		&\qquad - (1 - \kappa)(1 - \epsilon_1)\cxlo\zeta_{00}\zeta_{01}\zeta_{11} \\
		&\xmapsto{\ref{red:1}} \zeta^{\chi\omega_1-2}\zeta_{01}\cxltens \\
		&\qquad - (1 - \kappa)(1 - \epsilon_1)\zeta^{\chi\omega_1-2}\cxlo\zeta_{11} \\
		&= \zeta^{\chi\omega_1-2}\zeta_{01}\cxltens - \zeta^{\chi\omega_1-2}\cxlo\zeta_{11} \\
		&\xmapsto{\ref{red:10}} (1-\kappa)(1-\epsilon_1)\zeta^{\chi\omega_1-2}\zeta_{00}\clt \\
		&= \zeta^{\chi\omega_1-2}\zeta_{00}\clt
\end{align*}

\item \ref{red:1} and \ref{red:5}: The greatest common divisor is 1.

\item \ref{red:1} and \ref{red:6}: Similar to \ref{red:1} and \ref{red:4}.

\item \ref{red:1} and \ref{red:7}: The greatest common divisor is 1.

\item \ref{red:1} and \ref{red:8}
\begin{align*}
	\zeta_{00}\zeta_{01}\cltens
		&\xmapsto{\ref{red:1}} \zeta^{\chi\omega_1-2} \cltens \\
	\zeta_{00}\zeta_{01}\cltens
		&\xmapsto{\ref{red:8}} \cxlo\zeta_{01}\zeta_{10} \\
		&\qquad	+ (1-\kappa)(1-\epsilon_1)\zeta_{01}^2 \cxlt \\
		&\xmapsto{\ref{red:6}} (1-\kappa)(1-\epsilon_1)\zeta^{\chi\omega_1-2}\cltens \\
		&= \zeta^{\chi\omega_1-2}\cltens
\end{align*}

\item \ref{red:1} and \ref{red:9}: The greatest common divisor is 1.

\item \ref{red:1} and \ref{red:10}: Similar to \ref{red:1} and \ref{red:8}.

\item \ref{red:1} and \ref{red:11}: The greatest common divisor is 1.

\item \ref{red:1} and \ref{red:12}: The greatest common divisor is 1.

\item \ref{red:2} and \ref{red:3}
\begin{align*}
	\zeta_{01}\zeta_{10}\zeta_{11} \cxlt
		&\xmapsto{\ref{red:2}} \zeta^{\omega_1-2}\zeta_{01} \cxlt \\
	\zeta_{01}\zeta_{10}\zeta_{11} \cxlt
		&\xmapsto{\ref{red:3}} (1-\kappa)\zeta_{00}\zeta_{10}^2\clt + e^2\zeta_{10} \\
		&\xmapsto{\ref{red:5}} (1-\kappa)\zeta^{\omega_1-2}\zeta_{00}\cltens \\
		&\qquad - (1-\kappa)(1 - \epsilon_1)\clo\zeta_{00}\zeta_{01}\zeta_{10} \\
		&\qquad + e^2\zeta_{10} \\
		&\xmapsto{\ref{red:1}} (1-\kappa)\zeta^{\omega_1-2}\zeta_{00}\cltens \\
		&\qquad - (1-\kappa)(1 - \epsilon_1)\zeta^{\chi\omega_1-2}\clo\zeta_{10} \\
		&\qquad + e^2\zeta_{10} \\
		&= (1-\kappa)\zeta^{\omega_1-2}\zeta_{00}\cltens \\
		&\qquad - \zeta^{\chi\omega_1-2} \clo\zeta_{10} + e^2\zeta_{10} \\
		&\xmapsto{\ref{red:8}} (1-\kappa)\zeta^{\omega_1-2}\cxlo\zeta_{10} \\
		&\qquad + (1-\epsilon_1)\zeta^{\omega_1-2}\zeta_{01}\cxlt \\
		&\qquad - \zeta^{\chi\omega_1-2} \clo\zeta_{10} + e^2\zeta_{10} \\
		&= \zeta^{\omega_1-2}\zeta_{01}\cxlt
\end{align*}

\item \ref{red:2} and \ref{red:4}: The greatest common divisor is 1.

\item \ref{red:2} and \ref{red:5}: Similar to \ref{red:1} and \ref{red:4}.

\item \ref{red:2} and \ref{red:6}: The greatest common divisor is 1.

\item \ref{red:2} and \ref{red:7}: Similar to \ref{red:1} and \ref{red:4}.

\item \ref{red:2} and \ref{red:8}: The greatest common divisor is 1.

\item \ref{red:2} and \ref{red:9}: Similar to \ref{red:1} and \ref{red:8}.

\item \ref{red:2} and \ref{red:10}: The greatest common divisor is 1.

\item \ref{red:2} and \ref{red:11}: Similar to \ref{red:1} and \ref{red:8}.

\item \ref{red:2} and \ref{red:12}: The greatest common divisor is 1.

\item \ref{red:3} and \ref{red:4}: The greatest common divisor is 1.

\item \ref{red:3} and \ref{red:5}: The greatest common divisor is 1.

\item \ref{red:3} and \ref{red:6}
\begin{align*}
	\zeta_{01}^2\zeta_{11}\cxlt
		&\xmapsto{\ref{red:3}} (1-\kappa)\zeta_{00}\zeta_{01}\zeta_{10}\clt + e^2\zeta_{01} \\
		&\xmapsto{\ref{red:1}} (1-\kappa)\zeta^{\chi\omega_1-2}\zeta_{10}\clt + e^2\zeta_{01} \\
	\zeta_{01}^2\zeta_{11}\cxlt
		&\xmapsto{\ref{red:6}} \zeta^{\chi\omega_1-2}\zeta_{11}\cltens \\
		&\qquad - (1 - \kappa)(1 - \epsilon_1)\cxlo\zeta_{01}\zeta_{10}\zeta_{11} \\
		&\xmapsto{\ref{red:2}} \zeta^{\chi\omega_1-2}\zeta_{11}\cltens \\
		&\qquad - (1 - \kappa)(1 - \epsilon_1)\zeta^{\omega_1-2}\cxlo\zeta_{01} \\
		&= \zeta^{\chi\omega_1-2}\zeta_{11}\cltens - (1 - \kappa)\zeta^{\omega_1-2}\cxlo\zeta_{01} \\
		&\xmapsto{\ref{red:9}} \zeta^{\chi\omega_1-2}\clo \zeta_{01}
				+ (1-\epsilon_1)\zeta^{\chi\omega_1-2}\zeta_{10}\clt \\
		&\qquad - (1 - \kappa)\zeta^{\omega_1-2}\cxlo\zeta_{01} \\
		&= (1-\kappa)\zeta^{\chi\omega_1-2}\zeta_{10}\clt + e^2\zeta_{01}
\end{align*}

\item \ref{red:3} and \ref{red:7}: Similar to \ref{red:3} and \ref{red:6}.

\item \ref{red:3} and \ref{red:8}: The greatest common divisor is 1.

\item \ref{red:3} and \ref{red:9}
\begin{align*}
	\zeta_{01}\zeta_{11}\cxlt\cltens
		&\xmapsto{\ref{red:3}} (1-\kappa)\zeta_{00}\zeta_{10}\clt\cltens 
			+ e^2 \cltens \\
		&\xmapsto{\ref{red:8}} (1-\kappa)\cxlo\zeta_{10}^2 \clt \\
		&\qquad + (1-\epsilon_1)\zeta_{01}\zeta_{10} \clt\cxlt \\
		&\qquad + e^2 \cltens \\
		&\xmapsto{\ref{red:5}} (1-\kappa)\zeta^{\omega_1-2}\cxlo\cltens \\
		&\qquad - (1-\kappa)(1 - \epsilon_1)\clo\cxlo\zeta_{01}\zeta_{10} \\
		&\qquad + (1-\epsilon_1)\zeta_{01}\zeta_{10} \clt\cxlt \\
		&\qquad + e^2 \cltens \\
		&= \zeta^{\chi\omega_1-2}\clo\cltens \\
		&\qquad - (1-\kappa)(1 - \epsilon_1)\clo\cxlo\zeta_{01}\zeta_{10} \\
		&\qquad + (1-\epsilon_1)\zeta_{01}\zeta_{10} \clt\cxlt \\
	\zeta_{01}\zeta_{11}\cxlt\cltens
		&\xmapsto{\ref{red:9}} \clo\zeta_{01}^2 \cxlt \\
		&\qquad + (1-\epsilon_1)\zeta_{01}\zeta_{10}\clt\cxlt \\
		&\xmapsto{\ref{red:6}} \zeta^{\chi\omega_1-2}\clo\cltens \\
		&\qquad - (1 - \kappa)(1 - \epsilon_1)\clo\cxlo\zeta_{01}\zeta_{10} \\
		&\qquad + (1-\epsilon_1)\zeta_{01}\zeta_{10}\clt\cxlt \\
\end{align*}

\item \ref{red:3} and \ref{red:10}: Similar to \ref{red:3} and \ref{red:9}.

\item \ref{red:3} and \ref{red:11}: The greatest common divisor is 1.

\item \ref{red:3} and \ref{red:12}: The greatest common divisor is 1.

\item \ref{red:4} and \ref{red:5}
\begin{align*}
	\zeta_{00}^2\zeta_{10}^2 \clt
		&\xmapsto{\ref{red:4}} \zeta^{\chi\omega_1-2}\zeta_{10}^2\cxltens \\
		&\qquad - (1 - \kappa)(1 - \epsilon_1)\cxlo\zeta_{00}\zeta_{10}^2\zeta_{11} \\
		&\xmapsto{\ref{red:2}} \zeta^{\chi\omega_1-2}\zeta_{10}^2\cxltens \\
		&\qquad - (1 - \kappa)(1 - \epsilon_1)\zeta^{\omega_1-2}\cxlo\zeta_{00}\zeta_{10} \\
		&\xmapsto{\ref{red:11}} \zeta^{\chi\omega_1-2}\clo\zeta_{00}\zeta_{10}  \\
		&\qquad + (1-\epsilon_1)\zeta^{\chi\omega_1-2}\zeta_{10}\zeta_{11}\cxlt \\
		&\qquad - (1 - \kappa)(1 - \epsilon_1)\zeta^{\omega_1-2}\cxlo\zeta_{00}\zeta_{10} \\
		&= \zeta^{\chi\omega_1-2}\clo\zeta_{00}\zeta_{10}  \\
		&\qquad + (1-\kappa)\zeta^{\chi\omega_1-2}\zeta_{10}\zeta_{11}\cxlt \\
		&\qquad - (1 - \kappa)\zeta^{\omega_1-2}\cxlo\zeta_{00}\zeta_{10} \\
		&= (1-\kappa)\zeta^{\chi\omega_1-2}\zeta_{10}\zeta_{11}\cxlt + e^2\zeta_{00}\zeta_{10} \\
		&\xmapsto{\ref{red:2}} (1-\kappa)\zeta^{\chi\omega_1-2}\zeta^{\omega_1-2}\cxlt + e^2\zeta_{00}\zeta_{10} \\
		&= \xi \cxlt + e^2\zeta_{00}\zeta_{10} \\
	\zeta_{00}^2\zeta_{10}^2 \clt
		&\xmapsto{\ref{red:5}} \zeta^{\omega_1-2}\zeta_{00}^2\cltens \\
		&\qquad - (1 - \epsilon_1)\clo\zeta_{00}^2\zeta_{01}\zeta_{10} \\
		&\xmapsto{\ref{red:1}} \zeta^{\omega_1-2}\zeta_{00}^2\cltens \\
		&\qquad - (1 - \epsilon_1)\zeta^{\chi\omega_1-2}\clo\zeta_{00}\zeta_{10} \\
		&\xmapsto{\ref{red:8}} \zeta^{\omega_1-2}\cxlo \zeta_{00}\zeta_{10} \\
		&\qquad + (1-\kappa)(1-\epsilon_1)\zeta^{\omega_1-2}\zeta_{00}\zeta_{01}\cxlt \\
		&\qquad - (1 - \epsilon_1)\zeta^{\chi\omega_1-2}\clo\zeta_{00}\zeta_{10} \\
		&= \zeta^{\omega_1-2}\cxlo \zeta_{00}\zeta_{10} \\
		&\qquad + (1-\kappa)\zeta^{\omega_1-2}\zeta_{00}\zeta_{01}\cxlt \\
		&\qquad - (1 - \kappa)\zeta^{\chi\omega_1-2}\clo\zeta_{00}\zeta_{10} \\
		&= (1-\kappa)\zeta^{\omega_1-2}\zeta_{00}\zeta_{01}\cxlt + e^2\zeta_{00}\zeta_{10} \\
		&\xmapsto{\ref{red:1}} (1-\kappa)\zeta^{\omega_1-2}\zeta^{\chi\omega_1-2}\cxlt + e^2\zeta_{00}\zeta_{10} \\
		&= \xi \cxlt + e^2\zeta_{00}\zeta_{10}
\end{align*}

\item \ref{red:4} and \ref{red:6}: The greatest common divisor is 1.

\item \ref{red:4} and \ref{red:7}: The greatest common divisor is 1.

\item \ref{red:4} and \ref{red:8}
\begin{align*}
	\zeta_{00}^2 \clt\cltens
		&\xmapsto{\ref{red:4}} \zeta^{\chi\omega_1-2}\cltens\cxltens \\
		&\qquad - (1 - \kappa)(1 - \epsilon_1)\cxlo\zeta_{00}\zeta_{11}\cltens \\
		&\xmapsto{\ref{red:12}} \zeta^{\chi\omega_1-2}\clo\cxlo + \zeta^{\chi\omega_1-2}\clt\cxlt \\
		&\qquad + \tau(\iota^{-2})\zeta^{2(\chi\omega_1-2)}\clo\zeta_{00}\zeta_{10}\clt \\
		&\qquad - (1 - \kappa)(1 - \epsilon_1)\cxlo\zeta_{00}\zeta_{11}\cltens \\
		&\xmapsto{\ref{red:9}} \zeta^{\chi\omega_1-2}\clo\cxlo + \zeta^{\chi\omega_1-2}\clt\cxlt \\
		&\qquad + \tau(\iota^{-2})\zeta^{2(\chi\omega_1-2)}\clo\zeta_{00}\zeta_{10}\clt \\
		&\qquad - (1 - \kappa)(1 - \epsilon_1)\clo\cxlo\zeta_{00}\zeta_{01} \\
		&\qquad - (1-\kappa)\cxlo\zeta_{00}\zeta_{10}\clt \\
		&\xmapsto{\ref{red:1}} \zeta^{\chi\omega_1-2}\clo\cxlo + \zeta^{\chi\omega_1-2}\clt\cxlt \\
		&\qquad + \tau(\iota^{-2})\zeta^{2(\chi\omega_1-2)}\clo\zeta_{00}\zeta_{10}\clt \\
		&\qquad - (1 - \kappa)(1 - \epsilon_1)\zeta^{\chi\omega_1-2}\clo\cxlo \\
		&\qquad - (1-\kappa)\cxlo\zeta_{00}\zeta_{10}\clt \\
		&= \zeta^{\chi\omega_1-2}\clt\cxlt 
			+ \cxlo\zeta_{00}\zeta_{10}\clt \\
	\zeta_{00}^2 \clt\cltens
		&\xmapsto{\ref{red:8}} \cxlo\zeta_{00}\zeta_{10}\clt \\
		&\qquad + (1-\kappa)(1-\epsilon_1)\zeta_{00}\zeta_{01}\clt\cxlt \\
		&\xmapsto{\ref{red:1}}  \cxlo\zeta_{00}\zeta_{10}\clt \\
		&\qquad + (1-\kappa)(1-\epsilon_1)\zeta^{\chi\omega_1-2}\clt\cxlt \\
		&= \cxlo\zeta_{00}\zeta_{10}\clt + \zeta^{\chi\omega_1-2}\clt\cxlt
\end{align*}

\item \ref{red:4} and \ref{red:9}: The greatest common divisor is 1.

\item \ref{red:4} and \ref{red:10}: The greatest common divisor is 1.

\item \ref{red:4} and \ref{red:11}: The greatest common divisor is 1.

\item \ref{red:4} and \ref{red:12}: The greatest common divisor is 1.

\item \ref{red:5} and \ref{red:6}: The greatest common divisor is 1.

\item \ref{red:5} and \ref{red:7}: The greatest common divisor is 1.

\item \ref{red:5} and \ref{red:8}: The greatest common divisor is 1.

\item \ref{red:5} and \ref{red:9}: The greatest common divisor is 1.

\item \ref{red:5} and \ref{red:10}: The greatest common divisor is 1.

\item \ref{red:5} and \ref{red:11}: Similar to \ref{red:4} and \ref{red:8}.

\item \ref{red:5} and \ref{red:12}: The greatest common divisor is 1.

\item \ref{red:6} and \ref{red:7}: Similar to \ref{red:4} and \ref{red:5}.

\item \ref{red:6} and \ref{red:8}: The greatest common divisor is 1.

\item \ref{red:6} and \ref{red:9}: The greatest common divisor is 1.

\item \ref{red:6} and \ref{red:10}: Similar to \ref{red:4} and \ref{red:8}.

\item \ref{red:6} and \ref{red:11}: The greatest common divisor is 1.

\item \ref{red:6} and \ref{red:12}: The greatest common divisor is 1.

\item \ref{red:7} and \ref{red:8}: The greatest common divisor is 1.

\item \ref{red:7} and \ref{red:9}: Similar to \ref{red:4} and \ref{red:8}.

\item \ref{red:7} and \ref{red:10}: The greatest common divisor is 1.

\item \ref{red:7} and \ref{red:11}: The greatest common divisor is 1.

\item \ref{red:7} and \ref{red:12}: The greatest common divisor is 1.

\item \ref{red:8} and \ref{red:9}
\begin{align*}
	\zeta_{00}\zeta_{11} \cltens
		&\xmapsto{\ref{red:8}} \zeta_{10}\zeta_{11}\cxlo \\
		&\qquad + (1-\kappa)(1-\epsilon_1)\zeta_{01}\zeta_{11}\cxlt \\
		&\xmapsto{\ref{red:2}} \zeta^{\omega_1-2}\cxlo \\
		&\qquad + (1-\kappa)(1-\epsilon_1)\zeta_{01}\zeta_{11}\cxlt \\
		&\xmapsto{\ref{red:3}}  \zeta^{\omega_1-2}\cxlo \\
		&\qquad + (1-\epsilon_1)\zeta_{00}\zeta_{10}\clt \\
		&\qquad - e^2 + \kappa\zeta^{\chi\omega_1-2}\clo \\
		&= \zeta^{\chi\omega_1-2}\clo \\
		&\qquad + (1-\epsilon_1)\zeta_{00}\zeta_{10}\clt \\
	\zeta_{00}\zeta_{11} \cltens
		&\xmapsto{\ref{red:9}} \zeta_{00}\zeta_{01}\clo \\
		&\qquad + (1-\epsilon_1)\zeta_{00}\zeta_{10}\clt \\
		&\xmapsto{\ref{red:1}} \zeta^{\chi\omega_1-2}\clo \\
		&\qquad + (1-\epsilon_1)\zeta_{00}\zeta_{10}\clt \\
\end{align*}

\item \ref{red:8} and \ref{red:10}: The greatest common divisor is 1.

\item \ref{red:8} and \ref{red:11}: The greatest common divisor is 1.

\item \ref{red:8} and \ref{red:12}
\begin{align*}
	\zeta_{00}\cltens\cxltens
		&\xmapsto{\ref{red:8}} \cxlo\zeta_{10}\cxltens \\
		&\qquad + (1-\kappa)(1-\epsilon_1)\zeta_{01}\cxlt\cxltens \\
		&\xmapsto{\ref{red:11}} \clo\cxlo\zeta_{00} \\
		&\qquad + (1-\epsilon_1)\cxlo\zeta_{11}\cxlt \\
		&\qquad + (1-\kappa)(1-\epsilon_1)\zeta_{01}\cxlt\cxltens \\
		&\xmapsto{\ref{red:10}} \clo\cxlo\zeta_{00} \\
		&\qquad + (2-\kappa)(1-\epsilon_1)\cxlo\zeta_{11}\cxlt \\
		&\qquad + \zeta_{00}\clt\cxlt \\
	\zeta_{00}\cltens\cxltens
		&\xmapsto{\ref{red:12}} \clo\cxlo\zeta_{00} + \zeta_{00}\clt\cxlt \\
		&\qquad + \tau(\iota^{-2})\zeta^{\chi\omega_1-2}\clo\zeta_{00}^2\zeta_{10}\clt \\
		&\xmapsto{\ref{red:4}} \clo\cxlo\zeta_{00} + \zeta_{00}\clt\cxlt \\
		&\qquad + \tau(\iota^{-2})\zeta^{2(\chi\omega_1-2)}\clo\zeta_{10}\cxltens \\
		&\qquad - \tau(\iota^{-2})\zeta^{\chi\omega_1-2}\clo\cxlo\zeta_{00}\zeta_{10}\zeta_{11} \\
		&\xmapsto{\ref{red:2}} \clo\cxlo\zeta_{00} + \zeta_{00}\clt\cxlt \\
		&\qquad + \tau(\iota^{-2})\zeta^{2(\chi\omega_1-2)}\clo\zeta_{10}\cxltens \\
		&\qquad - \tau(\iota^{-2})\zeta^{\chi\omega_1-2}\zeta^{\omega_1-2}\clo\cxlo\zeta_{00} \\
		&= -(1-\kappa)\clo\cxlo\zeta_{00} + \zeta_{00}\clt\cxlt \\
		&\qquad + (2-\kappa)\cxlo\zeta_{10}\cxltens \\
		&\xmapsto{\ref{red:11}} \clo\cxlo\zeta_{00} + \zeta_{00}\clt\cxlt \\
		&\qquad + (2-\kappa)(1-\epsilon_1)\cxlo\zeta_{11}\cxlt \\
\end{align*}

\item \ref{red:9} and \ref{red:10}: The greatest common divisor is 1.

\item \ref{red:9} and \ref{red:11}: The greatest common divisor is 1.

\item \ref{red:9} and \ref{red:12}: Similar to \ref{red:8} and \ref{red:12}.

\item \ref{red:10} and \ref{red:11}: Similar to \ref{red:8} and \ref{red:9}.

\item \ref{red:10} and \ref{red:12}: Similar to \ref{red:8} and \ref{red:12}.

\item \ref{red:11} and \ref{red:12}: Similar to \ref{red:8} and \ref{red:12}.

\end{itemize}

\bibliographystyle{amsplain}
\bibliography{Bibliography}

\end{document}